\newtheorem{thm}{Theorem}[section]
\newtheorem{cor}[thm]{Corollary}
\newtheorem{lem}[thm]{Lemma}
\newtheorem{prop}[thm]{Proposition}
\newtheorem{qn}[thm]{Question}
\newtheorem*{thmA}{Theorem A}
\newtheorem*{thmB}{Theorem B}
\newtheorem*{thmD}{Theorem D}
\newtheorem*{thmC}{Theorem C}
\newtheorem*{thmE}{Theorem E}
\newtheorem*{thmF}{Theorem F}
\theoremstyle{definition}
\newtheorem{defn}[thm]{Definition}
\newtheorem*{rem*}{Remark}
\newtheorem*{rems*}{Remarks}
\newtheorem{ex}[thm]{Example}
\newtheorem*{ex*}{Example}
\numberwithin{equation}{section}
\definecolor{OrangeRed}{cmyk}{0,0.6,1,0}            
\definecolor{DarkBlue}{cmyk}{1,1,0,0.20}
\definecolor{DarkGreen}{cmyk}{1,0,0.6,0.2}
\definecolor{myblue}{rgb}{0.66,0.78,1.00}
\definecolor{Violet}{cmyk}{0.79,0.88,0,0}
\definecolor{Lavender}{cmyk}{0,0.48,0,0}
\definecolor{purpleheart}{rgb}{0.41, 0.21, 0.61}
\definecolor{brick}{cmyk}{0,0.8,0.3,0.5}
\newcommand{\dist}{\operatorname{dist}}
\newcommand{\C}{{\mathbb C}}
\newcommand{\D}{{\mathbb D}}
\newcommand{\N}{{\mathbb N}}
\newcommand{\ra}{\rightarrow}
\newcommand{\lran}{\underset{n\to\infty}{\longrightarrow}}
\newcommand{\ov}{\overline}
\renewcommand{\epsilon}{\varepsilon}
\renewcommand{\phi}{\varphi}
\newcommand{\leucl}{{\ell_{\operatorname{Eucl}}}}
\newcommand{\Rad}{\operatorname{Rad}}
\newcommand{\AP}{\operatorname{AP}}
\title{Boundary dynamics for holomorphic sequences, non-autonomous dynamical systems and wandering domains}
\author[1]{\hspace{1.7cm}Anna Miriam Benini \thanks{Partially supported by Gruppo Nazionale per l'Analisi Matematica, la Probabilit\`a e le loro Applicazioni (GNAMPA, INdAM) and by  PRIN 2017, Real and Complex Manifolds: Topology, Geometry and holomorphic dynamics.} \thanks{Partially supported by MSRI, Berkeley.}}
\author[4]{Vasiliki Evdoridou \textsuperscript{\textdagger} \thanks{ Supported by the EPSRC grant EP/R010560/1.}}
\author[2,3]{N\'uria Fagella \textsuperscript{\textdagger} \thanks{Partially supported by the Spanish grant MTM2017-86795-C3-3-P and  PID2020-118281GB-C32, and the Catalan grants 2017SGR1374 and ICREA Academia 2020. }}
\author[4]{\hspace{2cm} Philip J. Rippon\textsuperscript{ \textdaggerdbl }}
\author[4]{Gwyneth M. Stallard\textsuperscript{ \textdaggerdbl }}
\affil[1]{\small Dep. of Mathematical, Physical and Computer Sciences, Universit\`a di Parma, Italy.}
\affil[2]{\small Dep. de Matem\`atiques i Inform\`atica, Universitat de Barcelona, Catalonia, Spain.}
\affil[3]{\small Centre de Recerca Matemàtica, Bellaterra, Catalonia, Spain.}
\affil[4]{\small School of Mathematics and Statistics, The Open University, Milton Keynes, UK.}
\date{\today}
\begin{document}
\maketitle

\begin{abstract}

There are many classical results, related to the Denjoy--Wolff Theorem, concerning the relationship between orbits of interior points and orbits of boundary points under iterates of holomorphic self-maps of the unit disc. Here, for the first time, we address such questions in the very general setting of sequences $(F_n)$ of holomorphic maps between simply connected domains. We show that, while some classical results can be generalised, with an interesting dependence on the geometry of the domains, a much richer variety of behaviours is possible. Some of our results are new even in the classical setting.

Our methods apply in particular to non-autonomous dynamical systems, when $(F_n)$ are forward compositions of holomorphic maps, and to the study of wandering domains in holomorphic dynamics.

The proofs use techniques from geometric function theory, measure theory and ergodic theory, and the construction of examples involves a `weak independence' version of the second Borel--Cantelli lemma and the concept from ergodic theory of `shrinking targets'.

\end{abstract}

\section{Introduction}\label{intro}
We consider sequences of holomorphic maps $F_n: U \to U_n, n\in\N,$ between proper, simply connected domains, and prove results concerning the relationship between sequences $(F_n(z))$, where~$z$ is an interior point of $U$ and $(F_n(\zeta))$, where $\zeta$ lies in the {\it boundary} of $U$. In general, the maps $F_n$ are not defined on the boundary of~$U$ but we shall introduce a `radial extension' of $F_n$ which in many cases exists at almost all boundary points, with respect to harmonic measure -- in this situation we say that $F_n$ has a `full radial extension' to $\partial U$; see Section~\ref{prelims} for the details. For simplicity, we use the notation $F_n$ to refer also to the extension of our original map at these boundary points.

In this general setting, we describe the sequence of images $(F_n(z)$), where $z\in U$ or $z\in \partial U$, as an {\it orbit}. There are several natural questions about such orbits: does the behaviour of interior orbits determine the behaviour of boundary orbits (and vice-versa)? If all interior orbits approach the boundary, then do they approach the orbit of a particular boundary point and, if so, how large is the set of boundary points with similar limiting behaviour?

An important case of our general setting is a non-autonomous dynamical system in which $F_n=f_n\circ \cdots\circ f_1$ is a {\it forward composition} of holomorphic maps $f_n: U_{n-1} \to U_n$, $n\in\N$, where $U_0=U$. Systems of forward compositions of holomorphic maps have been studied extensively in recent years in a variety of contexts; see, for example, \cite{FS,Comerford,Sester,Rempe-Urb,ZL}.

A particularly well-studied special case of our general setting is when $U$ is a simply connected {\it wandering domain} of an entire or meromorphic map $f$ and $F_n=f^n$ is the $n$-th iterate of $f$; see, for example, \cite{bfjk,RS-boundaries, bishopwd, marshi, oscillating,BEFRS,MRW,EGP} for many results on such wandering domains. In fact, obtaining a better understanding of the orbits of boundary points of wandering domains was the original motivation for this work.

Another important special case is the following result, where $F_n$ are the iterates~$f^n$ of a holomorphic self-map~$f$ of the unit disc $\D$; see \cite[page~79]{Carleson-Gamelin}, and also \cite{Abate-Chr} for many references to this classical result.

\begin{thm}[Denjoy--Wolff Theorem]\label{thm:DW}
Let $f: \D \to \D$ be holomorphic. Provided $f$ is not a rotation about a point in $\D$, there exists a unique point $p\in\overline{\D}$ (the Denjoy--Wolff point) such that $f^n(z)\to p$ as $n\to\infty$ for all $z\in \D$.
\end{thm}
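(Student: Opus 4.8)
The plan is to distinguish the case where $f$ has a fixed point in $\D$ from the case where it does not, using throughout the Schwarz--Pick inequality for the hyperbolic distance $\dD$ on $\D$ together with its rigidity: if $\dD(f(z_1),f(z_2))=\dD(z_1,z_2)$ for some $z_1\ne z_2$, then $f$ is an automorphism of $\D$. Suppose first that $f$ fixes some $a\in\D$. Conjugating by the M\"obius involution that interchanges $0$ and $a$ and invoking the Schwarz lemma shows that, since $f$ is not a rotation about $a$, it is not an automorphism of $\D$. By Schwarz--Pick the sequence $\bigl(\dD(f^n(z),a)\bigr)_n$ is non-increasing, so the orbit of any $z\in\D$ stays in a fixed compact subset of $\D$, and $\dD(f^n(z),a)\to L\ge 0$. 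If $f^{n_k}(z)\to q$, then $q\in\D$, and passing to the limit (using continuity of $f$ and of $\dD$ on $\D$) gives $\dD(f(q),a)=\dD(q,a)=L$, i.e.\ $\dD(f(q),f(a))=\dD(q,a)$; by rigidity, $q=a$. Hence every subsequential limit of the orbit is $a$, so $f^n(z)\to a=:p$ for all $z\in\D$.

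Now suppose $f$ has no fixed point in $\D$. If $f$ is an automorphism then, lacking an interior fixed point, it is parabolic or hyperbolic; conjugating to a half-plane it becomes a translation or a dilation $w\mapsto\lambda w$ with $\lambda>1$, and in either case $f^n(z)$ converges to the attracting boundary fixed point $p$ for every $z\in\D$. So assume $f$ is not an automorphism. The first task is to produce the Denjoy--Wolff point: for $r\in(0,1)$ the map $rf$ carries the compact convex set $\{|w|\le r\}$ continuously into itself, hence has a fixed point $p_r\in\{|w|<r\}$ by Brouwer's theorem; along a sequence $r_n\uparrow1$, any subsequential limit $p$ of $(p_{r_n})$ lies on $\partial\D$, for a limit in $\D$ would be a fixed point of $f$. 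A standard rearrangement of the Schwarz--Pick inequality $\dD(r_nf(z),p_{r_n})\le\dD(z,p_{r_n})$, followed by letting $n\to\infty$, yields Wolff's inequality
\[
\frac{|p-f(z)|^2}{1-|f(z)|^2}\ \le\ \frac{|p-z|^2}{1-|z|^2},\qquad z\in\D,
\]
which says precisely that $f$ maps every horodisc $E(p,c)=\{z\in\D:\,|p-z|^2<c(1-|z|^2)\}$ — a Euclidean disc internally tangent to $\partial\D$ at $p$ — into itself.

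To conclude, fix $z_0\in\D$ and choose $c$ so large that $z_0\in E(p,c)$; then the whole orbit $\bigl(f^n(z_0)\bigr)_n$ stays inside the compact set $\overline{E(p,c)}$, and $\overline{E(p,c)}\cap\partial\D=\{p\}$. Exactly as before, $\bigl(\dD(f^{n+1}(z_0),f^n(z_0))\bigr)_n$ is non-increasing with limit $L$; if a subsequence $f^{n_k}(z_0)$ converges to some $q\in\D$, then $\dD(f^2(q),f(q))=\dD(f(q),q)=L$, and rigidity applied to the pair $(q,f(q))$ forces $q=f(q)$ (since $f$ is not an automorphism), contradicting the absence of interior fixed points. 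Hence no subsequential limit of the orbit lies in $\D$, so all of them lie in $\overline{E(p,c)}\cap\partial\D=\{p\}$, and $f^n(z_0)\to p$. Since $p$ was produced independently of $z_0$, and a single orbit already pins down its limit, uniqueness is immediate.

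I expect the case of an interior fixed point to be routine. The substance lies in the no-fixed-point case, where two steps should require care: first, identifying the boundary point $p$ and extracting the horodisc-invariance (Wolff) inequality from the $rf$-approximation via a limiting Schwarz--Pick argument; and second — the step I anticipate to be the main obstacle — upgrading ``the orbit is trapped in a horodisc'' to ``the orbit converges to $p$'', for which the decisive idea is to combine the monotonicity of consecutive hyperbolic distances with the rigidity of Schwarz--Pick in order to exclude interior subsequential limits. The automorphism subcase, where that mechanism degenerates, has to be treated separately and explicitly.
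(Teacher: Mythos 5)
The paper states the Denjoy--Wolff Theorem without proof, citing it as classical (to Carleson--Gamelin, p.~79), so there is no internal argument to compare against. Your proof is correct and follows the standard modern route: the Schwarz--Pick contraction together with its rigidity (equality forces an automorphism) handles the interior--fixed--point case; for the fixed--point--free case you produce the boundary point $p$ via Brouwer applied to the approximants $rf$, derive Wolff's horodisc--invariance inequality by a limiting Schwarz--Pick argument, and then exclude interior subsequential limits by applying rigidity to the non-increasing sequence $\dD(f^{n+1}(z_0),f^n(z_0))$. Splitting off the automorphism subcase explicitly is exactly right, since there the rigidity step degenerates and the conclusion instead follows from the elementary classification of parabolic and hyperbolic disc automorphisms. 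The only point worth stating more explicitly is that the subsequential limit $p$ of $(p_{r_n})$ is a priori only one accumulation point, but once a single orbit is shown to converge to it, uniqueness of $p$ follows --- you note this, and it is correct.
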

In this unit disc setting, there is a remarkable dichotomy concerning orbits of boundary points, which originates in the work of Neuwirth \cite{Neuwirth}, Aaronson \cite{aaronson}, and Doering and Ma\~n\'e \cite{doering-mane}. It concerns the iterates of an {\it inner function} $f$; that is, a holomorphic self-map of the unit disc $\D$ whose radial extension maps $\partial\D$ to $\partial\D$, apart from a set of measure zero. Our statement is a modification of the original  dichotomy given in \cite{aaronson} and \cite{doering-mane}, more suited to our present purposes.

\begin{thm}[ADM dichotomy]\label{thm:DM}
Let $f: \D \to \D$ be an inner function with a Denjoy--Wolff point $p\in\overline \D$.
\begin{enumerate}
\item[\rm(a)]If \;$\sum_{n\geq 0}(1-|f^n(z)|)<\infty$ for some $z\in\D$,
then $p\in\partial \D$ and $\lim_{n\to \infty}f^n(\zeta)=p$ for almost every $\zeta \in \partial \D$.

\item[\rm(b)] If \;$\sum_{n\geq 0}(1-|f^n(z)|)=\infty$ for some $z\in\D$, then the iterates $f^n(\zeta), n\in\N,$ are dense in $\partial \D$ for almost every $\zeta \in \partial \D$.
\end{enumerate}
\end{thm}
Thus in this setting if interior orbits tend towards the boundary sufficiently quickly, then almost all boundary orbits tend to the Denjoy--Wolff point, whereas almost no boundary orbits do this if interior orbits tend towards the boundary more slowly. This dichotomy is our main inspiration for obtaining results in the general setting of holomorphic sequences and non-autonomous dynamical systems, with many applications, including to boundary orbits of simply connected wandering domains.

Theorem~\ref{thm:DM}, part~(a) is in \cite[Section 3]{aaronson} and \cite[Theorem~4.1]{doering-mane}, and it was generalised to iterates of holomorphic self-maps of $\mathbb{D}$ in \cite[Theorem~4.2]{BMS}. The proof of Theorem~\ref{thm:DM}, part~(b) uses various results from ergodic theory; see Section~\ref{ADMproof}.

Before discussing generalisations of Theorem~\ref{thm:DM}, we point out that in our general setting orbits of interior points need not have common limiting behaviour; see Example~\ref{Mercer}. However, if the orbit of one interior point converges to the boundary (in any manner), with respect to the Euclidean distance, $\operatorname{dist}(.,.)$, then the orbits of all interior points converge to the boundary and, moreover, they all have a common limiting behaviour.


\begin{thmA}\label{DWthm}
Let $F_n: U \to U_n, n\in\N,$ be a sequence of holomorphic maps between simply connected domains. If there exists $z_0 \in U$ such that
\[
\dist(F_n(z_0), \partial U_n) \to 0 \;\text{ as } n \to \infty,
\]
then, for all $z \in U$,
\[
|F_n(z) - F_n(z_0)| \to 0 \;\text{ as } n \to \infty.
\]
\end{thmA}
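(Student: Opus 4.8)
The plan is to exploit the fact that holomorphic maps contract the hyperbolic metric, together with a comparison between the hyperbolic metric of a simply connected domain and the Euclidean distance to its boundary. Write $\rho_V$ for the density of the hyperbolic metric on a simply connected domain $V$, and $d_V$ for the associated hyperbolic distance. By the Schwarz--Pick lemma, each $F_n : U \to U_n$ satisfies $d_{U_n}(F_n(w), F_n(w')) \le d_U(w, w')$ for all $w, w' \in U$. In particular, for a fixed $z \in U$ the quantity $M := d_U(z, z_0)$ is a finite constant independent of $n$, and
\[
d_{U_n}(F_n(z), F_n(z_0)) \le M \quad \text{for all } n \in \N.
\]
So $F_n(z)$ stays within a bounded hyperbolic ball (in $U_n$) around $F_n(z_0)$. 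The task is to convert this bounded hyperbolic displacement into the statement that the Euclidean displacement $|F_n(z) - F_n(z_0)|$ tends to $0$, using the hypothesis that $\dist(F_n(z_0), \partial U_n) \to 0$.

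The key geometric input is the standard estimate for simply connected domains coming from the Koebe distortion theorem: for any simply connected $V \subsetneq \C$ and any $w \in V$,
\[
\frac{1}{2\,\dist(w, \partial V)} \le \rho_V(w) \le \frac{2}{\dist(w, \partial V)}.
\]
The lower bound is what I need: it says the hyperbolic metric near the boundary is large compared to $1/\dist(\cdot,\partial V)$. First I would fix $z \in U$, set $d_n := \dist(F_n(z_0), \partial U_n) \to 0$, and let $\gamma_n$ be a hyperbolic geodesic in $U_n$ from $F_n(z_0)$ to $F_n(z)$, which has hyperbolic length at most $M$. Along $\gamma_n$, as long as the curve stays in the Euclidean ball $B(F_n(z_0), r)$, every point is within Euclidean distance $d_n + r$ of $\partial U_n$, hence $\rho_{U_n} \ge \tfrac12 (d_n + r)^{-1}$ there. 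If $|F_n(z) - F_n(z_0)|$ were to exceed some $r_0 > 0$ along a subsequence, then $\gamma_n$ would have to travel Euclidean distance at least $r_0$ while staying (at least on an initial portion) in $B(F_n(z_0), r_0)$, contributing hyperbolic length at least $\tfrac12 r_0 (d_n + r_0)^{-1}$; more carefully, integrating the lower bound along the portion of $\gamma_n$ inside $B(F_n(z_0), r_0)$ gives a hyperbolic length bounded below by a positive quantity that, crucially, blows up as $d_n \to 0$. Since this must be $\le M$, we get a contradiction for $n$ large, and hence $|F_n(z) - F_n(z_0)| \to 0$.

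The step I expect to require the most care is making the geometric estimate in the previous paragraph rigorous: one must track the geodesic $\gamma_n$ and argue that a definite amount of its Euclidean travel takes place in a region where the Euclidean distance to $\partial U_n$ is comparably small to $d_n$. The clean way to do this is to bound $|F_n(z) - F_n(z_0)|$ directly in terms of $d_n$ and $M$: integrating $\rho_{U_n} \ge \tfrac12 \dist(\cdot, \partial U_n)^{-1} \ge \tfrac12 (d_n + |\zeta - F_n(z_0)|)^{-1}$ along any path $\zeta = \gamma_n(t)$ from $F_n(z_0)$ to $F_n(z)$ yields
\[
M \ge d_{U_n}(F_n(z_0), F_n(z)) \ge \tfrac12 \log\!\left( 1 + \frac{|F_n(z) - F_n(z_0)|}{d_n} \right),
\]
where the last inequality comes from bounding the line integral of $\tfrac12(d_n + s)^{-1}\,ds$ from $s=0$ to $s = |F_n(z) - F_n(z_0)|$ from below using that the straight-line distance is at most the path length. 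Rearranging gives $|F_n(z) - F_n(z_0)| \le d_n (e^{2M} - 1)$, and since $d_n \to 0$ and $M$ is a fixed constant depending only on $z$ (and $z_0$), the conclusion follows. The only subtlety is justifying that the line integral of the radial lower bound controls the geodesic integral even though $\gamma_n$ need not be radial; this is handled because $\dist(\gamma_n(t), \partial U_n) \le d_n + |\gamma_n(t) - F_n(z_0)|$ pointwise by the triangle inequality, so the lower bound for $\rho_{U_n}$ along $\gamma_n$ dominates the one-dimensional model and the substitution $s(t) = |\gamma_n(t) - F_n(z_0)|$ is $1$-Lipschitz.
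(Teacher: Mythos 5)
Your proof is correct and follows the same overall strategy as the paper: reduce to hyperbolic geometry via the Schwarz--Pick lemma, then convert the uniform hyperbolic bound $d_{U_n}(F_n(z),F_n(z_0))\le M$ into Euclidean control using the standard density lower bound $\rho_V(w)\ge \tfrac12\,\dist(w,\partial V)^{-1}$ for simply connected $V$. The one place your execution differs from the paper's (Theorem~\ref{thm:A+}) is in how this lower bound is applied along the geodesic $\gamma_n$. The paper first invokes Lemma~\ref{lem:hyp density} to obtain a \emph{uniform} lower bound $\rho_{U_n}(w)\ge e^{-2d}/(2\delta_n)$ on the hyperbolic disc $\Delta_n$ of radius $d$ about $F_n(z_0)$, and then simply multiplies by the Euclidean length of $\gamma_n$ to get $|F_n(z)-F_n(z_0)|\le 2de^{2d}\delta_n$. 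You instead use the triangle inequality $\dist(\gamma_n(t),\partial U_n)\le d_n+|\gamma_n(t)-F_n(z_0)|$ to obtain a \emph{pointwise} lower bound, and then integrate it via the total-variation argument applied to $t\mapsto \tfrac12\log(d_n+|\gamma_n(t)-F_n(z_0)|)$, landing on $M\ge \tfrac12\log(1+L/d_n)$, i.e.\ $L\le d_n(e^{2M}-1)$. Your route avoids Lemma~\ref{lem:hyp density} entirely and gives a marginally sharper constant (since $e^{2M}-1\le 2Me^{2M}$), at the cost of a slightly more careful integration step; it does not give anything new in terms of the asymptotics, so the choice is a matter of taste rather than substance. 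One small thing worth being explicit about in a write-up is that the inequality $|s'(t)|\le|\gamma_n'(t)|$ (with $s(t)=|\gamma_n(t)-F_n(z_0)|$) holds only almost everywhere, and that the final lower bound uses that the total variation of an absolutely continuous function dominates the jump of its endpoint values; both points are standard but your phrase ``the substitution $s(t)$ is $1$-Lipschitz'' slightly understates what is being used.
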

%

In Section~\ref{sec:boundaryclassification}, we classify all the possible types of behaviour of interior orbits in relation to convergence to the boundary, and we prove Theorem~A.

In view of Theorem~A, we can make the following definition.

\begin{defn}\label{DWset}
Let $F_n: U \to U_n, n\in\N,$ be a sequence of holomorphic maps between simply connected domains, for which interior orbits converge to the boundary. The {\it Denjoy--Wolff set} of $(F_n)$ consists of those $\zeta \in \partial U$ such that, for all $z \in U$,
\[
|F_n(\zeta) - F_n(z)| \to 0\; \text{ as } n \to \infty.
\]
\end{defn}
\begin{rems*}
1.\; In our general setting the orbits of points $\zeta\in \partial U$ need not lie on the boundaries of the sets $U_n$; they need only satisfy $F_n(\zeta)\in \ov{U_n}$, for $n\in\N$.

2.\; In the case of iterates of a holomorphic self-map $f$ of the unit disc, if the Denjoy--Wolff point~$p$ of~$f$ is on $\partial \D$, then~$f$ is defined at~$p$ via its radial limit, and~$p$ is a fixed point of~$f$ which belongs to the Denjoy--Wolff set. Theorem~\ref{thm:DM} shows that in this situation the Denjoy--Wolff set has either full measure or measure zero.
\end{rems*}

All our results about the boundary orbits of a sequence of holomorphic maps between simply connected domains are stated in terms of the size of the Denjoy--Wolff set.

In Section~\ref{sec:fast},  we show that part~(a) of Theorem~\ref{thm:DM} does indeed have an analogue in our general setting. Interestingly, the result that we obtain depends on the geometry of the boundaries of the domains~$(U_n)$. In the case that all the domains are the unit disc, we obtain a direct generalisation of the result for iterates of inner functions.

\begin{thmB} \label{thm:intro 2}
Let $F_n, n\in \N,$  be a sequence of holomorphic self-maps of $\mathbb{D}$ and suppose that there exists $z_0 \in \mathbb{D}$ such that
\begin{equation}\label{ThmB:ineq}
\sum_{n=0}^{\infty} \left(1-|F_n(z_0)|\right) <\infty.
\end{equation}
Then almost all points in $\partial \D$ belong to the Denjoy--Wolff set of $(F_n)$.
\end{thmB}

In the setting of general simply connected domains~$U$ and $U_n$ instead of the unit disc, an analogue of the condition in Theorem~B would be that

\begin{equation} \label{eq:nopower}
\sum_{n=0}^{\infty} \operatorname{dist}(F_n(z_0),\partial U_n)<\infty.
\end{equation}

However, the less smooth the boundaries of the domains $U_n$ are, the stronger the hypothesis we need. Without any smoothness assumptions, we can prove the following.

\begin{thmC}
Let $F_n:U \to U_n, n\in\N,$ be a sequence of holomorphic maps between simply connected domains, each with a full radial extension to $\partial U$, and suppose  that there exists $z_0 \in U$ such that
\begin{equation} \label{eq:power}
\sum_{n=0}^{\infty} \operatorname{dist}(F_n(z_0),\partial U_n)^{1/2}<\infty.
\end{equation}
Then almost all points in  $\partial U$ belong to the Denjoy--Wolff set of~$(F_n)$.
\end{thmC}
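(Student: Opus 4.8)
The plan is to reduce the general case to the unit-disc case handled by Theorem~B, by transporting everything via Riemann maps, and to control the distortion this introduces using the Koebe distortion/one-quarter theorems together with a classical fact from geometric function theory: the Löwner-type estimate that relates harmonic measure (equivalently, arclength of prime-end sets) to Euclidean distance. Concretely, let $\phi: \D \to U$ and $\phi_n: \D \to U_n$ be Riemann maps, normalised so that $\phi(0)=z_0$ and $\phi_n(0)=F_n(z_0)$. Set $G_n = \phi_n^{-1}\circ F_n \circ \phi: \D \to \D$, a holomorphic self-map of $\D$ with $G_n(0)=0$. The first step is to verify that, under the standing assumption that each $F_n$ has a full radial extension to $\partial U$, the maps $G_n$ have full radial extensions to $\partial\D$ matching the boundary behaviour of $F_n$ (via the boundary correspondence of the Riemann maps, which exists a.e.\ with respect to harmonic measure). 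This makes the Denjoy--Wolff set of $(F_n)$ correspond, under $\phi$, to the Denjoy--Wolff set of $(G_n)$ up to a harmonic-measure-null set.

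The second step is the quantitative heart: I would show that \eqref{eq:power} implies $\sum_n (1-|G_n(0)|) < \infty$, so that Theorem~B applies to $(G_n)$. Since $G_n(0)=0$ we instead need to track how close $G_n$ comes to the boundary in a way that survives pulling back by $\phi_n$; the natural surrogate is to pick an auxiliary interior point, say compare orbits of a fixed $w\in\D$. More cleanly: by the Schwarz lemma / Julia's lemma applied to $G_n$, control of $1-|G_n(w)|$ for a single $w$ is equivalent to control of the image of a horocycle, and by the Koebe distortion theorem applied to $\phi_n$ near $0$, $1 - |\phi_n^{-1}(\zeta)|$ is comparable, up to an absolute constant, to $\dist(\zeta,\partial U_n)/\dist(F_n(z_0),\partial U_n)$ for points $\zeta$ with $\dist(\zeta,\partial U_n)$ not much smaller than $\dist(F_n(z_0),\partial U_n)$ — but this comparison degenerates, and that is exactly where the square root enters. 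The key inequality I expect to need is the following geometric-function-theory estimate: if $\psi:\D\to\Omega$ is a conformal map then, for a subset $E\subset\partial\D$, the harmonic measure $\omega(\psi(0),\psi(E),\Omega)$ (i.e.\ $|E|/2\pi$) is bounded by a universal constant times $\big(\diam(\psi(E))/\dist(\psi(0),\partial\Omega)\big)^{1/2}$ — a Beurling-type projection/extremal-length estimate. Applying this with $\psi=\phi_n$ and $E$ the (small) set of directions where the orbit of $F_n(\zeta)$ is far from $F_n(z_0)$ converts the $\ell^1$ condition on $\dist(F_n(z_0),\partial U_n)^{1/2}$ into summable harmonic measures.

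The third step is then a Borel--Cantelli argument in $\partial\D$: for a suitable sequence $\varepsilon_n\to 0$ chosen so that $\sum_n \dist(F_n(z_0),\partial U_n)^{1/2}/\varepsilon_n^{?}$ still converges (one balances the exponents), the Beurling estimate gives $\sum_n \omega\big(z_0, \{\zeta: |F_n(\zeta)-F_n(z_0)|>\varepsilon_n\},U\big) < \infty$, hence for a.e.\ $\zeta\in\partial U$ one has $|F_n(\zeta)-F_n(z_0)|\le \varepsilon_n$ for all large $n$; since $\varepsilon_n\to 0$ and, by Theorem~A, $|F_n(z)-F_n(z_0)|\to0$ for every $z\in U$, this puts a.e.\ $\zeta$ in the Denjoy--Wolff set. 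The main obstacle, and the place requiring the most care, is the second step: making the Beurling/extremal-length estimate precise with the correct power $1/2$ and applying it to the {\em non-univalent} map $F_n$ rather than to the Riemann map directly — one must either factor $F_n = \phi_n\circ G_n$ and push the harmonic-measure estimate through the inner self-map $G_n$ of $\D$ (using that $G_n$ does not increase harmonic measure of radial sets since $G_n(0)=0$, by subordination), or argue directly with the conformal map $\phi_n$ on the image side. I would pursue the factorisation route, so that the only genuinely geometric input is the Beurling estimate for the single conformal map $\phi_n$, with the self-map $G_n$ handled by the monotonicity of harmonic measure under subordination.
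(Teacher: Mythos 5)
Your plan correctly identifies the three main tools that the paper's proof actually uses --- L\"owner's lemma (in the form of harmonic-measure expansion under holomorphic maps), a Beurling/Milloux--Schmidt estimate supplying the exponent $1/2$, and a first Borel--Cantelli argument --- and it correctly locates the key step as a harmonic-measure estimate quantified by distance to the boundary. However, there are two concrete gaps in the way you propose to carry this out.

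First, the Beurling-type estimate you state, namely $\omega(\psi(0),\psi(E),\Omega)\le C\,\bigl(\operatorname{diam}\psi(E)/\operatorname{dist}(\psi(0),\partial\Omega)\bigr)^{1/2}$, is not the estimate you need, and there is a type mismatch in how you propose to apply it: the ``set of directions where the orbit of $F_n(\zeta)$ is far from $F_n(z_0)$'' lives on $\partial U$ (equivalently on the circle for the Riemann map $\phi$ of $U$), not on the circle for $\phi_n:\D\to U_n$, so ``$\psi(E)$'' with $\psi=\phi_n$ has no meaning. Even if one corrects this by taking $E=\{\eta\in\partial\D:|\phi_n^*(\eta)-F_n(z_0)|>r\}$, the set $\phi_n^*(E)$ is all of $\partial U_n$ outside a disc of radius $r$, so its diameter is not small and the inequality you wrote gives nothing. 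What is actually needed --- and what the paper uses --- is the Milloux--Schmidt bound applied in the small disc $D(\zeta_n,r)$ around the nearest boundary point $\zeta_n$ to $F_n(z_0)$: if $V_n(r)$ is the component of $U_n\cap D(\zeta_n,r)$ containing $F_n(z_0)$ and $\Gamma_n(r)=\partial V_n(r)\cap U_n$ are the circular exit arcs, then $\omega(F_n(z_0),\Gamma_n(r),V_n(r))\le \tfrac{4}{\pi}\bigl(\operatorname{dist}(F_n(z_0),\partial U_n)/r\bigr)^{1/2}$; the square root comes from the Milloux--Schmidt inequality with no smoothness assumption on $\partial U_n$, which is exactly why $\alpha=1/2$ is the generic exponent.

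Second, the subordination step through $G_n=\phi_n^{-1}\circ F_n\circ\phi$ is incomplete as described. L\"owner's lemma/subordination gives $\omega(0,(G_n^*)^{-1}(S),\D)\le\omega(0,S,\D)$ only for $S\subset\partial\D$, but radial limits $G_n^*(\xi)$ can land \emph{inside} $\D$ (this happens precisely because $F_n$ need not be an inner function with respect to $U_n$), and such $\xi$ are not captured by $(G_n^*)^{-1}(S)$ for any boundary set $S$. The paper avoids this by not passing through boundary sets at all: it applies the harmonic-measure expansion result (their Theorem~\ref{Cor:expansion}) to the restricted map $F_n:W_n(r)\to V_n(r)$, where $W_n(r)$ is the component of $F_n^{-1}(V_n(r))\cap\D$ containing the base point, and argues via the Brownian-exit/maximum-principle observation that any boundary point whose orbit escapes $\overline{S_n(r)}$ contributes harmonic mass at most $\omega(0,F_n^{-1}(\Gamma_n(r))\cap\partial W_n(r),W_n(r))$. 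Your factorisation route can be rescued, but doing so forces you to reproduce essentially this same argument with $\phi_n$-preimages; the paper's direct application of the expansion lemma to $F_n$ on $W_n(r)$ is both cleaner and, crucially, handles the non-inner case correctly. Finally, note that the paper proves a stronger Theorem~\ref{thm:general result} with a variable exponent $\alpha$ depending on boundary geometry; Theorem~C is its $\alpha=1/2$ specialisation, and the fixed-$r$/countable-intersection Borel--Cantelli formulation there avoids the exponent-balancing you mention.
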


\begin{rems*}
1.\;Condition \eqref{eq:power} in Theorem~C is sharp. Indeed, an example in Section~\ref{sec:fast} shows that the power $1/2$ in~\eqref{eq:power} cannot be increased.

2.\;A special case of Theorem~C is when~$(U_n)$ is an orbit of simply connected wandering domains of a transcendental entire or meromorphic function~$f$. In this case $F_n=f^n$ always has a continuous extension to the boundary, except perhaps at~$\infty$.
\end{rems*}

Despite this successful generalisation of the first part of the ADM dichotomy (Theorem~\ref{thm:DM}, part~(a)), the second part of the dichotomy fails in the general setting of sequences of functions, and even in the setting of non-autonomous dynamical systems; indeed, under the same assumption that interior orbits converge to the boundary slowly, very different types of behaviour can occur on the boundary itself; we illustrate this with examples in Section~\ref{sect:examples}.

In Section 5, we give an example which shows, perhaps surprisingly, that it is possible for the orbits of all interior points to converge to the boundary with the same limiting behaviour and yet the Denjoy--Wolff set is {\it empty}. (This cannot occur in the classical case since the Denjoy--Wolff point always belongs to the Denjoy--Wolff set.)

\begin{thmD}[Empty Denjoy--Wolff set]\label{thm:D-W analogue}
Let $(a_n)$ be an increasing sequence in $[0,1)$ such that $a_0=0$ and $\lim_{n \to \infty}a_n=1$, and let $(\lambda_n)$ be any sequence in $\partial \mathbb{D}$. Then, for all $z\in\D$,
\[
F_n(z)= \frac{\lambda_nz+a_n}{1+\lambda_n a_n z}\to 1\;\;\text{as }n\to \infty.
\]

If, in addition, $\sum_{n\geq 0}(1-F_n(0))=\sum_{n\geq 0}(1-a_n)=\infty,$ then there exists $(\lambda_n)$ in $\partial \D$ such that no point of $\partial \mathbb{D}$ (even the point~1) has an orbit under $F_n$ that converges to~1; in other words, the Denjoy--Wolff set of $(F_n)$ is empty.
\end{thmD}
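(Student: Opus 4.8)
The plan is to split the statement into two parts: the unconditional convergence $F_n(z)\to 1$ for interior points, and the conditional construction of $(\lambda_n)$ making the Denjoy--Wolff set empty. For the first part, I would observe that each $F_n$ is a M\"obius automorphism of $\D$ sending $0$ to $a_n$, so for fixed $z\in\D$ the point $F_n(z)$ lies in the hyperbolic ball of fixed radius $r_z=\dist_{\D}(0,z)$ around $a_n$. Since $a_n\to 1$ along the real axis, the Euclidean diameter of that hyperbolic ball shrinks to $0$ and the ball converges to the point $1$; hence $F_n(z)\to 1$. (Equivalently, one computes $1-|F_n(z)|$ directly and sees it tends to $0$, using $1-|F_n(z)|\asymp (1-a_n)\frac{1-|z|}{|1+\lambda_n a_n z|}$.) This also shows $\dist(F_n(z_0),\partial\D)\to 0$, so Theorem~A applies and all interior orbits converge together to $1$; in particular the Denjoy--Wolff set is well-defined and a boundary point $\zeta$ lies in it if and only if $F_n(\zeta)\to 1$.

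For the second part, the idea is that on the boundary $F_n$ acts as the rotation-like map $\zeta\mapsto \frac{\lambda_n\zeta+a_n}{1+\lambda_n a_n\zeta}$, and the key point is that, for $\zeta$ near $-\lambda_n$ (the preimage of a point near $-1$), $F_n$ moves $\zeta$ by a definite angular amount even though $a_n$ is close to $1$: the map $F_n$ pushes most of the circle towards $1$ but sends a small arc near $-\lambda_n$ across to the far side. Concretely, I would fix a small constant $\delta>0$ and note that the arc $A_n=\{\zeta: |\zeta+\lambda_n|<\delta'\}$ (for a suitable $\delta'$ depending on $1-a_n$) gets mapped by $F_n$ to an arc that is bounded away from $1$ — say its image meets $\{\Re w < 0\}$ — so that any orbit passing through $A_n$ at time $n$ is knocked away from $1$. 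The divergence hypothesis $\sum(1-a_n)=\infty$ is what guarantees we can choose the arcs $A_n$ to have "enough room": one shows that the set of starting points $\zeta\in\partial\D$ whose forward orbit eventually avoids all the $A_n$ is empty (or at least that $1$ itself does not avoid them), by choosing the rotation parameters $\lambda_n$ adaptively.

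The cleanest way to organize the adaptive choice is to build $(\lambda_n)$ recursively so that the single orbit of the point $1$ is forced into each bad arc infinitely often — but since the statement asks that \emph{no} boundary point converge to $1$, I would instead argue: having fixed $(a_n)$, pick $\lambda_n$ so that $F_{n-1}\circ\cdots\circ F_1(1)$... no; more carefully, since $F_n$ are not compositions here but a free sequence, the orbit of $\zeta$ is just $(F_n(\zeta))_n$, and $F_n(\zeta)\to 1$ fails as soon as $F_n(\zeta)$ is infinitely often bounded away from $1$. For each $n$, the set of $\zeta\in\partial\D$ with $\Re F_n(\zeta)\le 0$ is an arc $I_n$ of length comparable to $(1-a_n)$ (up to a constant), centered at the point $\lambda_n^{-1}\cdot(\text{something near }-1)$, and its position on the circle is controlled entirely by the free choice of $\lambda_n$. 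So the problem reduces to: given arc lengths $\ell_n\asymp (1-a_n)$ with $\sum\ell_n=\infty$, choose centers $c_n\in\partial\D$ so that \emph{every} point of $\partial\D$ lies in $I_n$ for infinitely many $n$. This is a covering statement: place the arcs greedily so that consecutive partial unions sweep around the circle, using $\sum\ell_n=\infty$ to complete infinitely many full sweeps; then every $\zeta$ is covered infinitely often, so $F_n(\zeta)\not\to 1$ for all $\zeta$, and the Denjoy--Wolff set is empty.

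The main obstacle I expect is making precise the claim that $\{\zeta:\Re F_n(\zeta)\le 0\}$ is genuinely an arc of length $\asymp (1-a_n)$ with freely prescribable center — this requires a clean computation of $F_n$ restricted to $\partial\D$ (writing $\zeta=e^{i\theta}$ and analyzing $\Re F_n(e^{i\theta})$ as a function of $\theta$, using that $F_n$ is a disc automorphism so $F_n(e^{i\theta})=e^{i\psi_n(\theta)}$ with $\psi_n$ increasing) and verifying the length estimate uniformly as $a_n\to 1$. Once that lemma is in hand, the covering argument is elementary: it is exactly a deterministic, one-sided version of the Borel--Cantelli / "divergent sum of arc lengths covers the circle infinitely often" principle, carried out by an explicit greedy placement rather than probabilistically.
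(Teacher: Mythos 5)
Your proposal is correct and takes essentially the same route as the paper: the paper also fixes a small arc $S$ around $1$, computes (via a cross-ratio estimate, their Lemma~\ref{lem:preimages under Moebius maps}) that $F_n^{-1}(S^c)$ is an arc of length $\sim c(S)(1-a_n)$ whose centre $-\lambda_n^{-1}$ is freely prescribable, and then places these arcs consecutively so that the divergence $\sum(1-a_n)=\infty$ forces the lifted intervals to cover $[0,\infty)$, hence every $\zeta\in\partial\D$ lands in $S^c$ infinitely often. The lemma you flag as the ``main obstacle'' is precisely the one the paper proves, and your greedy sweeping construction is exactly their choice $\theta_{n+1}-\theta_n=|B_n^{-1}(S^c)|$ with $-\lambda_n^{-1}=e^{i(\theta_n+\theta_{n+1})/2}$.
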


\begin{rems*}
1.\;If $\sum_{n\geq 0}(1-a_n)<\infty$ in Theorem~D, then it follows from Theorem B that almost all points in $\partial \D$ are in the Denjoy--Wolff set of $(F_n)$.

2.\;Each of the functions $F_n$ in Theorem D can in fact be written as a forward composition of M\"obius maps $f_n$ with $F_n = f_n \circ F_{n-1}$, so this is a non-autonomous example. It remains an open question as to whether this behaviour can occur for wandering domains.
\end{rems*}

So far, we have considered the effect of the behaviour of interior orbits on the behaviour of boundary orbits. In Section 6, we consider the converse problem. We show that if a sufficiently large set of boundary points have orbits with common long-term behaviour then this set must be in the Denjoy--Wolff set; that is, the orbits of all interior points must have the same limiting behaviour as the orbits of this set of boundary points. This is new even for iterates of inner functions.

\begin{thmE}\label{thm:Pom-analogue}
Let $F_n:U \to U_n, n\in\N,$ be a sequence of holomorphic maps between simply connected domains, each with a full radial extension, and suppose that there exists $\zeta_0\in \partial U$ such that $\dist(F_n(\zeta_0),\partial U_n) \to 0$ as $n \to \infty$. If
\[
L_0 := \{\zeta \in \partial U : |F_n(\zeta)- F_n(\zeta_0)| \to 0\;\; \text{as } n \to \infty\}
\]
has positive harmonic measure with respect to~$U$, then for all $z\in U$ we have
\[
|F_n(z)- F_n(\zeta_0)| \to 0\quad\text{as }n\to\infty.
\]
In particular, $L_0$ is in the Denjoy--Wolff set of $(F_n)$.
\end{thmE}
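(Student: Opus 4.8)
The plan is to transfer the problem to the unit disc via Riemann maps and reduce it to a statement about harmonic measure and radial limits. Let $\phi: \D \to U$ be a Riemann map, and set $G_n := F_n \circ \phi : \D \to U_n$. Since each $F_n$ has a full radial extension to $\partial U$, and $\phi$ has radial limits almost everywhere (indeed, a conformal homeomorphism up to a null set by Carath\'eodory-type considerations), the composition $G_n$ has a full radial extension to $\partial \D$, and harmonic measure on $\partial U$ pulls back to normalised Lebesgue measure on $\partial \D$. Let $e_0 \in \partial \D$ be a radial-limit point of $\phi$ with $\phi(e_0) = \zeta_0$ (such a point exists; the pullback of $L_0$ to a set $E \subset \partial\D$ has positive Lebesgue measure by hypothesis). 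The hypothesis then reads: $\dist(G_n(e_0), \partial U_n) \to 0$, and $E := \{ e \in \partial\D : |G_n(e) - G_n(e_0)| \to 0 \}$ has positive Lebesgue measure. We must show $|G_n(e) - G_n(w)| \to 0$ for every $e \in E$ and every $w \in \D$; pushing forward by $\phi$ gives the conclusion.

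The key step is a normal-families / harmonic-measure argument. Fix a point $w_0 \in \D$. Consider the functions $z \mapsto G_n(z) - G_n(w_0)$ on $\D$. I would like to say these are small on a large portion of $\partial\D$ (namely, on $E$, after also controlling $|G_n(e_0) - G_n(w_0)|$) and conclude they are small in the interior. The honest route: first observe that since $\dist(G_n(e_0), \partial U_n) \to 0$, the point $G_n(e_0)$ is a boundary-approaching point, so by a radial-limit version of Theorem~A (the hyperbolic estimate underlying it) applied with $\zeta_0$ as a ``boundary seed'', we get that $|G_n(w) - G_n(e_0)| \to 0$ for every interior $w$ — this is essentially Theorem~A with the interior point replaced by a boundary point whose orbit approaches the boundary, and the hyperbolic contraction argument still applies because we can take an interior point $w_n$ very close to $e_0$ with $G_n(w_n)$ close to $G_n(e_0)$. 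Granting this, $L_0$-membership of a boundary point $\zeta$ (i.e. $|F_n(\zeta) - F_n(\zeta_0)| \to 0$) combined with $|F_n(\zeta_0) - F_n(z)| \to 0$ for interior $z$ immediately yields $|F_n(\zeta) - F_n(z)| \to 0$, which is exactly the Denjoy--Wolff condition. So the real content is the boundary-to-interior transfer for $\zeta_0$ itself.

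To prove that transfer, here is the mechanism I would use. The hypothesis $\dist(F_n(\zeta_0), \partial U_n) \to 0$ plus $L_0$ having positive harmonic measure lets us run the Koebe/hyperbolic argument of \cite{BEFRS} from a genuine interior point: pick any $z_1 \in U$; we want $|F_n(z_1) - F_n(\zeta_0)| \to 0$. Using the full radial extension, for harmonic-measure-a.e. $\zeta$, $F_n(\zeta)$ is the radial limit, so $F_n(\zeta) \in \ov{U_n}$; on the positive-measure set $L_0$ these limits cluster around $F_n(\zeta_0)$. Now apply the two-constants theorem (or harmonic measure estimate) to the bounded harmonic function comparing $F_n$ on $\partial U$: since $F_n(U) \subset U_n$ has small ``diameter seen from $F_n(\zeta_0)$'' near $L_0$, and $L_0$ has definite harmonic measure, one controls $F_n(z_1)$ via an interior estimate — this requires care because $U_n$ need not be bounded, so I would localise using $\dist(F_n(\zeta_0),\partial U_n) \to 0$ together with the distortion theorem to get a uniform modulus-of-continuity-type bound. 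The main obstacle is precisely this non-autonomous, possibly-unbounded geometry: making the harmonic-measure/two-constants estimate uniform in $n$ when the target domains $U_n$ are wild. I expect this to be handled exactly as the analogous estimate in Theorem~A/\cite{BEFRS}, by working in hyperbolic metric on $U_n$ (where $\dist(F_n(\zeta_0),\partial U_n)\to 0$ gives large hyperbolic ``depth'') and invoking the contraction of the hyperbolic metric under the holomorphic maps $F_n$, so that hyperbolic-bounded sets in $U$ map into hyperbolic-small sets around $F_n(\zeta_0)$ in $U_n$, and hyperbolic-small near a boundary-approaching point forces Euclidean-small.
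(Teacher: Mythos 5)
You have correctly identified the core issue: after reducing to $U=\D$, the entire content is the ``boundary-to-interior transfer'' for $\zeta_0$ itself, since once one knows $|F_n(z)-F_n(\zeta_0)|\to 0$ for some (hence all) interior $z$, the $L_0$-condition immediately puts $L_0$ in the Denjoy--Wolff set. However, the mechanism you propose for that transfer does not close, and the proposal leaves the key estimate unproved.

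The hyperbolic-contraction route cannot work as stated. Theorem~A (via Lemma~\ref{lem:hyp density}) gives $|F_n(z)-F_n(z_0)| \le 2d e^{2d}\dist(F_n(z_0),\partial U_n)$ with $d=\dist_U(z,z_0)$, which requires \emph{finite} hyperbolic distance between the two seeds. For a boundary seed $\zeta_0$ one has $\dist_U(z,\zeta_0)=\infty$; and if you replace $\zeta_0$ by interior points $w_n\to\zeta_0$ radially, then $d_n=\dist_U(z,w_n)\to\infty$, and you have no control of the product $d_ne^{2d_n}\dist(F_n(w_n),\partial U_n)$. Moreover ``$G_n(w_n)$ close to $G_n(e_0)$'' is a statement about the radial limit \emph{for fixed $n$}: it produces a sequence $w_n$ depending on $n$ in a way that is not compatible with the uniform bound you need. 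The proposal flags the issue (``the main obstacle is precisely this non-autonomous, possibly-unbounded geometry'') and then defers it (``I expect this to be handled exactly as in Theorem~A''), but that is exactly where the actual work lies, and Theorem~A's proof does not face this obstacle.

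The missing ingredients in the paper's argument are: (i) the Severini--Egorov theorem, used to extract a compact $E_0\subset L_0$ of harmonic measure $\ge\frac12\omega(z_0,L_0,\D)$ on which $|F_n(\zeta)-F_n(\zeta_0)|\to 0$ \emph{uniformly}, giving radii $r_n\to 0$ with $F_n(E_0)\subset D(\zeta_n,r_n)$ (without this uniformity the harmonic measure estimate has no chance); (ii) the harmonic-measure expansion result Theorem~\ref{Cor:expansion} (a generalised L\"owner's lemma) applied to the restricted map $F_n:W_n\to V_n$, where $V_n$ is the component of $U_n\setminus\ov{D(\zeta_n,r_n)}$ containing $z_n=F_n(z_0)$ and $W_n$ is the component of $F_n^{-1}(V_n)$ containing $z_0$; and (iii) the Milloux--Schmidt inequality (Lemma~\ref{MS}), applied after inverting via $w\mapsto 1/(w-\zeta_n)$, to get the quantitative bound $\omega(z_n,\Gamma_n,V_n)\le\frac{4}{\pi}(r_n/|z_n-\zeta_n|)^{1/2}$. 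Combining (i)--(iii) gives $\omega(z_0,L_0,\D)\lesssim(r_n/|z_n-\zeta_n|)^{1/2}$, hence $|z_n-\zeta_n|\lesssim r_n\to 0$. Your allusion to the two-constants theorem is in the right spirit --- it is indeed a harmonic-measure interpolation argument --- but the correct tool here is Milloux--Schmidt (because the hypothesis only gives a continuum of complement hitting the ball $D(\zeta_n,r_n)$, not two-sided boundary control), and without Egorov you cannot even set up the problem. As written, the proposal recognises the structure but leaves the proof of the central inequality unwritten, so there is a genuine gap.
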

It follows that at most one such set of positive harmonic measure may exist.

For the later sections of the paper we restrict ourselves to the setting of non-autonomous dynamical systems, where each map $F_n$ is the forward composition of holomorphic maps $f_n: U_{n-1} \to U_n$ between simply connected domains. In this setting, we give a classification of possible internal dynamics in terms of the limiting behaviour of the hyperbolic distance between orbits of pairs of points, which extends our classification of wandering domains in~\cite{BEFRS}. This classification is in Section~\ref{sec:thmF}, together with an example which shows that the classification does not hold in general for {\it sequences} of holomorphic maps. One of the three possibilities in our classification is that the maps are {\it contracting}, defined as follows. Here we use the notation $\dist_U$ to denote the {\it hyperbolic distance} in a simply connected domain~$U$.

\begin{defn}\label{cont}
A sequence of maps $F_n: U \to U_n, n\in\N,$ is {\it contracting} if
\[
\dist_{U_n}(F_n(z), F_n(z'))\to  0 \;\text{ as }n\to\infty,\quad \mbox{for all } z,z'\in U.
\]
\end{defn}

For contracting  forward compositions of holomorphic maps  which preserve boundaries, we prove the strong result that the Denjoy--Wolff set has either full or zero measure.

\begin{thmF}
Let $f_n:U_{n-1} \to U_n$, $n\in\N$, be a sequence of holomorphic maps between simply connected domains, each with a full radial extension, and let $F_n: U_0 \to U_n$ be defined by
\[
F_n = f_n \circ \cdots \circ f_1,\quad\text{for } n\in\N,
\]
with the additional assumption that
\begin{equation}\label{fn-bdry-prop}
f_n(\zeta) \in \partial U_n,\;\text{ for almost all } \zeta \in \partial U_{n-1}.
\end{equation}
If $(F_n)$ is contracting and interior orbits converge to the boundary, then the Denjoy--Wolff set has either full or zero harmonic measure with respect to $U_0$.
\end{thmF}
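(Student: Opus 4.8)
\emph{Proof plan.} The plan is to conjugate the whole system to the unit disc, where the maps become inner functions, and then to run an asymptotic--independence argument showing that the indicator of the transported Denjoy--Wolff set is $m$-a.e. constant. Choose Riemann maps $\phi_n:\D\to U_n$ with $\phi_0(0)=z_0$, where $z_0$ is an interior point whose orbit converges to the boundary, and set $g_n:=\phi_n^{-1}\circ f_n\circ\phi_{n-1}$ and $G_n:=g_n\circ\cdots\circ g_1=\phi_n^{-1}\circ F_n\circ\phi_0$. Using the boundary hypothesis \eqref{fn-bdry-prop}, the full radial extensions of the $f_n$, and the fact that conformal maps transport harmonic measure to harmonic measure and preserve null sets, each $g_n$ is inner, hence so is each $G_n$ (compositions of inner functions are inner); here one invokes the radial-extension machinery of Section~\ref{prelims} to identify radial boundary values of the various compositions $m$-a.e. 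Since $\phi_n$ is a hyperbolic isometry, ``$(F_n)$ contracting'' becomes $\dist_{\D}(G_n(w),G_n(w'))\to 0$ for all $w,w'\in\D$, and the Denjoy--Wolff set $D\subseteq\partial U_0$ transports, modulo harmonic-measure-null sets, to $\hat D:=\phi_0^{-1}(D)\subseteq\partial\D$, whose normalised Lebesgue measure $m(\hat D)$ equals the harmonic measure of $D$ with respect to $U_0$. So it suffices to prove $m(\hat D)\in\{0,1\}$.

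Next I would record the tail structure of $\hat D$. Writing $F_n=H_n\circ f_1$ with $H_n:=f_n\circ\cdots\circ f_2$ and applying Theorem~A both to $(F_n)$ and to the shifted system $(H_n)$ (whose interior orbits also converge to the boundary), one checks that $\zeta\in D$ if and only if $f_1(\zeta)$ lies in the Denjoy--Wolff set of $(H_n)$; the point of Theorem~A is that it lets one verify the defining condition ``$|F_n(\cdot)-F_n(z)|\to 0$ for all $z$'' at a single reference orbit. Iterating, $D=F_n^{-1}(D^{(n)})$ for every $n$, where $D^{(n)}\subseteq\partial U_n$ is the Denjoy--Wolff set of the system shifted by $n$, and transporting via the $\phi_n$ gives $\hat D=G_n^{-1}(\hat D^{(n)})$ for all $n$, with $\hat D^{(n)}:=\phi_n^{-1}(D^{(n)})$. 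Now renormalise: let $M_n$ be a M\"obius automorphism of $\D$ with $M_n(G_n(0))=0$ and put $\tilde G_n:=M_n\circ G_n$. Then $\tilde G_n$ is inner with $\tilde G_n(0)=0$, so by L\"owner's lemma its boundary map preserves $m$; moreover $\dist_{\D}(\tilde G_n(w),0)=\dist_{\D}(G_n(w),G_n(0))\to 0$, so $\tilde G_n(w)\to 0$ for every $w\in\D$ and, by normality of bounded holomorphic functions, $\tilde G_n\to 0$ locally uniformly on $\D$. Setting $B_n:=M_n(\hat D^{(n)})$ we obtain $\hat D=\tilde G_n^{-1}(B_n)$, and since $\tilde G_n$ preserves $m$, $m(B_n)=m(\hat D)=:c$ for all $n$.

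The crux is the following asymptotic--independence step. For inner $\tilde G_n$ and bounded Borel $\psi$ on $\partial\D$ one has the push-forward identity $\mathcal H[\psi\circ\tilde G_n]=\mathcal H[\psi]\circ\tilde G_n$ (equivalently $(\tilde G_n)_*\omega_w=\omega_{\tilde G_n(w)}$ for harmonic measures), where $\mathcal H[\cdot]$ denotes the Poisson integral; hence $\int_{\partial\D}(\mathbf 1_{B_n}\circ\tilde G_n)P_w\,dm=\mathcal H[\mathbf 1_{B_n}](\tilde G_n(w))$ for every $w\in\D$. The harmonic extensions $\mathcal H[\mathbf 1_{B_n}]$ form a family of $[0,1]$-valued harmonic functions on $\D$, hence are equicontinuous on compact subsets by Harnack, and $\mathcal H[\mathbf 1_{B_n}](0)=m(B_n)=c$; since $\tilde G_n(w)\to 0$, this forces $\mathcal H[\mathbf 1_{B_n}](\tilde G_n(w))\to c$ for each fixed $w$. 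Thus $\int(\mathbf 1_{B_n}\circ\tilde G_n)P_w\,dm\to c$ for every $w\in\D$; as the sequence $(\mathbf 1_{B_n}\circ\tilde G_n)$ is bounded in $L^{\infty}(\partial\D)$, $L^1(\partial\D)$ is separable, and the Poisson integrals $\{P_w\}_{w\in\D}$ determine an $L^{\infty}$ function, every weak-$*$ limit point is the constant $c$, so $\mathbf 1_{B_n}\circ\tilde G_n$ converges weak-$*$ to $c$. But $\mathbf 1_{B_n}\circ\tilde G_n=\mathbf 1_{\tilde G_n^{-1}(B_n)}=\mathbf 1_{\hat D}$ for every $n$, so the constant sequence $\mathbf 1_{\hat D}$ converges weak-$*$ to $c$; therefore $\mathbf 1_{\hat D}=c$ $m$-a.e., which is only possible if $c\in\{0,1\}$. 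Consequently the Denjoy--Wolff set has harmonic measure $m(\hat D)=c\in\{0,1\}$ with respect to $U_0$.

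I expect the main obstacle to be the measure-theoretic bookkeeping underlying the conjugation: verifying that the maps $g_n$ (and hence $G_n$, $\tilde G_n$) are genuinely inner, that the radial boundary values of the relevant compositions agree $m$-a.e. (so that $\hat D$ really is the transported Denjoy--Wolff set), and that the L\"owner/push-forward identities used in the last step apply as stated. The dynamical input --- Theorem~A producing the tail relation $\hat D=G_n^{-1}(\hat D^{(n)})$, and the ``contracting'' hypothesis producing $\tilde G_n\to 0$ locally uniformly --- is comparatively soft; note in particular that the argument uses ``interior orbits converge to the boundary'' only to make the Denjoy--Wolff sets (of $(F_n)$ and of its shifts) well defined and to justify the use of Theorem~A, not in the final weak-$*$ estimate.
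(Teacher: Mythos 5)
Your argument is correct and structurally parallel to the paper's, which proves Theorem~F as a corollary of Theorem~\ref{thm:Pomgen}: pass to a reference orbit, conjugate by Riemann maps to produce a sequence of inner functions of $\D$ fixing~$0$ and tending to~$0$ locally uniformly (your $\tilde G_n$ plays the role the paper's $G_n=\phi^{-1}\circ h_n\circ F_n\circ\phi$ plays, the two differing only by a rotation), identify the transported Denjoy--Wolff set as a simultaneous tail set $\hat D=\tilde G_n^{-1}(B_n)$ with $m(B_n)=m(\hat D)$, and then invoke a $0$--$1$ law. The one substantive difference is that, where the paper cites Pommerenke's strong mixing theorem (Lemma~\ref{thm:pommerenke}), you re-derive the needed $0$--$1$ conclusion from scratch: using the equality case of L\"owner's lemma to write $\mathcal H[\mathbf 1_{\hat D}](w)=\mathcal H[\mathbf 1_{B_n}](\tilde G_n(w))$ and Harnack's inequality to let $n\to\infty$, giving $\mathcal H[\mathbf 1_{\hat D}]\equiv c:=m(\hat D)$ on $\D$ and hence $c\in\{0,1\}$. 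This is a clean, self-contained shortcut (indeed the weak-$*$ packaging is unnecessary, since $\mathbf 1_{B_n}\circ\tilde G_n=\mathbf 1_{\hat D}$ is a constant sequence and you already get $\mathcal H[\mathbf 1_{\hat D}](w)=c$ for every $w$ directly); what it buys is independence from Pommerenke's ergodic theorem, at the cost of not recovering the stronger mixing identity $|A\cap E|=\tfrac{1}{2\pi}|A||E|$ that the paper's lemma also yields. You are right that the remaining burden is the measure-theoretic bookkeeping: verifying $g_n$ and $G_n$ are inner, that $\hat D$, $\hat D^{(n)}$, $B_n$ are Borel, and that the radial-boundary-value identifications $D=F_n^{-1}(D^{(n)})$ hold a.e.; in the paper these are handled via Proposition~\ref{composing-radial extns}, Corollary~\ref{composing-full-radial-extns}, Theorem~\ref{thm:PommSect6.2} and Theorem~\ref{Cor:expansion}, and would need to be spelled out to make this a complete proof.
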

\begin{rems*}
1.\;In Section~\ref{prelims}, we show that the composition of functions with full radial extensions also has a full radial extension; this is the case, for example, when $U_0$ is a wandering domain of an entire function~$f$ with $F_n=f^n$ for $n\in\N$ or when all the functions $f_n$, $n\in\N$, are inner functions.

2.\;Several results in this paper, including Theorem~F, can be applied when the domains $(U_n)$ are multiply connected and we are concerned with orbits converging to the `outer boundary'; that is, the boundary of the topological hull of $U_n$. This approach is used in \cite[Theorem~1.4]{Gustavo}, where a result analogous to Theorem~A is given in the setting of dynamical behaviour near the outer boundary of multiply connected wandering domains of meromorphic functions.
\end{rems*}

In Section~\ref{sec:thmF}, we prove a general result, Theorem~\ref{thm:Pomgen}, that has Theorem F as a corollary. Our proof of Theorem~\ref{thm:Pomgen} uses an ergodic theory result of Pommerenke and generalises one aspect of it. We also give an example to show that Theorem~F does not hold in general if the forward compositions are not contracting.

In Section~\ref{sect:examples}, we give a number of examples of forward compositions of self-maps of the unit disc, related to part~(b) of the ADM-dichotomy. Whether  the interior and boundary dynamics demonstrated in these examples can  be replicated by examples of simply connected wandering domains is an open question, and will be the subject of further investigation. Our final example, Example~\ref{ex:z2}, is a contracting sequence; for this example, we are able to show that the Denjoy--Wolff set has measure zero and, in fact, that orbits of almost all boundary points are dense in $\partial \D$. This raises the question of whether an analogue of part~(b) of the ADM dichotomy may exist in the setting of contracting forward compositions. We discuss this in Section~\ref{ADMproof} and also in the final section of the paper on open questions.

In Section~\ref{sec:spherical}, we discuss briefly the extent to which our results hold if we replace the Euclidean metric as a measure of proximity by the spherical metric; the latter metric has the advantage of including those points whose orbits tend to $\infty$ but at the same time we potentially lose control of some of the more subtle behaviour amongst those orbits that do tend to $\infty$.

The proofs of Theorems A to F and of our examples use a variety of techniques from geometric function theory, harmonic analysis and measure theory including contraction of the hyperbolic metric, L\"owner's lemma, the Milloux--Schmidt inequality, the Severini--Egorov theorem, and a version of the second Borel--Cantelli lemma associated with the ergodic theory concept of `shrinking targets'.

\section{Preliminary results}\label{prelims}
In this section we record a number of properties of certain boundary extensions of holomorphic maps between simply connected domains and the relationship of these boundary extensions to the harmonic measure of sets on the boundaries of these domains. In doing so, we use a number of classical results on the boundary behaviour of holomorphic maps, which can mostly be found in~\cite{Pommerenke}.

In this section only, we denote such boundary extensions of a holomorphic map~$f$ by~$f^*$ for clarity; elsewhere in the paper we drop the * notation for simplicity.

\subsection{Boundary extensions}\label{sect:boundary maps}

Recall that our general setting is that of sequences of holomorphic maps
\[
F_n : U \longrightarrow U_n,\quad n\in\N,
\]
where $U$ and $U_n, n\in\N,$ are simply connected domains, always assumed to be proper subsets of $\C$ and not necessarily distinct, and that in some cases the functions $F_n$ are defined by forward composition as $F_n=f_n\circ \cdots \circ f_1$, where each $f_n: U_{n-1}\to U_n$ is holomorphic and $U_0=U$.

In the wandering domains context, where $F_n=f^n$ with $f$ entire, and $f: U_{n-1} \to U_{n}$,  the maps are naturally defined and continuous on the whole of $\partial U_0$ (except maybe at the point at infinity). In the inner functions context, however, where $U_n=\D$, for $n\ge 0$, each $F_n$ extends in the sense of radial limits and even nontangential limits to a full measure subset of $\partial \D$ by theorems of Fatou and Lindel\"of; see \cite[p.19]{Coll-Loh}, for example. We now consider whether such boundary extensions exist in the general setting of this paper.

We shall suppose quite generally that~$U$ and~$V$ are simply connected proper subdomains of~$\C$ and $f:U\to V$ is holomorphic. We wish to define the concept of a `radial extension' of~$f$ to boundary points of $U$, wherever this extension makes sense.

A simple case is obtained when $U=\D$, that is, $f:\D\to V$ is a holomorphic map (not necessarily conformal). If~$V$ is bounded, then the radial limit at $\zeta\in\partial\D$ exists if and only if the non-tangential limit at $\zeta$ exists and this occurs if and only the limit exists along any curve in~$\D$ ending at~$\zeta$, by the theorem of Lindel\"of mentioned above; moreover, all these limits coincide. In the general case, since the complement of~$V$ contains an unbounded continuum, the function
\[
g(z)=\sqrt{f(z)-w_0}
\]
where $w_0\notin V$ and a suitable branch of the square root is taken, is holomorphic in $\D$ and $g(\D)$ omits a disc, with centre $w_1$ say; thus the function $z\mapsto (g(z)-w_1)^{-1}$ is holomorphic and bounded in $\D$, so we see that the relationship between radial limits, nontangential limits and limits along curves remains true for such~$f$. Therefore it makes sense to define the {\em radial  extension} of such an~$f$ as the map
\[
f^*(\zeta)=\lim_{r\ra1}f(r \zeta),\;\text{ for }\zeta\in\Rad(f),
\]
where $\Rad(f)$ denotes the subset of $\partial \D$ at which~$f$ has a finite radial limit. The function $f^*$ assigns to each point of $\partial \D$ its radial limit, whenever it exists, and takes values in the closure of $f(\D)$. Note that even when $f$ has a nontangential limit at $\zeta\in\partial \D$, the function~$f$ may fail to converge to that limit along curves in $\D$ ending at $\zeta$ that are not nontangential.

Moving to the case where $f:U\to V$, with $U$ and $V$ being proper simply connected domains, we introduce a Riemann map $\phi:\D\to U$. By the discussion above,~$\varphi$ has radial (and equal nontangential) limits almost everywhere on $\partial\D$ and in fact $\partial \D\setminus \Rad(\varphi)$ has capacity zero by a theorem of Beurling \cite[Theorem~9.19]{Pommerenke}.

A point $p\in\partial U$ is called {\em accessible} from~$U$ if there exists an arc $\gamma:[0,1)\to U$ such that $\gamma(t)\to p$ as $t\to 1$. We denote the set of accessible boundary points of~$U$ by $\AP(U)$ (there seems to be no standard notation for this set). Every homotopy class of curves $\gamma\subset U$ with their endpoints fixed and one endpoint at an accessible boundary point~$p$  is called an {\em access} to~$p$ from~$U$. It follows from another theorem of Lindel\"of \cite[page~8]{Carleson-Gamelin} that there is a bijection between the set $\Rad(\varphi)$ and the set of accesses to boundary points of~$U$, induced by~$\phi$; see \cite[Correspondence Theorem]{BFJKaccesses}, for example.

In defining the concept of a radial extension of $f:U\to V$, our aim is that the extension at almost all accessible boundary points of~$U$ is consistent with the limiting behaviour of~$f$ along curves to $\partial U$ through all possible accesses to such points.
\begin{defn}[Radial extensions] \label{defn:boundary extension general}
Let $f:U\to V$ be a holomorphic map between simply connected domains, let $p\in \AP(U)$,  and let $\phi:\D\to U$ be a Riemann map. Then we say that
\begin{itemize}
\item[(a)]
a curve in~$U$ approaches~$p$ {\em radially} if it is the image of a curve in $\D$ that approaches a point of $\partial \D$ nontangentially;
\item[(b)]
$f$ has a {\em radial extension} to~$p$ with value $f^*(p)$ if for every arc $\gamma(t), t\in [0,1),$ in~$U$ such that $\gamma(t)\to p$ radially as $t\to 1$, the image curve  $f(\gamma(t))\to f^*(p)$ as $t\to 1$;
\item[(c)]
$f$ has a {\em full radial extension} (to $\partial U$) if the domain of $f^*$ includes a subset of $\AP(U)$ of full harmonic measure with respect to~$U$.
\end{itemize}
\end{defn}
We make two remarks about Definition~\ref{defn:boundary extension general}.
\begin{rems*}
1.\;We focus on the limiting behaviour of~$f$ only on curves in~$U$ that approach a boundary point~$p$ {\em radially} in order to be consistent with the definition of radial extension of a holomorphic map $f:\D\to V$ in terms of nontangential limits, given above.

2.\;The existence of a full radial extension of~$f$ implies that (up to a set of harmonic measure zero)~$f$ maps accesses to a point $p\in\partial U$ to accesses to $f^*(p)\in\partial V$, unless $f^*(p)$ belongs to~$V$, which is a priori also possible.
\end{rems*}

In general, the radial extension of a conformal map $\varphi:\D\to U$ is not an injective map, since boundary points of~$U$ can have several (possibly uncountably many) accesses from~$U$. Nevertheless $(\varphi^*)^{-1}(p)$ is always a set of measure zero, by the F. and M. Riesz theorem \cite{Duren}. Boundary points with a single access from~$U$ are called {\em uniquely accessible} and domains for which all their accessible boundary points are uniquely accessible are said to satisfy the  \emph{unique accessibility property}. In this case the radial extension  of~$\phi$ is injective (and therefore bijective).

There are some situations in which full radial extensions exist in a trivial way; in particular, this is the case when~$U$ and~$V$ are Fatou components of a holomorphic map or when~$f$ is an inner function, since $U=V=\D$ in this case.  More generally, we have the following.

\begin{prop}[Simple cases] \label{simplecases}
Let $f:U\to V$ be a holomorphic map between simply connected domains. If
\begin{enumerate}
\setlength{\itemsep}{0.05cm}
\item[\rm (a)] $f$ has a continuous extension to $\partial U$, or
\item[\rm (b)]  $U$ has the unique accessibility property,
\end{enumerate}
then $f$ admits a full radial extension $f^*$. In case (a), we have $f^*:=f|_{\partial U}$.
\end{prop}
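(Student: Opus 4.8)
The plan is to handle the two cases separately, since they rest on genuinely different mechanisms, and in both cases to reduce the statement about $f:U\to V$ to a statement about the composition $f\circ\varphi:\D\to V$ for a Riemann map $\varphi:\D\to U$, where the classical Fatou--Lindel\"of theory (already discussed in the excerpt) gives radial limits almost everywhere on $\partial\D$.

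For case (a): if $f$ extends continuously to $\partial U$, then I would argue directly from the definition of radial extension. Fix an accessible point $p\in\partial U$ reached along a radial curve $\gamma(t)\to p$; since $\gamma(t)\in U$ and $\gamma(t)\to p$, continuity of the extension gives $f(\gamma(t))\to f(p)$, and this limit is independent of the access and of the particular radial curve. Hence $f^*(p)$ exists and equals $f|_{\partial U}(p)$ for \emph{every} $p\in\AP(U)$, so in particular the domain of $f^*$ contains all of $\AP(U)$, which has full harmonic measure with respect to $U$ (equivalently, $\varphi^{-1}$ of it has full measure on $\partial\D$, since $\partial\D\setminus\Rad(\varphi)$ has capacity zero and $\Rad(\varphi)$ corresponds bijectively to accesses). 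This case is essentially bookkeeping against the definitions.

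For case (b): here I would pass to the unit disc. Let $\varphi:\D\to U$ be a Riemann map; then $g:=f\circ\varphi:\D\to V$ is holomorphic between simply connected proper domains, so by the discussion in the excerpt (composing $f$ with $\sqrt{\cdot-w_0}$ and then a M\"obius map to reduce to the bounded case, then Fatou--Lindel\"of) $g$ has radial limits, equal to nontangential limits and to limits along all curves ending at the point, at every $\zeta$ in a full-measure subset $R\subseteq\Rad(\varphi)$ of $\partial\D$. For such a $\zeta$, let $p=\varphi^*(\zeta)\in\AP(U)$ be the corresponding accessible boundary point. The unique accessibility property is exactly what lets me conclude: any radial curve in $U$ ending at $p$ is, after pulling back by $\varphi^{-1}$, a curve in $\D$ ending at the \emph{unique} preimage $\zeta$ of the access to $p$ (uniqueness of the access forces the pulled-back curve to land at $\zeta$ rather than some other point of $\partial\D$), and a radial approach to $p$ pulls back to a nontangential approach to $\zeta$ by part (a) of the definition. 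Along such a pulled-back curve, $g$ converges to its common radial limit value at $\zeta$; transporting back through $\varphi$, this shows $f(\gamma(t))$ converges to that same value, which I define to be $f^*(p)$. Since $\varphi^*$ is injective on the uniquely accessible points and $R$ has full measure, the image $\varphi^*(R)$ has full harmonic measure in $\partial U$, and $f^*$ is defined on it, so $f$ has a full radial extension.

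The main obstacle is the matching of ``access'' data in case (b): one must be careful that a radial curve to $p$ in $U$ really does pull back to a curve approaching a \emph{single} boundary point of $\D$ nontangentially, and that this point is the $\zeta$ on which $g$ is well-behaved. The unique accessibility hypothesis is precisely what removes the ambiguity (without it, different curves to $p$ could pull back to different points of $\partial\D$, or the pulled-back curve could fail to have a limit on $\partial\D$ at all); the nontangential-versus-radial compatibility is built into Definition~\ref{defn:boundary extension general}(a), and the identification of $\Rad(\varphi)$ with accesses is the Correspondence Theorem cited in the excerpt. The harmonic measure statement then follows from the F.\ and M.\ Riesz theorem (null sets for $\varphi^*$) together with Beurling's theorem ($\partial\D\setminus\Rad(\varphi)$ has capacity, hence harmonic measure, zero), both already invoked above.
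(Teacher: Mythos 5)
Your proof is correct and follows essentially the same route as the paper: case (a) is treated directly from the definitions, and in case (b) you pass to the Riemann map $\varphi:\D\to U$, use Fatou--Lindel\"of to get radial limits of $f\circ\varphi$ a.e.\ on $\partial\D$, and then invoke unique accessibility to make $\varphi^*$ injective so that $f^*$ can be defined on $\varphi^*(R)$, a set of full harmonic measure. The only cosmetic difference is that the paper also introduces a Riemann map $\varphi_V$ for $V$ and a self-map $g=\varphi_V^{-1}\circ f\circ\varphi_U$ of $\D$, but since it then composes with $\varphi_V$ again (working with $(\varphi_V\circ g)^*=(f\circ\varphi_U)^*$), this reduces to exactly your formulation.
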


It is interesting to ask whether if $f:U\to V$ is a holomorphic map between simply connected domains and~$f$ has a full radial extension to $\partial U$, then this implies that at almost all $\zeta\in\partial U$, with respect to harmonic measure, either~$f$ has a continuous extension to~$\zeta$ or the point~$\zeta$ is uniquely accessible from within $U$?
It is straightforward to construct examples where both types of boundary behaviour occur in sets of positive harmonic measure.

\begin{proof}[Proof of Proposition~\ref{simplecases}]
If $f$ is continuous in $\overline U$, then the conclusion  that~$f$ has a full radial extension is trivial.

For case (b) let $\phi_U, \phi_V$ be Riemann maps from $\D$ to $U, V$ respectively and let $g:\D\ra\D$ be defined as $g:=\phi_V^{-1}\circ f\circ \phi_U$.

If $U$ has the unique accessibility property, then $\varphi_U^*$ is injective,  so its inverse is well defined on $\phi^*(\partial \D)$, which has full harmonic measure in~$U$ because $\phi_U$ is the Riemann map. Hence we can define
\[
f^*:=(\varphi_V\circ g)^*\circ(\varphi_U^*)^{-1}
\]
on the set $E=\AP(U)\cap\varphi_U^*(\Rad(\varphi_V\circ g))$, which is of full harmonic measure in~$U$ since it is the intersection of two sets of full harmonic measure. With this definition, if $\gamma(t)$ in~$U$ converges radially to $p\in E$, then $\varphi_U^{-1}(\gamma(t))$ converges nontangentially to a point $\zeta\in \Rad(\varphi_U)\cap \Rad(\varphi_V\circ g)$, so~$f$ has a full radial extension.
\end{proof}
We also point out to what extent the radial extension property is preserved under forward composition.

\begin{prop}\label{composing-radial extns}
Let $f_1:U_0\to U_1$ and $f_2:U_1 \to U_2$ be holomorphic maps between simply connected domains, with radial extensions $f_1^*$ to $E_0\subset \AP(U_0)$ and $f_2^*$ to $E_1\subset \AP(U_1)$, respectively.

Then $f_2\circ f_1:U_0\to U_2$ has a radial extension to almost all points of $E_0 \setminus (f_1^*)^{-1}(\partial U_1\setminus E_1)$.
\end{prop}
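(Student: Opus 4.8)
The plan is to transport the statement to the unit disc via Riemann maps, treat the case in which $f_1^*$ takes an interior value by continuity of $f_2$, and in the case in which $f_1^*$ takes a boundary value compute the radial limit of $f_2\circ f_1\circ\phi_0$ by composing boundary values and then spread it to all nontangential approaches with Lindel\"of's theorem. Fix Riemann maps $\phi_j:\D\to U_j$ for $j=0,1,2$ and set $g:=\phi_1^{-1}\circ f_1\circ\phi_0$ and $h:=\phi_2^{-1}\circ f_2\circ\phi_1$, holomorphic self-maps of $\D$; thus $f_1\circ\phi_0=\phi_1\circ g$ and $f_2\circ f_1\circ\phi_0=(\phi_2\circ h)\circ g$. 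Write $S:=E_0\setminus(f_1^*)^{-1}(\partial U_1\setminus E_1)$. By the correspondence between $\Rad(\phi_j)$ and accesses recalled in Section~\ref{sect:boundary maps}, it suffices to show that for a.e.\ $\zeta\in\Rad(\phi_0)$ with $p:=\phi_0^*(\zeta)\in S$, the map $f_2\circ f_1$ has a radial extension at $p$ along the access of $\zeta$, with value $f_2^*(f_1^*(p))$ whenever this is defined; the restriction to $S$ is precisely what makes $f_2^*(f_1^*(p))$ meaningful.

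Since $\C\setminus U_1$ and $\C\setminus U_2$ each contain an unbounded continuum, the maps $f_1\circ\phi_0:\D\to U_1$, $f_2\circ\phi_1:\D\to U_2$ and $f_2\circ f_1\circ\phi_0:\D\to U_2$ all fall within the Fatou--Lindel\"of framework of Section~\ref{sect:boundary maps}: via the square-root reduction, each has a finite radial limit, equal to its nontangential limit and to its limit along any curve ending at the point in question, at a.e.\ point of $\partial\D$; and $g$, being bounded, has a radial limit $g^*(\zeta)\in\overline\D$ at a.e.\ $\zeta$. The one further input is that, because $g$ is a holomorphic self-map of $\D$, boundary values compose correctly off a null set: for a.e.\ $\zeta$ with $|g^*(\zeta)|=1$ one has $g^*(\zeta)\in\Rad(\phi_1)\cap\Rad(\phi_2\circ h)$, $(\phi_1\circ g)^*(\zeta)=\phi_1^*(g^*(\zeta))$, and $\big((\phi_2\circ h)\circ g\big)^*(\zeta)=(\phi_2\circ h)^*(g^*(\zeta))$.

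Now fix $\zeta$ in the resulting full-measure set with $p=\phi_0^*(\zeta)\in S$, and set $w:=g^*(\zeta)$. As $r\mapsto\phi_0(r\zeta)$ approaches $p$ radially and $p\in E_0$, we have $f_1(\phi_0(r\zeta))=\phi_1(g(r\zeta))\to f_1^*(p)=:q$, so either $w\in\D$ and $q=\phi_1(w)\in U_1$, or $w\in\partial\D$ and $q=\phi_1^*(w)\in\partial U_1$. In the first case $f_2$ is continuous at $q$, so every curve $\gamma$ approaching $p$ radially has $f_2(f_1(\gamma(t)))\to f_2(q)$, and $f_2\circ f_1$ has a radial extension at $p$ with value $f_2(q)=f_2(f_1^*(p))$, with no exceptional set. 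In the second case $q\in E_1$ since $p\in S$, so $f_2$ has a radial extension at $q$; as $r\mapsto\phi_1(rw)$ approaches $q$ radially in $U_1$, we get $(\phi_2\circ h)^*(w)=\lim_{r\to1}f_2(\phi_1(rw))=f_2^*(q)$, whence $(f_2\circ f_1\circ\phi_0)^*(\zeta)=(\phi_2\circ h)^*(w)=f_2^*(q)$ by the composition identity. Thus $f_2\circ f_1\circ\phi_0$ has radial limit $f_2^*(q)$ at $\zeta$, and by Lindel\"of's theorem this is its limit along every nontangential curve ending at $\zeta$; translating back, $f_2(f_1(\gamma(t)))\to f_2^*(q)$ for every curve $\gamma$ approaching $p$ through the access of $\zeta$, i.e.\ $f_2\circ f_1$ has a radial extension at $p$ (along that access) with value $f_2^*(q)=f_2^*(f_1^*(p))$.

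The main obstacle is exactly the composition identity for boundary values of the self-map $g$ on the set $\{|g^*|=1\}$: this is the only point that uses something about self-maps of $\D$ beyond the a.e.\ existence of radial limits, and it encodes both that $g$ cannot carry a positive-measure subset of $\{|g^*|=1\}$ into a null subset of $\partial\D$ (a L\"owner-type fact) and that $g$ sends radial approaches to non-tangential ones at a.e.\ such point. It is classical for $g$ inner, where it underlies composition theorems for inner functions, and carries over to general self-maps on the unimodular part; the remaining steps are routine applications of the square-root reduction, Lindel\"of's theorem, and continuity of $f_2$ at interior points. A minor technical point is to keep the whole argument phrased at the level of accesses (points of $\Rad(\phi_j)$), so that the measure-zero fibres of $\phi_0^*$ and $\phi_1^*$ cause no interference.
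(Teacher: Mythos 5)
Your proposal is correct, but it reorganises the argument rather than avoiding the paper's key tool. You transport everything to $\D$ via Riemann maps and reduce the crux to a composition identity: for a holomorphic self-map $g$ of $\D$ and a holomorphic $F$ on $\D$ with a.e.\ radial limits, $(F\circ g)^*(\zeta)=F^*(g^*(\zeta))$ for a.e.\ $\zeta$ with $|g^*(\zeta)|=1$. You correctly identify this as the main obstacle and correctly note that it bundles a L\"owner-type measure statement with something more; but the ``something more'' --- identifying the limit of $F$ along the possibly tangential curve $r\mapsto g(r\zeta)$ with the nontangential limit $F^*(g^*(\zeta))$ --- is itself most naturally proved by Bagemihl's ambiguous point theorem together with L\"owner's lemma, which is precisely what the paper's proof does directly: it compares the transplanted path $\gamma_1=f_1\circ\gamma_0$ with a genuinely radial approach $\hat\gamma_1$ to $\zeta_1$ in $U_1$, and discards the at most countably many ambiguous points. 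So you have not bypassed Bagemihl, only repackaged it inside the cited identity; the two proofs are at bottom the same argument at different levels of abstraction. Two small cautions: the composition identity in the generality you need (arbitrary holomorphic self-maps $g$, not just inner functions; and $F$ an arbitrary map with a.e.\ radial limits, not just bounded) is not a textbook staple, so you should either give a reference or supply the short Bagemihl/L\"owner proof rather than describe it as classical; and in the step where you identify $q=f_1^*(p)$ with $\phi_1^*(w)$ in the boundary case, you are already invoking the composition identity (for $F=\phi_1$), so it is worth making explicit that both that step and the later identification $\big((\phi_2\circ h)\circ g\big)^*=(\phi_2\circ h)^*\circ g^*$ rely on it. With those provisos your case analysis (interior value by continuity of $f_2$; boundary value by composing radial extensions and spreading with Lindel\"of), and your insistence on working at the level of accesses in $\Rad(\phi_j)$, are all correct and complete.
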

\begin{proof}
For  $\zeta_0\in E_0\setminus (f_1^*)^{-1}(\partial U_1\setminus E_1)$,
let $\gamma_0(t), 0\le t<1,$ be a path in~$U_0$ that converges radially to~$\zeta_0$. Then, by our assumption about $\zeta_0$, we have $\gamma_1(t):=f_1(\gamma_0(t))\to \zeta_1:=f_1^*(\zeta_0)$ as $t\to 1$; if $\zeta_1\in U_1$ we are done, otherwise  $\zeta_1\in E_1$. Note that $\gamma_1(t)$ may or may not converge radially to $\zeta_1$.

Now let $\hat\gamma_1(t), 0\le t<1,$ be a path in~$U_1$ that does converge radially to~$\zeta_1$ and is homotopic in $U_1$ to $\gamma_1(t)$. Then $f_2(\hat\gamma_1(t))\to \hat\zeta_2:=f_2^*(\zeta_1)$ as $t\to 1$. Suppose, if possible, that $f_2(\gamma_1(t))=f_2(f_1(\gamma_0(t)))\nrightarrow \hat\zeta_2$ as $t\to 1$. Since $\gamma_0(t)$ converges to $\zeta_0$ radially, we know that apart from a subset of $E_0$ of harmonic measure zero $f_2(f_1(\gamma_0(t)))$ converges to a point, $\zeta_2\in \partial U_2$ say, and we can only have $\zeta_2\neq\hat\zeta_2$ for countably many $\zeta_0\in E_0$, by Bagemihl's ambiguous point theorem \cite{Pommerenke}, applied in $U_1$. Therefore, for all $\zeta_0\in E_0\cap (f_1^*)^{-1}(E_1)$ apart from a set of harmonic measure zero, we have $f_2(\gamma_1(t))=f_2(f_1(\gamma_0(t)))\to\hat\zeta_2$, and so $f_2\circ f_1$ has a radial extension to $\zeta_0$,  as required.
\end{proof}

\subsection{Properties of harmonic measure} \label{sec:prop_harmonic}
In order to work with our definition of a radial extension, which involves the harmonic measure of boundary sets of simply connected domains, we first recall certain topological properties of boundary sets, and the behaviour of their images and pre-images under holomorphic mappings. Theorem~\ref{thm:PommSect6.2}, part~(a) is given in \cite[Proposition~6.5]{Pommerenke} and part~(b) is due to Cant\'on, Granados and Pommerenke \cite[Theorem~1]{Canton}.
\begin{thm}[Borel sets on the boundary]\label{thm:PommSect6.2}
Let $f: \D\to \hat\C$ be continuous.
\begin{itemize}
\item[(a)]
The set $\Rad(f)$ is a Borel set and, for any Borel set $B\subset \hat\C$,
\[
(f^*)^{-1}(B)=\{\zeta\in \Rad(f): f^*(\zeta)\in B\}\;\text{ is a Borel set.}
\]
\item[(b)]
Moreover, if $f$ is a conformal map and $A\subset \Rad(f)$ is a Borel set, then
\[
f^*(A)=\{f^*(\zeta): \zeta\in A\}\;\text{ is a Borel set}.
\]
\end{itemize}
\end{thm}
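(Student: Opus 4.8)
The plan is to handle the two parts separately. Part (a) should follow from a completely explicit Borel description of $\Rad(f)$ via a Cauchy criterion, together with the fact that a pointwise limit of continuous functions is Borel. Part (b) is the delicate one: a continuous (indeed Borel) image of a Borel set need not be Borel, so here the real work is to reduce to an \emph{injective} image, where the Lusin--Souslin theorem applies; the reduction rests on one genuinely analytic fact about conformal maps.

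\textbf{Part (a).} Working with the spherical metric $\dspher$ on the compact space $\hat\C$, I would first observe that $\zeta\in\partial\D$ lies in $\Rad(f)$ exactly when $r\mapsto f(r\zeta)$ is $\dspher$-Cauchy as $r\to1^-$, and that, since $r\mapsto f(r\zeta)$ is continuous on $(0,1)$ and "$\le 1/k$" is a closed condition, this is equivalent to
\[
\forall k\in\N\ \exists m\in\N\ \forall r,s\in\Q\cap(1-\tfrac1m,1):\ \dspher\bigl(f(r\zeta),f(s\zeta)\bigr)\le \tfrac1k .
\]
Each set $\{\zeta\in\partial\D:\dspher(f(r\zeta),f(s\zeta))\le 1/k\}$ is closed, being the preimage of $[0,1/k]$ under a continuous function of $\zeta$, so the displayed formula exhibits $\Rad(f)$ as a countable Boolean combination of closed sets, hence as a Borel set (in fact $F_{\sigma\delta}$). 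For the rest, on $\Rad(f)$ we have $f^*(\zeta)=\lim_{n\to\infty}g_n(\zeta)$ with $g_n(\zeta):=f\bigl((1-2^{-n})\zeta\bigr)$ continuous on $\partial\D$; a pointwise limit of continuous maps into a metric space is of Baire class $1$, hence Borel measurable. Thus $f^*$ is a Borel map defined on the Borel set $\Rad(f)$, and $(f^*)^{-1}(B)$ is Borel for every Borel $B\subset\hat\C$. (If $\Rad(f)$ is meant as the set of \emph{finite} radial limits, intersect with the Borel set $\{\zeta:f^*(\zeta)\neq\infty\}$.)

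\textbf{Part (b).} Here I would invoke the Lusin--Souslin theorem: an injective Borel image of a Borel subset of a Polish space is Borel. Since by (a) the map $f^*$ is Borel on $\Rad(f)$, it suffices to reduce to the injective case. Let $M\subset\Rad(f)$ be the set of $\zeta$ for which $f^*(\zeta)$ is also the radial limit of $f$ at some point of $\partial\D$ distinct from $\zeta$. On $\Rad(f)\setminus M$ the map $f^*$ is injective, so $f^*\bigl(A\cap(\Rad(f)\setminus M)\bigr)$ is Borel by Lusin--Souslin once $M$ is known to be Borel; and the remaining piece $f^*(A\cap M)$ is harmless provided the set $f^*(M)$ of multiply attained radial limits of the conformal map $f$ is \emph{countable}, for then $f^*(A)=f^*\bigl(A\setminus M\bigr)\cup f^*(A\cap M)$ is the union of a Borel set and a countable set, hence Borel. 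The countability is the one analytic input, and proving it is where I expect the main obstacle to lie: given $w$ attained at $\zeta_1\neq\zeta_2$, joining the two radial segments $\{r\zeta_i:\tfrac12\le r<1\}$ by a circular arc gives a crosscut $\sigma_w$ of $\D$ whose image under the conformal (hence injective) $f$ is a simple arc in $\Omega$ with both ends tending to $w$; adjoining $w$ yields a Jordan curve $J_w$ in $\hat\C$, one of whose complementary Jordan domains is the $f$-image of one side of $\sigma_w$. The difficulty is to organise this family of curves --- e.g.\ by nesting the $J_w$ and charging each $w$ to a distinct rational point of $\D$, or by reformulating in terms of prime ends and using that the family of prime ends having a prescribed point in their impression is small --- so as to force only countably many values $w$ to occur. (In the applications in this paper the reduction is typically immediate: when $f$ extends continuously to $\partial U$, or $U$ has the unique accessibility property as in Proposition~\ref{simplecases}, the non-injectivity of $f^*$ is already under control.)
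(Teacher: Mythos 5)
Your proof of part (a) is correct and essentially the standard one: the Cauchy criterion at rational radii exhibits $\Rad(f)$ as an $F_{\sigma\delta}$ set, $f^*$ is a pointwise limit of continuous functions on it and hence Borel measurable, and the conclusion follows. (The paper does not prove (a) either; it just cites Pommerenke's Proposition~2.5.)

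Part (b) has a genuine gap, and it is exactly at the step you yourself flagged as the crux. The claim that $f^*(M)$ --- the set of values attained as a radial limit of $f$ at more than one point of $\partial\D$ --- is countable is \emph{false} for a general conformal map. Take $\Omega=\D\setminus[0,1)$ and a Riemann map $f:\D\to\Omega$. Since $\partial\Omega=\partial\D\cup[0,1]$ is locally connected, $f$ extends continuously to $\overline\D$; hence $\Rad(f)=\partial\D$ and $f^*$ coincides with the Carath\'eodory extension, which is exactly two-to-one over the open slit $(0,1)$ (once from each side of the slit). Thus every $w\in(0,1)$ is a radial-limit value with two distinct preimages, so $f^*(M)\supset(0,1)$ is uncountable. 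Your decomposition $f^*(A)=f^*(A\setminus M)\cup f^*(A\cap M)$ therefore does not reduce the problem to ``injective image plus countable set'', and the Jordan-curve bookkeeping you sketch cannot repair it: in this example the crosscuts give a nested but \emph{uncountable} family of Jordan curves $J_w$, $w\in(0,1)$, and there is no way to charge each $w$ to a distinct rational of $\D$.

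Note also that the paper does not supply a proof of (b); it cites Cant\'on, Granados and Pommerenke (``Borel images and analytic functions''). Your instinct to reduce via Lusin--Souslin to countably many injective pieces is the right one, but the correct reduction is different and more delicate than ``injective part plus countable bad set'': what is actually needed is a countable Borel partition of $\Rad(f)$ on each piece of which $f^*$ is injective, and producing such a partition uses genuine structure of conformal boundary correspondence. In the slit-disk example the two arcs into which the preimages of the slit endpoints divide $\partial\D$ already give such a partition, but the general construction is considerably less elementary and does not come for free from the naive decomposition you wrote down. So: part (a) is fine; part (b), as written, is not a proof.
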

In particular, it follows from Theorem~\ref{thm:PommSect6.2} that the set of accessible points of $\partial U$ is a Borel set, since $\AP(U)$ is the image under a Riemann map of the Borel set $\Rad(f) \subset \partial \D$ (of full measure).

Note that every Borel set is a Souslin set (see \cite[page~132]{Pommerenke}) and hence universally measurable, and in particular measurable with respect to Lebesgue measure on $\partial\D$ and with respect to harmonic measure, which we discuss next.

We shall briefly recall the definition of harmonic measure and some properties of it that we need. Good references for harmonic measure are \cite[Chapter~2]{Conway2}, \cite{GarnettMarshall},  \cite{Pommerenke} and \cite{BracciContrerasDM}.

A standard definition of harmonic measure in the unit disc is in terms of the Poisson integral
\[
\omega(z,A,\D)=\frac{1}{2\pi}\int_A \frac{1-|z|^2}{|\zeta-z|^2}\,|d\zeta|,\quad z\in \D,
\]
where $A\subset \partial\D$ is measurable. The real function~$\omega$ is harmonic in~$\D$ with $0\le \omega(z,A,\D)\le 1$ there, and has radial limit~1 (and indeed nontangential limit~1) at almost every point in~$A$ and radial limit~0 at almost every point in $\partial \D\setminus A$.

It follows that for any such set $A$ we have $\omega(0,A,\D)=\lambda(A)$, where $\lambda$ is normalised Lebesgue measure on $\partial\D$, and more generally  $\omega(z,A,\D)=\lambda(M(A))$, where $M$ is a M\"obius map of $\D$ onto $\D$ taking~$z$ to~0; see \cite[page~85]{Pommerenke}.

We now define harmonic measure in a general simply connected domain $U$ in terms of the pullback under a Riemann map of the normalized Lebesgue measure on the unit circle, following the approach in \cite[Chapter~7]{BracciContrerasDM}.

\begin{defn}[Harmonic measure in a simply connected domain] Let $U\neq\C$ be a simply connected domain, $z\in U$, and let   $\phi:\D\ra U$ be a conformal map such that $\phi(0)=z$.  The \emph{harmonic measure} of a Borel set $A\subset \partial U$ is defined as
$$
\omega(z, A, U)= \lambda((\varphi^*)^{-1}(A)).
$$
(Note that $(\varphi^*)^{-1}(A)$ is a Borel set in $\partial \D$ by Theorem~\ref{thm:PommSect6.2}, part~(a).)
\end{defn}

Since $\lambda$ is invariant under rotation, this definition is independent of the choice of $\phi$ provided it satisfies $\phi(0)=z$.

The harmonic measure $\omega(z, A, U)$ can also be interpreted as the probability of hitting the boundary at a point in~$A$ while following  a Brownian path in~$U$ starting at the point~$z$; see \cite[Section~3.4]{Morters-Peres} and \cite[Chapter~III]{GarnettMarshall}, for example.

By the theorem of Beurling mentioned earlier, the set $\AP(U)$ has full harmonic measure with respect to~$U$. Also, for any Borel set $A\subset \partial U$, the function $\omega(z,A,U)$ is harmonic in~$U$ and takes values in the interval $[0,1]$. Moreover, for almost all points $\zeta\in \AP(U)$ there is a path $\gamma(t)\subset U, 0\le t<1$, the image of a radius in $\D$, such that $\gamma(t)\to \zeta$ and $\omega(\gamma(t),A,U)\to 1_A(\zeta)$ as $t\to 1-$. In this sense, the function $\omega(z,A,U)$ is the solution of the Dirichlet problem in~$U$ with boundary values $\omega(\zeta, A,\D)=1_A(\zeta)$.

\subsection{Radial extensions expand harmonic measure}\label{sect:harmonic measure}
In this section we show that the radial extension of a holomorphic map as defined in Section~\ref{sect:boundary maps} expands the harmonic measure of sets in the boundary. Our result depends on the following version of L\"owner's lemma, which can be found in \cite[Proposition 4.15]{Pommerenke} and \cite[Theorem 7.1.8 and Proposition 7.1.4 part~(4)]{BracciContrerasDM}.
\begin{thm}[L\"owner's lemma]\label{thm:Ransford non continuous} Let~$f$ be a holomorphic self-map of~$\D$, $f^*$ be its radial extension,  and let $S\subset \partial \D$ be a Borel set. Then
\begin{equation}\label{eqtn:Ransford Phil}
\omega(z, (f^*)^{-1}(S), \D)\leq \omega(f(z), S,\D),\;\text{ for } z\in\D.
\end{equation}
\end{thm}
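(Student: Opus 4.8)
The plan is to reduce to the classical continuous case and then remove the continuity assumption by an approximation argument. First I would recall the classical statement of L\"owner's lemma: if $f:\D\to\D$ is holomorphic and \emph{continuous up to $\partial\D$}, with $f(\partial\D)\subset\partial\D$ (or more generally just continuous on $\ov\D$), and $S\subset\partial\D$ is a closed arc, then $\omega(z,(f^*)^{-1}(S),\D)\le\omega(f(z),S,\D)$; this is the version in \cite[Proposition~4.15]{Pommerenke}. The key point is that $\omega(\cdot,(f^*)^{-1}(S),\D)$ is a bounded harmonic function whose boundary values are dominated by those of the harmonic function $\omega(f(\cdot),S,\D)$ (which is harmonic in $\D$ since $\omega(\cdot,S,\D)$ is harmonic in $\D$ and $f$ is holomorphic), so the inequality follows from the maximum principle for harmonic functions.

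The substance is to pass from closed arcs and continuous maps to arbitrary Borel sets $S$ and merely holomorphic self-maps $f$. For the first reduction I would fix $z\in\D$ and regard $\mu(S):=\omega(f(z),S,\D)$ and $\nu(S):=\omega(z,(f^*)^{-1}(S),\D)$ as two Borel measures on $\partial\D$ (that $(f^*)^{-1}(S)$ is Borel when $S$ is Borel is exactly Theorem~\ref{thm:PommSect6.2}(a), so $\nu$ is well defined). The inequality $\nu\le\mu$ is known on all closed arcs, hence on all finite unions of closed arcs, which generate the Borel $\sigma$-algebra; by a standard monotone-class / outer-regularity argument (both $\mu$ and $\nu$ are finite Borel measures on a compact metric space, hence outer regular, and every open set is an increasing union of finite unions of arcs) the inequality $\nu(S)\le\mu(S)$ extends to all Borel sets $S$. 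For the second reduction, from continuous $f$ to general holomorphic $f$, I would approximate: for $0<r<1$ set $f_r(w):=f(rw)$, which is holomorphic on a neighbourhood of $\ov\D$, hence continuous on $\ov\D$, and maps $\ov\D$ into $\D$. The classical case applied to $f_r$ (whose boundary values $f_r^*$ are just $f_r|_{\partial\D}$) gives $\omega(z,f_r^{-1}(S),\D)\le\omega(f_r(z),S,\D)=\omega(f(rz),S,\D)$. Letting $r\to1$, the right-hand side tends to $\omega(f(z),S,\D)$ by continuity of $\omega(\cdot,S,\D)$ at the interior point $f(z)$; the left-hand side requires knowing that $f_r^{-1}(S)$ converges in measure to $(f^*)^{-1}(S)$, which follows because $f_r\to f^*$ radially a.e.\ on $\partial\D$ (definition of the radial extension) combined with Fatou's lemma / dominated convergence applied to the indicator functions, at least when $S$ is a closed arc so that the boundary $\partial S$ is $\omega(f(z),\cdot,\D)$-null; one then re-runs the monotone-class argument to recover all Borel $S$.

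The main obstacle I expect is the radial-limit bookkeeping in the approximation step: $f_r$ does not converge to $f$ uniformly on $\ov\D$, only radially a.e.\ on $\partial\D$, so one must be careful that $\{\zeta: f_r(\zeta)\in S\}\to\{\zeta: f^*(\zeta)\in S\}$ in the right sense and that no mass escapes onto $\partial S$. Handling this cleanly is precisely why the statement is phrased for Borel sets and why the proof should first establish the arc case (where $\partial S$ is finite, hence harmonically negligible for the interior point $f(z)$) and only then bootstrap to general Borel $S$. An alternative route that sidesteps some of this is to invoke the probabilistic interpretation of harmonic measure: $\omega(z,(f^*)^{-1}(S),\D)$ is the probability that Brownian motion from $z$ exits $\D$ at a point whose $f^*$-image lies in $S$, and conformal invariance of Brownian motion up to time change, together with the fact that the image of the Brownian path under $f$ exits $\D$ no later than the original path does, yields the inequality directly; but since the paper cites \cite{Pommerenke} and \cite{BracciContrerasDM}, the approximation proof sketched above is the one I would write out.
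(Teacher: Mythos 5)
The paper does not itself prove Theorem~\ref{thm:Ransford non continuous}: it simply cites it from \cite[Proposition~4.15]{Pommerenke} and \cite[Theorem 7.1.8 and Proposition 7.1.4 part~(4)]{BracciContrerasDM}, so there is no internal proof to compare against. Your reduction from arcs to general Borel sets is fine: for fixed $z$, both $S\mapsto\omega(z,(f^*)^{-1}(S),\D)$ and $S\mapsto\omega(f(z),S,\D)$ are finite Borel measures on $\partial\D$ (Theorem~\ref{thm:PommSect6.2}(a) makes the first well defined), every open subset of $\partial\D$ is a countable disjoint union of open arcs, and outer regularity then carries the inequality from arcs to open sets to arbitrary Borel sets.

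The approximation step from continuous $f$ to general holomorphic $f$, however, does not work as written. Since $f$ maps $\D$ into the \emph{open} disc, the dilate $f_r(w)=f(rw)$ maps $\ov\D$ into $\D$, so $f_r(\zeta)\in\D$ for every $\zeta\in\partial\D$ and every $r<1$. Consequently, for $S\subset\partial\D$ the set $(f_r^*)^{-1}(S)=\{\zeta\in\partial\D:f(r\zeta)\in S\}$ is \emph{empty} for every $r<1$, and the inequality you derive for $f_r$ collapses to the vacuous $0\le\omega(f(rz),S,\D)$; nothing can be extracted from it as $r\to1$. Correspondingly, $\mathbf{1}_{\{f_r(\zeta)\in S\}}$ is identically $0$ and does not converge to $\mathbf{1}_{(f^*)^{-1}(S)}$: the pointwise convergence $f_r(\zeta)\to f^*(\zeta)$ never brings $f_r(\zeta)$ onto the boundary set $S$, so no Egorov, Fatou, or dominated-convergence device can bridge the gap -- the approximating sets are all empty, not merely hard to control. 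But the arc case for general holomorphic self-maps is precisely what the $f_r$ step was supposed to supply. The probabilistic route you mention in passing is, in my view, the cleanest way to close this gap and is worth writing out in full: since $f(\D)\subset\D$, the time-changed image $f(B_t)$ of a Brownian path $B$ started at $z$ stays in $\D$ up to the exit time $\tau$ of $B$, and by the probabilistic Fatou theorem its limit as $t\to\tau^{-}$ is $f^*(B_\tau)$ for almost every path; hence the event $\{f^*(B_\tau)\in S\}$ is contained in the event that $f(B)$ exits $\D$ through $S$, whose probability is $\omega(f(z),S,\D)$. (Note the image exits $\D$ \emph{no earlier} than $B$, not ``no later'' -- the inequality comes from an inclusion of events, not from a comparison of exit times.) An analytic alternative is to exponentiate the Herglotz function with real part $\omega(\cdot,S,\D)$ so as to compare bounded \emph{analytic} functions, where Lindel\"of's theorem controls curve-versus-nontangential limits, but that is a genuinely different argument rather than a repair of the $f_r$ approximation.
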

\begin{rem*}
If~$f$ is a M\"obius map of $\D$ onto $\D$, or even an inner function, then equality holds in \eqref{eqtn:Ransford Phil} throughout $\D$; see \cite[Corollary~1.5(b)]{doering-mane}.
\end{rem*}

In our proofs we use the following extension of Theorem~\ref{thm:Ransford non continuous} to simply connected domains.
\begin{thm}[Expansion on the boundary of simply connected domains]\label{Cor:expansion}
Let $f:U\to V$ be a holomorphic map between simply connected domains with full radial extension $f^*$ and let $S\subset \partial V$ be a Borel set. Then $(f^*)^{-1}(S)$ is a Borel set and, for $z\in U$,
\begin{equation}\label{eqtn:Ransford Phil general}
\omega(z, (f^*)^{-1}(S), U)\leq \omega(f(z), S,V).
\end{equation}
In particular, if  $(f^{*})^{-1}(S)$ has positive (respectively, full) harmonic measure with respect to~$U$, then $S$ has positive (respectively, full) harmonic measure with respect to~$V$.
\end{thm}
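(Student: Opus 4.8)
The plan is to reduce the statement to the disc version of L\"owner's lemma (Theorem~\ref{thm:Ransford non continuous}) by conjugating with Riemann maps, exactly as in the proof of Proposition~\ref{simplecases}. First I would fix $z\in U$ and choose Riemann maps $\varphi_U:\D\to U$ and $\varphi_V:\D\to V$ with $\varphi_U(0)=z$ and $\varphi_V(0)=f(z)$; set $g:=\varphi_V^{-1}\circ f\circ \varphi_U$, a holomorphic self-map of $\D$. The first point to check is that $(f^*)^{-1}(S)$ is a Borel (hence universally measurable) subset of $\partial U$, so that $\omega(z,(f^*)^{-1}(S),U)$ is defined: this follows from Theorem~\ref{thm:PommSect6.2}, since $f$ (lifted to the disc via the arguments in Section~\ref{sect:boundary maps}) is, after composing with a suitable square root and M\"obius transformation, a bounded holomorphic function on $\D$, so $\Rad$ and preimages of Borel sets under the radial extension are Borel; then $(f^*)^{-1}(S)=\varphi_U^*\big((g^*)^{-1}((\varphi_V^*)^{-1}(S))\big)$ up to a set of harmonic measure zero, and Theorem~\ref{thm:PommSect6.2}(b) keeps this Borel.

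The core computation is the chain of identities and the single inequality. By definition of harmonic measure in a simply connected domain,
\[
\omega(z,(f^*)^{-1}(S),U)=\lambda\big((\varphi_U^*)^{-1}((f^*)^{-1}(S))\big).
\]
The key claim is that, up to a $\lambda$-null set, $(\varphi_U^*)^{-1}((f^*)^{-1}(S)) = (g^*)^{-1}\big((\varphi_V^*)^{-1}(S)\big)$. This is precisely the statement that radial approach in $\D$ is carried by $\varphi_U$ to radial approach to $\partial U$, then by $f$ to some (not necessarily radial) approach to $\partial V$, and that by the Lindel\"of/Bagemihl arguments already used in the proof of Proposition~\ref{composing-radial extns} the value $f^*$ on $\partial U$ matches $\varphi_V\circ g^*\circ(\varphi_U^*)^{-1}$ almost everywhere; this is the one place where the full radial extension hypothesis is genuinely used. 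Granting this, and writing $T:=(\varphi_V^*)^{-1}(S)\subset\partial\D$ (a Borel set), we get
\[
\omega(z,(f^*)^{-1}(S),U)=\lambda\big((g^*)^{-1}(T)\big)=\omega\big(0,(g^*)^{-1}(T),\D\big)\le \omega(g(0),T,\D)=\lambda\big(M(T)\big),
\]
where the inequality is Theorem~\ref{thm:Ransford non continuous} and $M$ is a M\"obius self-map of $\D$ sending $g(0)$ to $0$; but $\omega(g(0),T,\D)=\lambda(M(T))=\omega(f(z),S,V)$ by the rotation-invariant/M\"obius-invariant description of harmonic measure recalled in Section~\ref{sec:prop_harmonic}, since $g(0)=\varphi_V^{-1}(f(z))$ and $T=(\varphi_V^*)^{-1}(S)$. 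This gives \eqref{eqtn:Ransford Phil general}. The final ``in particular'' clause is immediate: if $(f^*)^{-1}(S)$ has positive harmonic measure in $U$ then the left side of \eqref{eqtn:Ransford Phil general} is positive for $z\in U$, forcing $\omega(f(z),S,V)>0$; and if $(f^*)^{-1}(S)$ has full harmonic measure, apply the inequality to $\partial V\setminus S$ in place of $S$ (noting $(f^*)^{-1}(\partial V\setminus S)\subset \partial U\setminus (f^*)^{-1}(S)$ has zero harmonic measure) to conclude $\omega(f(z),\partial V\setminus S,V)=0$.

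The main obstacle, and the step deserving the most care, is the almost-everywhere identification of the two preimage sets in $\partial\D$, i.e.\ verifying that $f^*$ and $\varphi_V\circ g^*\circ(\varphi_U^*)^{-1}$ agree off a harmonic-measure-null set. The subtlety is that $f$ composed with $\varphi_U$ need not send radii of $\D$ to radial curves in $V$, so one cannot directly invoke the disc radial-limit theorem for $g=\varphi_V^{-1}\circ f\circ\varphi_U$ and match it with the definition of $f^*$; instead one argues as in Proposition~\ref{composing-radial extns}, replacing the image curve by a homotopic radial one in $V$ and using Bagemihl's ambiguous point theorem to control the exceptional set. Once this bookkeeping about accesses and null sets is in place, the rest is a routine transfer of L\"owner's lemma through the Riemann maps.
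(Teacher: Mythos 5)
Your argument for the inequality \eqref{eqtn:Ransford Phil general} is correct and takes essentially the same route as the paper's proof: pass to the disc via Riemann maps $\varphi_U,\varphi_V$ normalised by $\varphi_U(0)=z$, $\varphi_V(0)=f(z)$, set $g=\varphi_V^{-1}\circ f\circ\varphi_U$, identify $(\varphi_U^*)^{-1}\big((f^*)^{-1}(S)\big)$ with $(g^*)^{-1}\big((\varphi_V^*)^{-1}(S)\big)$ modulo a $\lambda$-null set by the composition/access arguments of Proposition~\ref{composing-radial extns} and Theorem~\ref{thm:PommSect6.2}, and then invoke Theorem~\ref{thm:Ransford non continuous} at $w=0$. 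The paper records only the inclusion $((f\circ\varphi_U)^*)^{-1}(S)\subset(g^*)^{-1}\big((\varphi_V^*)^{-1}(S)\big)$ a.e., which suffices; your a.e.\ equality is also correct and implies it. (One small point: since you chose $\varphi_V(0)=f(z)$, you have $g(0)=0$, so the M\"obius map $M$ you introduce is the identity; its appearance is a harmless slip.)

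There is, however, a genuine error in your deduction of the ``full measure'' part of the ``in particular'' clause. You propose to apply \eqref{eqtn:Ransford Phil general} with $\partial V\setminus S$ in place of $S$ and, noting that $(f^*)^{-1}(\partial V\setminus S)$ has zero harmonic measure, to conclude $\omega(f(z),\partial V\setminus S,V)=0$. But the inequality reads
\[
\omega\bigl(z,(f^*)^{-1}(\partial V\setminus S),U\bigr)\le\omega(f(z),\partial V\setminus S,V),
\]
so knowing the left-hand side is $0$ gives only $0\le\omega(f(z),\partial V\setminus S,V)$, which is vacuous; the inequality goes the wrong way to propagate null sets forward. The correct (and simpler) deduction is to use \eqref{eqtn:Ransford Phil general} directly with $S$: if $\omega\bigl(z,(f^*)^{-1}(S),U\bigr)=1$ then $1\le\omega(f(z),S,V)\le 1$, so $\omega(f(z),S,V)=1$, i.e.\ $S$ has full harmonic measure with respect to $V$.
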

\begin{proof}
First,  let $\phi_U$ and $\phi_V$ be Riemann maps from $\D$ to $U$ and $V$, respectively. Then $(f^*)^{-1}(S)$ is a Borel set because it is the image under $\phi_U^*$ of the preimage under$(f\circ \phi_U)^*$ of~$S$, apart from a possible set of harmonic measure~0. This follows from our definition of radial extensions and by applying both parts of Theorem~\ref{thm:PommSect6.2}.

Now define
\[
g(w)=\phi_V^{-1}\circ f \circ \phi_U(w),\;\text{ for }w\in \D,
\]
so $g$ is a self-map of $\D$. Then $(\phi_V^*)^{-1}(S)$ is a Borel set, by Theorem~\ref{thm:PommSect6.2}\;(a). Also, by our definition of radial extension, and also the bijection between accesses and non-tangential limits of Riemann maps, mentioned earlier, it is straightforward to check that apart from a set of harmonic measure~0 on $\partial \D$, we have
\[
((f\circ \phi_U)^*)^{-1}(S)\subset (g^*)^{-1}((\phi_V^*)^{-1}(S)).
\]
Therefore, by Theorem~\ref{thm:Ransford non continuous}, we know that, for $w\in\D$,
\begin{equation}\label{omegas}
\omega(w,((f\circ \phi_U)^*)^{-1}(S), \D)\le \omega(w, (g^*)^{-1}((\phi_V^*)^{-1}(S)), \D)\leq \omega(g(w),(\phi_V^*)^{-1}(S),\D).
\end{equation}
Finally, we take $z\in U$ and assume that the Riemann maps have been chosen so that
\[
\phi_U(0)=z\quad\text{and}\quad \phi_V(0)=f(z).
\]
Then $g(0)=0$ and we have, by the definition of harmonic measure,
\[
\omega(0,((f\circ \phi_U)^*)^{-1}(S),\D)=\omega(z, (f^*)^{-1}(S), U)\;\;\text{and}\;\;\omega(0,(\phi_V^*)^{-1}(S),\D)=\omega(f(z), S,V),
\]
so the inequality \eqref{eqtn:Ransford Phil general} follows by applying \eqref{omegas} with $w=0$.
\end{proof}
Combining Theorem~\ref{Cor:expansion} with Proposition~\ref{composing-radial extns}, we obtain the following useful result about composing full radial extensions.
\begin{cor}\label{composing-full-radial-extns}
Let $f_1:U_0\to U_1$ and $f_2:U_1 \to U_2$ be holomorphic maps between simply connected domains, with full radial extensions $f_1^*$  and $f_2^*$, respectively.

Then $f_2\circ f_1:U_0\to U_2$ has a full radial extension given by $f_2^*\circ f_1^*$.
\end{cor}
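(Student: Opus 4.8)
The plan is to obtain this corollary essentially for free by combining the two results that precede it: Proposition~\ref{composing-radial extns}, which already produces a radial extension of $f_2\circ f_1$ on a concrete subset of $\partial U_0$, and Theorem~\ref{Cor:expansion}, which I will use to verify that the piece of $\partial U_0$ discarded by Proposition~\ref{composing-radial extns} is harmonically negligible. The value of the extension will then be read off from the proof of Proposition~\ref{composing-radial extns}.

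Concretely, I would first fix Borel sets $E_0\subset\AP(U_0)$ and $E_1\subset\AP(U_1)$, of full harmonic measure with respect to $U_0$ and $U_1$ respectively, on which $f_1^*$ and $f_2^*$ are defined; such Borel representatives exist by the definition of a full radial extension together with the fact (recorded after Theorem~\ref{thm:PommSect6.2}) that $\AP(U)$ is a Borel set and the regularity of harmonic measure. Put $N_1:=\partial U_1\setminus E_1$, a Borel subset of $\partial U_1$ with $\omega(w,N_1,U_1)=0$ for all $w\in U_1$. Since $f_1$ has a full radial extension, Theorem~\ref{Cor:expansion} applies to $f_1$ and the Borel set $N_1$ and gives that $(f_1^*)^{-1}(N_1)$ is Borel with
\[
\omega(z,(f_1^*)^{-1}(N_1),U_0)\le\omega(f_1(z),N_1,U_1)=0,\qquad z\in U_0,
\]
so $(f_1^*)^{-1}(N_1)$ has harmonic measure zero with respect to $U_0$. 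Hence $E_0\setminus(f_1^*)^{-1}(N_1)=E_0\setminus(f_1^*)^{-1}(\partial U_1\setminus E_1)$ still has full harmonic measure with respect to $U_0$ and is contained in $\AP(U_0)$. Applying Proposition~\ref{composing-radial extns} with these choices of $E_0,E_1$, the map $f_2\circ f_1$ has a radial extension at almost every point of this set, hence on a subset of $\AP(U_0)$ of full harmonic measure; by Definition~\ref{defn:boundary extension general}(c) this says exactly that $f_2\circ f_1$ admits a full radial extension.

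To identify the extension with $f_2^*\circ f_1^*$, I would simply revisit the argument inside the proof of Proposition~\ref{composing-radial extns}: for $\zeta_0$ in the full-measure set constructed above and any path $\gamma_0(t)\to\zeta_0$ radially in $U_0$, the image satisfies $f_1(\gamma_0(t))\to f_1^*(\zeta_0)=:\zeta_1$; if $\zeta_1\in U_1$ then $f_2\circ f_1$ extends continuously there with value $f_2(\zeta_1)$, whereas if $\zeta_1\in\partial U_1$ then $\zeta_1\in E_1$ and the homotopy-plus-Bagemihl argument of that proof yields $f_2(f_1(\gamma_0(t)))\to f_2^*(\zeta_1)$ away from a further set of harmonic measure zero. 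In either case the limit is $(f_2^*\circ f_1^*)(\zeta_0)$, with the standing convention that $f_2^*$ agrees with $f_2$ at interior points of $U_1$.

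I do not anticipate a genuine obstacle here; the only things needing care are measurability bookkeeping — choosing $E_1$ (hence $N_1$ and $(f_1^*)^{-1}(N_1)$) Borel so that Theorem~\ref{Cor:expansion} is applicable, and checking that ``almost all points'' in Proposition~\ref{composing-radial extns} composes with ``full harmonic measure'' without loss — and the caveat, already visible in Proposition~\ref{composing-radial extns}, that $f_1^*$ may carry a set of boundary points of positive harmonic measure into the interior of $U_1$, which is precisely why the identity $(f_2\circ f_1)^*=f_2^*\circ f_1^*$ has to be interpreted with $f_2^*$ extended by $f_2$ on $U_1$.
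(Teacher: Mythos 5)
Your argument is essentially the paper's own: both use Theorem~\ref{Cor:expansion} to show $(f_1^*)^{-1}(\partial U_1\setminus E_1)$ has zero harmonic measure, hence $E_0\setminus(f_1^*)^{-1}(\partial U_1\setminus E_1)$ has full harmonic measure, and then invoke Proposition~\ref{composing-radial extns}. You spell out some bookkeeping (Borel representatives for $E_0,E_1$, the convention that $f_2^*$ agrees with $f_2$ on $U_1$, and the explicit identification of the limit as $f_2^*\circ f_1^*$) that the paper leaves implicit, but there is no substantive difference.
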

\begin{proof}
By hypothesis the sets $E_0$ and $E_1$ specified in Proposition~\ref{composing-radial extns} have full harmonic measure with respect to~$U_0$ and~$U_1$, respectively, and we can deduce that $(f_1^*)^{-1}(\partial U_1\setminus E_1)$ must have zero harmonic measure with respect to~$U_0$ by Theorem~\ref{Cor:expansion}. It follows that $E_0 \setminus (f_1^*)^{-1}(\partial U_1\setminus E_1)$ has full harmonic measure with respect to~$U_0$, as required.
\end{proof}

\section{Boundary convergence and proof of Theorem A}\label{sec:boundaryclassification}

Before proving Theorem~A, we give a classification of sequences $F_n: U \to U_n, n\in\N,$ of holomorphic maps between simply connected domains in terms of whether orbits $F_n(z)$ of interior points $z\in U$ converge towards the boundary or not. We gave such a classification in the special case that~$U$ is a wandering domain of a holomorphic map $f$ in \cite[Theorem~C]{BEFRS}. The same proof holds in our general setting and the techniques used also lead to a proof of Theorem~A.

\begin{thm}[Boundary convergence classification]\label{bdry-class}
Let $F_n:U \to U_n$ be a sequence of holomorphic maps between simply connected domains. Then exactly one of the following holds:
\begin{itemize}
\item[\rm(a)]
  $\liminf_{n\to\infty} \operatorname{dist}(F_{n}(z),\partial U_{n})>0$  for all $z\in U$,
    that is, all orbits stay away from the boundary;
\item[\rm(b)] there exists a subsequence $n_k\to \infty$ for which $\lim_{k\to\infty}\operatorname{dist}(F_{n_k}(z),\partial U_{n_k}) = 0$ for all $z\in U$, while for a different subsequence $m_k\to\infty$ we have that
    \[\liminf_{k \to \infty} \operatorname{dist}(F_{m_k}(z),\partial U_{m_k})>0, \quad\text{for }z\in U;\]
\item[\rm(c)] $\lim_{n\to\infty}\operatorname{dist}(F_{n}(z),\partial U_{n})= 0$ for all $z\in U$, that is, all orbits converge to the boundary.
\end{itemize}
\end{thm}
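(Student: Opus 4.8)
The plan is to reduce the entire classification to an elementary trichotomy for a single scalar sequence, using the fact that, for a holomorphic map between proper simply connected domains, the distance of the image of any point to the boundary is controlled --- from both sides, and by a constant independent of the map --- by the same quantity for the image of a fixed reference point.

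First I would fix $z_0\in U$ and set $d_n:=\dist(F_n(z_0),\partial U_n)$; since each $U_n$ is a proper subdomain of $\C$ and $F_n(z_0)\in U_n$, this is a sequence of finite positive reals. The key claim is that there is a continuous increasing function $\Phi\colon[0,\infty)\to[1,\infty)$, independent of everything, with
\[
\Phi\bigl(\dist_U(z,z_0)\bigr)^{-1}\, d_n \;\le\; \dist\bigl(F_n(z),\partial U_n\bigr) \;\le\; \Phi\bigl(\dist_U(z,z_0)\bigr)\, d_n ,\qquad z\in U,\ n\in\N .
\]
To prove this I would combine two standard ingredients. By the Schwarz--Pick inequality, $\dist_{U_n}(F_n(z),F_n(z_0))\le\dist_U(z,z_0)$, and the right-hand side is finite because $U$ is hyperbolic. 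By the Koebe distortion estimate --- the hyperbolic-metric estimate used in \cite{BEFRS}, which follows from the two-sided bound $\tfrac14\le \dist(w,\partial V)\,\lambda_V(w)\le 1$ for the hyperbolic density $\lambda_V$ of a simply connected $V\subsetneq\C$, integrated along a geodesic and using that $w\mapsto\dist(w,\partial V)$ is $1$-Lipschitz --- one has an absolute constant $K$ with
\[
\bigl|\log\dist(w,\partial V)-\log\dist(w',\partial V)\bigr|\le K\,\dist_V(w,w'),\qquad w,w'\in V ,
\]
for every simply connected $V\subsetneq\C$. Applying this in $V=U_n$ with $w=F_n(z)$, $w'=F_n(z_0)$ and then Schwarz--Pick gives the claim with $\Phi(t)=e^{Kt}$; the essential feature is that the multiplicative constant $\Phi(\dist_U(z,z_0))$ is independent of $n$.

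Granting the comparison, the rest is bookkeeping. Every sequence of positive reals $(d_n)$ satisfies exactly one of (i) $\liminf_n d_n>0$; (ii) $\lim_n d_n=0$; (iii) $\liminf_n d_n=0<\limsup_n d_n$. In case (i) the left inequality gives $\liminf_n\dist(F_n(z),\partial U_n)\ge\Phi(\dist_U(z,z_0))^{-1}\liminf_n d_n>0$ for every $z\in U$, which is alternative~(a). In case (ii) the right inequality gives $\dist(F_n(z),\partial U_n)\le\Phi(\dist_U(z,z_0))\,d_n\to0$ for every $z\in U$, which is alternative~(c). In case (iii) I would fix once and for all a subsequence $(n_k)$ with $d_{n_k}\to0$ and a subsequence $(m_k)$ with $d_{m_k}\ge c>0$ for all $k$; then for every $z\in U$ the two inequalities give $\dist(F_{n_k}(z),\partial U_{n_k})\le\Phi(\dist_U(z,z_0))\,d_{n_k}\to0$ and $\liminf_k\dist(F_{m_k}(z),\partial U_{m_k})\ge\Phi(\dist_U(z,z_0))^{-1}c>0$, which is alternative~(b). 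Mutual exclusivity is immediate: in~(a) no subsequence of $(\dist(F_n(z),\partial U_n))_n$ tends to $0$, whereas in both~(b) and~(c) one does, and~(b) also produces a subsequence bounded away from~$0$, which~(c) rules out.

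I do not expect a genuine obstacle here; all the analytic weight is carried by the Koebe/Schwarz--Pick comparison, which is quoted. The one point deserving attention is that in alternative~(b) the subsequences $(n_k)$ and $(m_k)$ must serve all interior points simultaneously, and this is automatic precisely because the comparison is a pointwise-in-$n$ \emph{ratio} bound whose constant depends on $z$ only through $\dist_U(z,z_0)$ --- so a single pair of subsequences extracted from the scalar sequence $(d_n)$ works for every orbit. The same comparison, sharpened to a Gronwall estimate along a geodesic, also yields Theorem~A: it gives $|F_n(z)-F_n(z_0)|\le\dist(F_n(z_0),\partial U_n)\bigl(e^{K\dist_U(z,z_0)}-1\bigr)$, which tends to $0$ as soon as $\dist(F_n(z_0),\partial U_n)\to0$.
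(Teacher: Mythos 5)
Your proof is correct and follows essentially the same route as the paper: the paper reduces the classification to the behaviour of $\dist(F_n(z_0),\partial U_n)$ for a single reference point $z_0$ by invoking the hyperbolic-density estimate \cite[Lemma~4.1]{BEFRS} (reproduced here as Lemma~\ref{lem:hyp density}) together with the Koebe-type bounds $\rho_U(z)\asymp 1/\dist(z,\partial U)$ and the Schwarz--Pick lemma, which is precisely the content of your two-sided comparison $\dist(F_n(z),\partial U_n)\asymp_z\dist(F_n(z_0),\partial U_n)$. Your derivation of the comparison via the log-Lipschitz estimate for $w\mapsto\dist(w,\partial V)$ along a geodesic is a minor repackaging of the same ingredients (the paper bounds the ratio of densities by $e^{\pm 2\dist_U}$; you bound the ratio of distances by $e^{\pm K\dist_U}$ directly), and the subsequent trichotomy bookkeeping matches the intended argument.
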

Our proof of this result in \cite{BEFRS} in the special case that~$U$ is a wandering domain of a holomorphic function $f$ with $F_n = f^n$ depends on the following lemma \cite[Lemma~4.1]{BEFRS}, relating hyperbolic density $\rho_U$ to hyperbolic distance in a simply connected domain~$U$. Hence the original proof of Theorem~\ref{bdry-class} for simply connected wandering domains also works in our general setting.

\begin{lem}[Estimate of hyperbolic quantities]\label{lem:hyp density}
Let $U \subset \mathbb{C}$ be a simply connected domain. Then, for all $z,z'\in U$,
\[
\exp(-2\dist_U(z,z'))\le \frac{\rho_U(z')}{\rho_U(z)} \le \exp(2\dist_U(z,z')).
\]
\end{lem}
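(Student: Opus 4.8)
The plan is to pull the estimate back to the unit disc via a Riemann map and then read it off from the Koebe distortion theorem; the two-sided form of that theorem yields both inequalities at once, with the constant $2$ appearing automatically.

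Concretely, fix $z,z'\in U$ (the case $z=z'$ being trivial) and choose a Riemann map $\varphi:\D\to U$ with $\varphi(0)=z$; pre-composing with a rotation of $\D$ we may assume $z'=\varphi(r)$ for some $r\in(0,1)$. By conformal invariance of the hyperbolic metric $\dist_U(z,z')=\dist_{\D}(0,r)$, which with the standard normalisation $\rho_{\D}(w)=2/(1-|w|^2)$ equals $\log\frac{1+r}{1-r}=:d$. Since the densities transform by $\rho_U(\varphi(w))\,|\varphi'(w)|=\rho_{\D}(w)$,
\[
\frac{\rho_U(z')}{\rho_U(z)}=\frac{\rho_{\D}(r)}{\rho_{\D}(0)}\cdot\frac{|\varphi'(0)|}{|\varphi'(r)|}=\frac{1}{1-r^2}\cdot\frac{|\varphi'(0)|}{|\varphi'(r)|}.
\]
Applying the Koebe distortion theorem to the normalised univalent function $w\mapsto(\varphi(w)-\varphi(0))/\varphi'(0)$ gives
\[
\frac{1-r}{(1+r)^3}\le\frac{|\varphi'(r)|}{|\varphi'(0)|}\le\frac{1+r}{(1-r)^3},
\]
and feeding these bounds into the previous identity yields
\[
\Bigl(\frac{1-r}{1+r}\Bigr)^2\le\frac{\rho_U(z')}{\rho_U(z)}\le\Bigl(\frac{1+r}{1-r}\Bigr)^2.
\]
As $\frac{1+r}{1-r}=e^{d}=e^{\dist_U(z,z')}$, this is exactly the claim.

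I do not expect a genuine obstacle here: once the reduction to $\D$ is in place, the rest is elementary bookkeeping on top of the Koebe distortion theorem. The only points needing care are the normalisation steps — using the freedom to take $\varphi(0)=z$ and to rotate so that $z'=\varphi(r)$ with $r$ real — and confirming that the constant is precisely $2$, which works because the M\"obius factor $1/(1-r^2)$ and the distortion factor $(1+r)^3/(1-r)$ combine into the clean square $\bigl((1+r)/(1-r)\bigr)^2$. If one prefers to sidestep distortion estimates in this packaging, an equivalent route is to prove the infinitesimal bound $|\nabla\log\rho_U|\le 2\rho_U$ — which amounts to the univalent coefficient estimate $|\varphi''(0)/\varphi'(0)|\le 4$, i.e.\ $|a_2|\le 2$ — and to integrate it along a hyperbolic geodesic from $z$ to $z'$.
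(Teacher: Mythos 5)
The paper does not prove this lemma; it is stated here exactly as cited from \cite[Lemma~4.1]{BEFRS}, so there is no in-paper argument to compare against. Your proof is correct and self-contained. Pulling back by a Riemann map $\varphi$ with $\varphi(0)=z$ and $\varphi(r)=z'$, the transformation rule $\rho_U(\varphi(w))\,|\varphi'(w)|=\rho_\D(w)$ and the Koebe distortion bounds for $|\varphi'(r)|/|\varphi'(0)|$ combine with the factor $1/(1-r^2)$ to give
\[
\Bigl(\tfrac{1-r}{1+r}\Bigr)^{2}\le\frac{\rho_U(z')}{\rho_U(z)}\le\Bigl(\tfrac{1+r}{1-r}\Bigr)^{2},
\]
which is $e^{\pm 2d}$ precisely under the normalisation $\rho_\D(w)=2/(1-|w|^2)$. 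That is indeed the normalisation in force in this paper: the estimate $\rho_{U_n}(F_n(z_0))\ge 1/\bigl(2\dist(F_n(z_0),\partial U_n)\bigr)$ invoked in the proof of Theorem~\ref{thm:A+} requires $\rho_\D(0)=2$, and with the other common convention $\rho_\D(w)=1/(1-|w|^2)$ the exponent in the lemma would be $4$ rather than $2$ --- so your care over the normalisation is essential, not merely cosmetic. The alternative route you sketch, proving the gradient bound $|\nabla\log\rho_U|\le 2\rho_U$ from the coefficient estimate $|a_2|\le 2$ and integrating along a hyperbolic geodesic, is likewise correct and is simply the infinitesimal form of the same statement; either packaging is fine.
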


Theorem~A strengthens part~(c) of the boundary convergence classification by showing that all orbits converge to the boundary with the same limiting behaviour, and its proof is also based on Lemma~\ref{lem:hyp density}. It is useful here to state a more precise result than Theorem~A.
\begin{thm}\label{thm:A+}
Let $F_n: U \to U_n, n\in\N,$ be a sequence of holomorphic maps between simply connected domains and let $z, z_0\in U$ with $d=\dist_U(z,z_0)$. Then
\begin{equation}\label{eucl-hyp}
|F_n(z)-F_n(z_0)|\le 2de^{2d}\dist(F_n(z_0),\partial U_n),\; \text{ for } n\in\N.
\end{equation}
Therefore, if we also have
\[
\dist(F_n(z_0), \partial U_n) \to 0 \;\text{ as } n \to \infty,
\]
then, for all $z \in U$,
\[
|F_n(z)-F_n(z_0)| \to 0 \;\text{ as } n \to \infty.
\]
\end{thm}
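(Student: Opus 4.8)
The plan is to derive the Euclidean estimate \eqref{eucl-hyp} from three standard ingredients: the Schwarz--Pick contraction of the hyperbolic metric under holomorphic maps, Lemma~\ref{lem:hyp density}, and the Koebe-type lower bound $\rho_W(w)\ge 1/(2\dist(w,\partial W))$, valid in every simply connected domain $W$.

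First I would fix $z,z_0\in U$, write $d=\dist_U(z,z_0)$, and note that, since $F_n\colon U\to U_n$ is holomorphic between simply connected domains, Schwarz--Pick gives $\dist_{U_n}(F_n(z),F_n(z_0))\le d$ for every $n\in\N$. Let $\gamma_n$ be the hyperbolic geodesic segment in $U_n$ joining $F_n(z_0)$ to $F_n(z)$; parametrising it by hyperbolic arc length, every point $w$ on $\gamma_n$ satisfies $\dist_{U_n}(w,F_n(z_0))\le \dist_{U_n}(F_n(z),F_n(z_0))\le d$.

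Next I would estimate the Euclidean length of $\gamma_n$. By Lemma~\ref{lem:hyp density} applied in $U_n$, each $w\in\gamma_n$ satisfies $\rho_{U_n}(w)\ge e^{-2\dist_{U_n}(w,F_n(z_0))}\rho_{U_n}(F_n(z_0))\ge e^{-2d}\rho_{U_n}(F_n(z_0))$, so
\[
|F_n(z)-F_n(z_0)|\le\int_{\gamma_n}|dw|\le\frac{e^{2d}}{\rho_{U_n}(F_n(z_0))}\int_{\gamma_n}\rho_{U_n}(w)\,|dw|.
\]
Since the last integral is the hyperbolic length of the geodesic $\gamma_n$, namely $\dist_{U_n}(F_n(z),F_n(z_0))\le d$, this yields $|F_n(z)-F_n(z_0)|\le d\,e^{2d}/\rho_{U_n}(F_n(z_0))$. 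Finally, feeding in $\rho_{U_n}(F_n(z_0))\ge 1/(2\dist(F_n(z_0),\partial U_n))$ gives exactly \eqref{eucl-hyp}. The displayed limit conclusion is then immediate: $d$ is a fixed constant, so if $\dist(F_n(z_0),\partial U_n)\to 0$ then the right-hand side of \eqref{eucl-hyp} tends to $0$, hence $|F_n(z)-F_n(z_0)|\to0$ for every $z\in U$.

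I do not anticipate a genuine obstacle; the argument is essentially bookkeeping with classical hyperbolic-geometry estimates. The one point needing care is that the normalisations must be consistent: one uses the curvature $-1$ hyperbolic density throughout, for which both Lemma~\ref{lem:hyp density} and the Koebe estimate hold with the stated constants. It is also worth noting that geodesics are used only for convenience --- replacing $\gamma_n$ by any rectifiable curve from $F_n(z_0)$ to $F_n(z)$ of hyperbolic length at most $d+\varepsilon$ (which then automatically stays within hyperbolic distance $d+\varepsilon$ of $F_n(z_0)$) gives the same conclusion with slightly worse constants, so completeness of the hyperbolic metric need not be invoked at all.
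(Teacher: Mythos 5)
Your proof is correct and follows essentially the same route as the paper's: both rely on the Schwarz--Pick contraction, the Koebe-type lower bound $\rho_W(w)\ge 1/(2\dist(w,\partial W))$, Lemma~\ref{lem:hyp density} to control the density along the geodesic segment, and a comparison of Euclidean and hyperbolic lengths along that geodesic. The only (cosmetic) difference is the order in which the two density estimates are combined -- the paper bounds $\rho_{U_n}$ from below on the whole hyperbolic ball $\Delta_n$ before comparing lengths, whereas you defer the Koebe bound to the last line -- but the argument is the same.
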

\begin{proof}
Let $z,z_0$ and $d$ be as in the statement and let
\[\delta_n:=\dist(F_n(z_0),\partial U_n), \;\text{ for } n\in\N.
\]
By the standard estimates of the hyperbolic density on simply connected domains (see \cite[page~13]{Carleson-Gamelin} or \cite[Theorem 8.6]{BeardonMinda}),
$$
\rho_{U_n}(F_n(z_0))\geq\frac{1}{2\dist(F^n(z_0),\partial U_n)}=\frac{1}{2\delta_n}, \;\text{ for } n\in\N.
$$
Hence, by Lemma~\ref{lem:hyp density}, for any hyperbolic disc $\Delta_n$ of radius $d$ centred at $F_n(z_0)$ we have
\begin{equation}\label{eq:hyp density}
\rho_{U_n}(w)\geq \frac{e^{-2d}}{2\delta_n}, \quad \text{for }w \in \Delta_n, n\in\N.
\end{equation}
By the contracting property of the hyperbolic metric, we have $F_n(z)\in\ov{\Delta_n}$ for $n\in\N$.

Now let $\gamma\subset \Delta_n$ be a hyperbolic geodesic joining $F_n(z)$ to $F_n(z_0)$; then the Euclidean distance apart of these two points is bounded from above by the Euclidean length $\leucl(\gamma)$. Since $\gamma$ is a hyperbolic geodesic, we deduce from \eqref{eq:hyp density} that
$$
d\geq \ell_{U_n}(\gamma)=\int_\gamma \rho_{U_n}(z)\,|dz|\geq \frac{e^{-2d}}{2\delta_n}\leucl(\gamma),
$$
so
$$
|F_n(z)-F_n(z_0)|\le \leucl(\gamma)\leq 2\delta_n d e^{2d},\quad \text{for }n\in \N,
$$
which gives \eqref{eucl-hyp} and completes the proof.
\end{proof}
\begin{rem*}
The matter of convergence to the boundary is somewhat delicate in that it is closely related to the shape of the domains $U_n$, and there may be situations where it is more appropriate to use an alternative definition. For example, if the domains shrink with Euclidean diameters tending to 0, then the Denjoy--Wolff set is automatically the whole of $\partial U$. This geometric problem was discussed briefly in \cite{BEFRS}, where the following observation was made.

Recall that the Euclidean distance of a point~$z$ from the boundary of a hyperbolic domain~$U$ is closely related to the hyperbolic density at the point in the domain. Indeed, when $U_n$ are simply connected domains, the standard estimates for hyperbolic density mentioned above imply that
\[
\dist(F_n(z), \partial U_n) \to 0 \iff \rho_{U_n}(F_n(z)) \to \infty.
\]
The proof of the boundary convergence classification and of Theorem~\ref{thm:A+} use the hyperbolic densities $\rho_{U_n}(F_n(z))$ and, as in \cite[Section~4]{BEFRS}, we point out that these results remain true if we replace $\rho_{U_n}(F_n(z))$ by $a_n\rho_{U_n}(F_n(z))$, where the positive sequence $(a_n)$ is chosen in some way depending on the geometry of the domains $U_n$, and the Euclidean distances within $U_n$ are also scaled appropriately. For example, if the domains $U_n$ are shrinking, then it may make sense to say that $F_n(z)$ converges to the boundary if $a_n \rho_{U_n}(F_n(z)) \to \infty$ as $n \to \infty$ where
$$a_n=\sup_D \{\operatorname{diam}D :  D\; \text{is a disc contained in}\;U_n\}.$$
\end{rem*}

\section{Fast convergence to the boundary}\label{sec:fast}
In this section we prove  a general result, Theorem \ref{thm:general result}, of which Theorem B
and Theorem C are special cases. This  result shows that if orbits of points converge sufficiently quickly to the boundary, then the Denjoy--Wolff set has full measure, thus generalising the ADM dichotomy  Theorem~\ref{thm:DM}, part (a).

\begin{thm}\label{thm:general result}
Let $F_n:U \to U_n, n\in\N,$ be a sequence of holomorphic maps between simply connected domains, each with a full radial extension, and suppose that the domains $U_n, n\in\N,$ all satisfy an $\alpha$\emph{-harmonic measure condition} for some $\frac{1}{2}\leq\alpha\leq1$ and some $C(r), r>0,$ independent of $n$.

Suppose that  there exists $z_0 \in U$ such that
\begin{equation}\label{eq:general1}
 \sum_{n=0}^{\infty} \operatorname{dist}(F_n(z_0), \partial U_n)^{\alpha} < \infty.
\end{equation}
Then, for almost every $\zeta \in \partial U$,
\begin{equation}\label{eq:general2}
|F_n(\zeta)-F_n(z_0)| \to 0 \; \text{ as $n \to \infty$},
\end{equation}
and hence, in view of Theorem~A,  the Denjoy--Wolff set has full measure in $\partial U$.
\end{thm}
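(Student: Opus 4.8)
Write $\delta_n=\dist(F_n(z_0),\partial U_n)$. Since $\alpha>0$, the hypothesis $\sum_n\delta_n^{\alpha}<\infty$ forces $\delta_n\to0$, so Theorem~\ref{thm:A+} (the quantitative form of Theorem~A) gives $|F_n(z)-F_n(z_0)|\to0$ for every $z\in U$. Hence it suffices to prove \eqref{eq:general2}: the Denjoy--Wolff conclusion then follows by the triangle inequality, using that each $F_n$ has a full radial extension and that a countable union of sets of zero harmonic measure is null, so that the orbit $(F_n(\zeta))$ is well defined for almost every $\zeta\in\partial U$. Moreover, to obtain \eqref{eq:general2} it is enough to show that, for each fixed $r>0$, the set $E(r):=\{\zeta\in\partial U:\ |F_n(\zeta)-F_n(z_0)|\ge r\ \text{for infinitely many }n\}$ has zero harmonic measure with respect to $U$: indeed $\bigcup_{k\ge1}E(1/k)$ is then null, and off it $\limsup_n|F_n(\zeta)-F_n(z_0)|\le 1/k$ for every $k$, hence equals $0$.

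Fix $r>0$ and put $B_n:=\{\zeta\in\partial U:\ |F_n(\zeta)-F_n(z_0)|\ge r\}$, a Borel (hence universally measurable) subset of $\partial U$, by Theorem~\ref{thm:PommSect6.2} applied to $F_n$ composed with a Riemann map of~$U$. The crux is the estimate
\[
\omega(z_0,B_n,U)\ \le\ C'(r)\,\delta_n^{\alpha},\qquad n\in\N,
\]
with $C'(r)$ independent of $n$. To prove it, split $B_n$ according to whether the radial image $F_n^{*}(\zeta)$ lies on $\partial U_n$ or in $U_n$. For the first part, the harmonic-measure expansion of radial extensions (Theorem~\ref{Cor:expansion}), applied to the Borel set $S_n:=\{w\in\partial U_n:|w-F_n(z_0)|\ge r\}$, gives $\omega\big(z_0,(F_n^{*})^{-1}(S_n),U\big)\le\omega\big(F_n(z_0),S_n,U_n\big)$, and the $\alpha$-harmonic measure condition for~$U_n$, applied at the point $F_n(z_0)$, whose distance to $\partial U_n$ equals $\delta_n$, bounds the right-hand side by $C(r)\,\delta_n^{\alpha}$. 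For the second part, which cannot be dismissed since a radial extension need not map $\partial U$ into $\partial U_n$, a companion estimate built on the same condition controls the harmonic measure of those $\zeta$ with $F_n^{*}(\zeta)\in U_n$ and $|F_n^{*}(\zeta)-F_n(z_0)|\ge r$; in the model case $U_n=\D$ one can use the explicit harmonic majorant $h_n(w)=\tfrac12\big(1-\Re(\overline{\lambda_n}w)\big)$, where $F_n(z_0)=(1-\delta_n)\lambda_n$, to get a bound $\lesssim \delta_n/r^{2}$.

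Granting the displayed estimate, $\sum_n\omega(z_0,B_n,U)\le C'(r)\sum_n\delta_n^{\alpha}<\infty$, so by the first Borel--Cantelli lemma, applied to the probability measure $\omega(z_0,\cdot\,,U)$ on $\partial U$, we get $\omega(z_0,\limsup_n B_n,U)=0$; that is, $E(r)$ is null. As explained in the first paragraph, letting $r$ run through $1/k$, $k\in\N$, then yields $|F_n(\zeta)-F_n(z_0)|\to0$ for almost every $\zeta\in\partial U$, and hence, by Theorem~\ref{thm:A+} and the triangle inequality, $|F_n(\zeta)-F_n(z)|\to0$ for all $z\in U$; so almost every $\zeta\in\partial U$ lies in the Denjoy--Wolff set, which therefore has full harmonic measure with respect to~$U$.

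I expect the main difficulty to be the uniform estimate $\omega(z_0,B_n,U)\le C'(r)\,\delta_n^{\alpha}$, and within it the treatment of boundary points whose radial image falls in the interior of~$U_n$ --- a phenomenon that cannot be ignored here precisely because the $F_n$ are arbitrary holomorphic maps between varying domains. The uniformity in $n$ is exactly what the $\alpha$-harmonic measure condition is designed to supply, and it is also what ties the admissible exponent $\alpha$ to the regularity of $\partial U_n$: one has $\alpha=\tfrac12$ (with $C(r)\asymp r^{-1/2}$) for arbitrary simply connected domains, via classical harmonic-measure and univalent-function estimates, and $\alpha=1$ (with $C(r)\asymp r^{-2}$) when $U_n=\D$, which is how Theorems~B and~C are recovered; intermediate regularity of $\partial U_n$ interpolates. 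Finally, the blow-up $C(r)\to\infty$ as $r\to0$ is harmless, since the Borel--Cantelli step is carried out for each fixed $r$ before letting $r\to0$.
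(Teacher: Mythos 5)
Your overall skeleton is the right one and is essentially the paper's: pass to a quantity $\omega(z_0,B_n,U)$, show it is $O(\delta_n^{\alpha})$ using the boundary expansion property of radial extensions (Theorem~\ref{Cor:expansion}) together with the $\alpha$-harmonic measure condition, and then invoke the first Borel--Cantelli lemma for the measure $\omega(z_0,\cdot\,,U)$. That is exactly the architecture of the paper's proof of Theorem~\ref{thm:general result}. Where you diverge is in how the central estimate is organised, and it is precisely there that your proposal has a genuine gap.

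You apply Theorem~\ref{Cor:expansion} directly to $F_n:U\to U_n$ and to the boundary set $S_n\subset\partial U_n$, and then split $B_n$ into the part with $F_n^{*}(\zeta)\in\partial U_n$ and the part with $F_n^{*}(\zeta)\in U_n$. Two problems arise. First, even for the ``boundary-image'' part, the $\alpha$-harmonic measure condition does not bound $\omega(F_n(z_0),S_n,U_n)$ directly: that condition estimates $\omega(z,\partial V\cap U_n,V)$ inside a small component $V$ of $U_n\cap D(\zeta_n,r)$, not a global harmonic measure in $U_n$; you need the intermediate maximum-principle (or Brownian exit) step, passing through the crossing arcs $\Gamma_n(r)=\partial V\cap U_n$, before the condition applies. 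That step is standard but should be made explicit, especially since it is the place where $\Gamma_n(r)$ enters. Second, and more seriously, the ``interior-image'' part is not handled. Your explicit harmonic majorant $h_n(w)=\tfrac12(1-\Re(\overline{\lambda_n}w))$ works when $U_n=\D$ because you have an explicit global harmonic function on the target that is uniformly bounded below away from the point $F_n(z_0)$ and small at $F_n(z_0)$; for a general simply connected $U_n$ no such global majorant is supplied by the $\alpha$-harmonic measure condition, which is a purely local statement about crossing arcs. Saying ``a companion estimate built on the same condition controls'' this part asserts exactly what needs to be proved. One can indeed build an appropriate superharmonic barrier on $U_n$ (equal to $1$ off $V_n(r/2)$ and to $\omega(\cdot,\Gamma_n(r/2),V_n(r/2))$ inside it, composed with $F_n$), but that is nontrivial extra work and not what you wrote.

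The paper avoids this split entirely by applying the expansion theorem not to $F_n:U\to U_n$ but to the restricted map $F_n:W_n(r)\to V_n(r)$, where $V_n(r)$ is the component of $U_n\cap D(\zeta_n,r)$ containing $F_n(0)$ and $W_n(r)$ is the component of $F_n^{-1}(V_n(r))\cap\D$ containing $0$. Brownian motion in $\D$ starting at $0$ can reach a boundary point $\zeta$ with $F_n^{*}(\zeta)\in\overline{U_n\setminus V_n(r)}$ (whether that image lies in $U_n$ or on $\partial U_n$) only by first crossing $\partial W_n(r)\cap\D$, and the expansion theorem in $W_n(r)$ then hands off the estimate to $\omega(F_n(0),\Gamma_n(r),V_n(r))$, where the $\alpha$-harmonic measure condition is in force. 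The price the paper pays, and that your version sidesteps, is that one has to check $W_n(r)$ is a Jordan domain with the unique accessibility property so that $F_n\vert_{W_n(r)}$ has a full radial extension; this is where MacLane's theorem on level curves of holomorphic functions is invoked. So your route trades MacLane's theorem for an unproved interior-image estimate, and as written the proposal stops short of a proof at that point. Everything else --- the reduction to $E(r)$, the Borel measurability via Theorem~\ref{thm:PommSect6.2}, the Borel--Cantelli conclusion --- is correct and matches the paper.
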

\begin{rems*}\label{rem:Thm4.1}
1.\;The estimate \eqref{eucl-hyp} in the statement of Theorem~\ref{thm:A+} shows that condition~\eqref{eq:general1} is independent of the choice of  $z_0\in U$.

2.\;If we replaced \eqref{eq:general1} by the hypothesis that $\operatorname{dist}(F_n(z_0),\partial U_n)\to 0$ as $n\to\infty$, then we would  obtain the same result for  any subsequence $(F_{n_k})$ for which the corresponding series in \eqref{eq:general1} is convergent.
\end{rems*}
We give a formal definition of the $\alpha$-harmonic measure condition at the beginning of Subsection~\ref{geometric}. This is a geometric condition on the boundaries of the domains which determines the rate of convergence required in~\eqref{eq:general1}.

We shall see that a domain with $C^{2}$ boundary, such as the unit disc, satisfies the $\alpha$-harmonic measure condition with $\alpha=1$.   On the other hand, a general  simply connected domain~$U$, under no assumptions on the regularity of  its boundary, satisfies an $\alpha$-harmonic measure condition with $\alpha=1/2$. Hence
\begin{itemize}
\item[(a)] Theorem~\ref{thm:DM}, part (a) is a special case of Theorem \ref{thm:general result}, with $U=U_n= \mathbb{D}$ and $F_n=f^n$, where $f$ is an inner function, for $n \in \mathbb{N}$;
\item[(b)] Theorem B is a  special case of Theorem \ref{thm:general result},   with $U_n= \mathbb{D}$ and $F_n:\D\ra\D$  holomorpic self maps of the disc, for $n \in \mathbb{N}$;
\item[(c)]Theorem C follows from Theorem \ref{thm:general result} if we make no assumptions on the geometric nature of the domains and take  $\alpha=\frac{1}{2}$ in (\ref{eq:general1}).
\end{itemize}

In Section~\ref{cardioid} we give an example which shows that, in one sense, Theorem~\ref{thm:general result} is best possible. For this example, where the domains are all cardioids, the conclusion holds if we take $\alpha = 1/2$ in~\eqref{eq:general1}, but does not hold for any larger value of $\alpha$.

\subsection{Domains with the $\alpha$-harmonic measure condition}\label{geometric}

We begin with a formal definition and discussion of our geometric condition.

\begin{defn}\label{def-alpha}
Let $\frac{1}{2}\leq\alpha\leq1$. A simply connected domain $U$ satisfies an $\alpha$\emph{-harmonic measure condition} if  there exists a function $C(r)>0$, $r>0$, such that, for every $\zeta\in\partial U$ and $r>0$, we have that
\begin{equation}\label{eqtn:alpha harmonic measure}
\omega(z,\partial V\cap U,V)\leq C(r)|z-\zeta|^\alpha,\;\text{ for }z\in U\cap D(\zeta,r),
\end{equation}
where~$V$ is the component of $U\cap D(\zeta,r)$ that contains $z$.
\end{defn}
\begin{rem*}Observe that the left-hand side of \eqref{eqtn:alpha harmonic measure} can be interpreted as the probability of a Brownian path in~$U$ starting at the point~$z$ first exiting~$V$ through $\partial V\cap U$, which tends to be larger or smaller according to the relative size of $\partial V\cap U$ in $\partial V$. As shown by our Example~\ref{ex:power}, the largest case for $\omega(z,\partial V\cap U,V)$ occurs when the boundary has an external cusp at $\zeta$.
\end{rem*}

The following geometric criteria for satisfying the  $\alpha$-harmonic measure condition are well known and we give only a brief indication of where proofs can be found.

\begin{lem}\label{lem:types of geometry} Assuming the notation of Definition~\ref{def-alpha}:
\begin{enumerate}
\item[(a)] for any simply connected domain~$U$ and $\zeta\in\partial U$, we have
\[
\omega(z,\partial V\cap U,V)\leq C(r)|z-\zeta|^{\frac{1}{2}}, \;\text{ for }z\in V,
\]
so we can take $\alpha=\tfrac12$;
\item[(b)] if every $\zeta\in\partial U$ is the vertex of a sector of angle~$\beta$, $0<\beta\leq\pi$, and side-length~$s>0$, contained in $\C\setminus U$, then
$$
\omega(z,\partial V\cap U,V)\leq C(r)|z-\zeta|^{\frac{\pi}{2\pi-\beta}}, \;\text{ for }z\in V,
$$
where $C(r)$ also depends on $\beta$ and $s$, so we can take $\alpha=\pi/(2\pi-\beta)$;
\item[(c)] if every $\zeta\in\partial U$ lies on the boundary of some disc of radius~$s>0$ contained in $\C\setminus U$, then
$$
\omega(z,\partial V\cap U,V)\leq C(r)|z-\zeta|, \;\text{ for }z\in V,
$$
where $C(r)$ also depends on $s$, so we can take $\alpha=1$.
\end{enumerate}
\end{lem}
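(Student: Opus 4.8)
The plan is to derive all three parts from a single three-step scheme, the actual estimates being entirely classical; I would cite \cite{GarnettMarshall} (for Beurling's projection theorem and for harmonic measure near a boundary point) and \cite[Chapter~4]{Pommerenke}. Fix $\zeta\in\partial U$, $r>0$ and $z\in V$, where $V$ is the component of $U\cap D(\zeta,r)$ containing~$z$. \textbf{Step 1 (topology).} Observe that $V$ is simply connected, being a component of the intersection of two simply connected planar domains, and that the ``free'' part of its boundary lies on the circle $\partial D(\zeta,r)$: if $p\in\partial V$ with $p\in U$, then $p$ cannot lie in the open set $U\cap D(\zeta,r)$, since it would then be interior to one of its components, which is either $V$ or disjoint from~$V$, both impossible; hence $p\in\partial D(\zeta,r)$. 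So it suffices to bound $\omega(z,\partial V\cap U,V)$ with $\partial V\cap U\subset\partial D(\zeta,r)$.

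\textbf{Step 2 (monotonicity).} In each case I would exhibit a connected closed ``blocking set'' $K\subset\overline{D(\zeta,r)}\setminus U$ with $\zeta\in K$ and $K\cap\partial D(\zeta,r)\neq\emptyset$, and then compare Brownian motions: a path started at $z$ that exits $V$ through $\partial V\cap U$ remains in the larger domain $D(\zeta,r)\setminus K\supset V$ until it first meets $\partial D(\zeta,r)$, and therefore
\[
\omega(z,\partial V\cap U,V)\le\omega\bigl(z,\partial D(\zeta,r),D(\zeta,r)\setminus K\bigr).
\]
\textbf{Step 3 (conformal model).} Estimate the right-hand side explicitly. After the similarity $w\mapsto(w-\zeta)/r$ normalising $D(\zeta,r)$ to $\D$ and $\zeta$ to~$0$, the domain $\D\setminus K'$ ($K'$ the image of $K$) is --- in each case, possibly after a further conformal change of variable --- equal to, or dominated by, a circular sector of opening angle $\theta\in(0,2\pi]$ with vertex at~$0$; straightening it with $w\mapsto w^{\pi/\theta}$ and using the elementary harmonic measure of the circular side of a half-disc gives
\[
\omega\bigl(z,\partial D(\zeta,r),D(\zeta,r)\setminus K\bigr)\le C(r)\,|z-\zeta|^{\pi/\theta},
\]
since the image of $z$ sits at distance comparable to $(|z-\zeta|/r)^{\pi/\theta}$ from the vertex (the regime where $|z-\zeta|$ is bounded below by a constant depending only on~$r$ being trivial).

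It then remains to choose $K$ and read off $\theta$. \emph{Part (b):} take $K=\overline{S}\cap\overline{D(\zeta,\rho)}$ with $\rho=\min(r,s)$; then $D(\zeta,\rho)\setminus K$ is a circular sector of opening $\theta=2\pi-\beta$, giving exponent $\pi/(2\pi-\beta)$, with $C(r)$ acquiring dependence on $\beta$ and~$s$ through $\rho$ and the distortion of $w\mapsto w^{\pi/(2\pi-\beta)}$. \emph{Part (c):} here $\partial U$ satisfies an exterior ball condition at $\zeta$; a M\"obius change of variable straightens $\partial D(w_0,s)$ near~$\zeta$ and reduces $D(\zeta,\rho)\setminus\overline{D(w_0,s)}$ (with $\rho=\min(r,2s)$) to a half-disc having $\zeta$ as a smooth boundary point, so $\theta=\pi$ and we obtain the linear bound $\omega\le C(r)\,\dist(z,\partial D(w_0,s))\le C(r)\,|z-\zeta|$; note that (c) does \emph{not} follow from~(b), because a disc tangent at~$\zeta$ contains sectors of every opening $\beta<\pi$ (hence every exponent below~$1$) but no genuine sector of opening~$\pi$. \emph{Part (a):} there is no geometric hypothesis, so since $U\subsetneq\C$ is simply connected, $\widehat{\C}\setminus U$ is a continuum containing both $\zeta$ and $\infty$, and by a boundary-bumping argument the component of $(\widehat{\C}\setminus U)\cap\overline{D(\zeta,r)}$ through $\zeta$ meets $\partial D(\zeta,r)$; taking this as $K$, Beurling's projection theorem bounds $\omega(z,\partial D(\zeta,r),D(\zeta,r)\setminus K)$ by the corresponding harmonic measure in the disc slit along a full radial segment, evaluated at the point at distance $|z-\zeta|$ from $\zeta$ antipodal to the slit --- the degenerate case $\theta=2\pi$ --- whence the exponent $\tfrac12$.

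The content is classical, so I expect no conceptual obstacle; the points needing genuine care in a full write-up are the topological reduction of Step~1 (simple connectedness of the components of $U\cap D(\zeta,r)$ and the inclusion $\partial V\cap U\subset\partial D(\zeta,r)$), the verification of the Brownian-motion comparison in Step~2, and --- the one place where something could go wrong --- the bookkeeping guaranteeing that the blocking set~$K$ actually reaches $\partial D(\zeta,r)$, which is precisely why one must invoke the continuum $\widehat{\C}\setminus U$ in part~(a) and the side-length~$s$ in parts~(b) and~(c). The imported black boxes are Beurling's projection theorem and the harmonic measure of a circular sector, both of which I would simply cite from \cite{GarnettMarshall}.
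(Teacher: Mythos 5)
Your proof is correct and, for parts (b) and (c), follows exactly the route the paper indicates (``standard techniques such as conformal mappings and the maximum principle''): a blocking set contained in $\C\setminus U$, a Brownian/maximum-principle comparison, and an explicit conformal model of a circular sector. For part (a) you take a genuinely different, though closely related, classical path. The paper extends $\omega(\cdot,\partial V\cap U,V)$ by zero to a subharmonic function $u$ on $D(\zeta,r)$ and applies the Milloux--Schmidt inequality (stated there as Lemma~\ref{MS}); the hypothesis $\min_{|z-\zeta|=\rho}u=0$ holds because $\C\setminus U$ is connected and unbounded, hence meets every circle about $\zeta$ --- no connected blocking set need be produced. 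You instead manufacture a continuum $K\subset\C\setminus U$ joining $\zeta$ to $\partial D(\zeta,r)$ by boundary bumping, reduce to the domain $D(\zeta,r)\setminus K$, and invoke Beurling's projection theorem to compare with the radially slit disc. Both arguments land on the same sharp $|z-\zeta|^{1/2}$ bound, since the radial slit is precisely the extremal configuration behind Milloux--Schmidt. What the paper's choice buys is a lighter topological input (a zero on each circle rather than a blocking continuum) and economy of means, as the Milloux--Schmidt inequality is reused verbatim in the proof of Theorem~E; what your Beurling route buys is a single uniform three-step scheme across (a)--(c) in which only the model sector changes, with the slit disc appearing as the degenerate $\theta=2\pi$ case. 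One small point worth keeping in a full write-up: your Step~1 claim that $V$ is simply connected should be justified (a Jordan curve in $V$ bounds a disc contained in both $U$ and $D(\zeta,r)$, hence in $U\cap D(\zeta,r)$, and this disc is connected to $\gamma$, so it lies in $V$); the paper appeals to the maximum principle for the analogous fact in the proof of Theorem~\ref{thm:general result}.
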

Property~(a) can be proved by applying the Milloux--Schmidt inequality \cite[page 289]{HaymanSF2}  to the subharmonic function
\[
u(z):=
\begin{cases}
\omega(z,\partial V\cap U,V), \quad z\in V,\\
0,\quad z\in D(\zeta,r)\setminus V;
\end{cases}
\]
we state a version of the Milloux--Schmidt inequality here, since we will use it again in Section~6.
\begin{lem}[Milloux--Schmidt inequality]\label{MS}
Suppose that~$u$ is subharmonic and continuous in $\{z:|z|\le r\}$ and that
\begin{equation}\label{min0}
\min_{|z|=\rho}u(z)=0, \;\;\text{for } 0<\rho<r.
\end{equation}
Then
\[
u(z)\le \frac{4}{\pi}\max_{|z|=r}u(z)\tan^{-1}(|z|/r)^{1/2},\;\;\text{for } 0<|z|<r.
\]
\end{lem}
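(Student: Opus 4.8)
The plan is to recognise this as a classical estimate --- essentially Milloux's inequality, with the sharp constant due to Schmidt --- so that the quickest route is simply to quote it from \cite[p.~289]{HaymanSF2}; for completeness I would also indicate a self-contained argument. After the harmless rescaling $u(z)\mapsto u(rz)$ we may take $r=1$ (here $\tan^{-1}(|z|/r)^{1/2}$ is read as $\tan^{-1}\sqrt{|z|/r}$, which equals $1$ at $|z|=r$, consistent with the maximum principle). The hypothesis $\min_{|z|=\rho}u=0$ for $0<\rho<1$ together with continuity forces $u\ge 0$ on $\overline{\D}$ and, letting $\rho\to0^{+}$ along points where $u$ vanishes, $u(0)=0$. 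Put $B=\max_{|z|=1}u$; if $B=0$ the maximum principle gives $u\equiv0$ and there is nothing to prove, so assume $B>0$, fix $z_0$ with $0<|z_0|<1$, and (the case $u(z_0)=0$ being trivial) assume $u(z_0)>0$. Let $G$ be the connected component of $\{z\in\D:u(z)>0\}$ containing $z_0$. On $G$ the function $u$ is positive subharmonic, vanishes on $\partial G\cap\D$, and is at most $B$ on $\partial G\cap\partial\D$, so the two-constants theorem yields
\[
u(z_0)\le B\,\omega\bigl(z_0,\;\partial G\cap\partial\D,\;G\bigr).
\]

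It then remains to bound this harmonic measure by $\tfrac4\pi\tan^{-1}\sqrt{|z_0|}$. The zero set $Z=\{u=0\}$ contains $0$, is disjoint from $G$, and meets every circle $|z|=\rho$ with $0\le\rho<1$, so its radial projection onto $[0,1]$ is the whole segment; in particular the inner blocking set $\partial G\setminus\partial\D$ meets every circle in the annulus $|z_0|\le|z|<1$. By Beurling's projection theorem \cite{GarnettMarshall} (radially projecting the complement onto a segment can only increase the harmonic measure of the outer boundary, and the optimal position of $z_0$ on its circle is the point antipodal to the slit), $\omega(z_0,\partial G\cap\partial\D,G)$ is at most the harmonic measure of $\{|z|=1\}$ in the slit disc $\D\setminus[0,1]$ evaluated at $-|z_0|$. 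Pushing $\D\setminus[0,1]$ to the upper half-disc by $w=\sqrt z$ and then to the upper half-plane by $\zeta=-\tfrac12(w+1/w)$ --- under which $\{|z|=1\}$ becomes the segment $[-1,1]$ and $-|z_0|$ becomes $i\,\tfrac{1-|z_0|}{2\sqrt{|z_0|}}$ --- a direct computation gives that this harmonic measure equals $1-\tfrac2\pi\tan^{-1}\!\bigl(\tfrac{1-|z_0|}{2\sqrt{|z_0|}}\bigr)=\tfrac4\pi\tan^{-1}\sqrt{|z_0|}$, the last step using $\tan(2\theta)=2\tan\theta/(1-\tan^{2}\theta)$. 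Combining with the displayed inequality and undoing the rescaling finishes the proof.

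The step I expect to be the main obstacle is the symmetrisation/projection estimate: one must invoke Beurling's projection theorem correctly, keeping in mind that the blocking set $\partial G\setminus\partial\D$ need not be connected --- only ``radially full'' in the annulus $|z_0|\le|z|<1$ --- and that the extremal configuration is the disc slit along a single radius opposite to $z_0$. The other ingredients (the normalisation, the two-constants step, and the explicit conformal evaluation of the extremal harmonic measure) are routine, and in any case the cleanest presentation in the paper is probably to cite \cite{HaymanSF2} and omit the details.
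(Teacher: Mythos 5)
You are right that the paper offers no proof of this lemma---it simply cites \cite[p.~289]{HaymanSF2}---so citing the source is exactly what the paper does, and your self-contained sketch is additional content rather than an alternative to the paper's (nonexistent) argument. The sketch itself is correct and follows the standard route to Milloux--Schmidt: normalise to $r=1$, note $u\ge 0$ and $u(0)=0$, pass to the component $G$ of $\{u>0\}$ containing a fixed $z_0$, use the two-constants theorem to reduce to bounding $\omega(z_0,\partial G\cap\partial\D,G)$, and then invoke Beurling's projection theorem together with the explicit evaluation of harmonic measure in the slit disc $\D\setminus[0,1]$, which you correctly compute via $z\mapsto\sqrt z$, $w\mapsto-\tfrac12(w+1/w)$ and the half-plane Poisson kernel to be $\tfrac{2}{\pi}\tan^{-1}\bigl(2\sqrt{x}/(1-x)\bigr)=\tfrac{4}{\pi}\tan^{-1}\sqrt{x}$ at $-x$. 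Your concern about the blocking set not being connected is unfounded: Beurling's projection theorem, as in Garnett--Marshall, applies to arbitrary closed sets, not just continua, so ``radially full'' is all that is needed. The one technicality your sketch glosses over is that the zero set $Z=\{u=0\}$, which is the natural blocking set (since $G$ is a component of $\D\setminus Z$ and $\omega(z_0,\partial G\cap\partial\D,G)=\omega(z_0,\partial\D,\D\setminus Z)$), is only relatively closed in $\D$, not compact; one should truncate to $Z\cap\{|z|\le 1-\epsilon\}$, whose circular projection is all of $[0,1-\epsilon]$ because $0\in Z$ and $Z$ meets every circle, apply Beurling there, and let $\epsilon\to 0$ using domain monotonicity of $\omega(\cdot,\partial\D,\cdot)$. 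With that small repair the argument is complete, and equality for the slit disc shows the constant is sharp.
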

Lemma~\ref{MS} shows that in Lemma~\ref{lem:types of geometry} part~(a) we can take $C(r)=4/(\pi r^{1/2})$.

Parts~(b) and~(c) of Lemma~\ref{lem:types of geometry} can be proved by applying standard techniques such as conformal mappings and the maximum principle or Ahlfors' distortion theorem. In fact, using Ahlfors' distortion theorem the values of $\alpha$ stated in~(b) and~(c) can be obtained with slightly smaller exterior domains than sectors and discs, respectively, expressed in terms of a so-called Dini condition; more details can be found in \cite[Section~3.4]{Pommerenke} and \cite[Chapter 3-3, p. 41]{AhlforsCI}.

\begin{rem*}
Using results from \cite{BT}, it is possible to construct individual wandering domains which satisfy (sharply)  any   of the geometries described in  Lemma~\ref{lem:types of geometry}.
\end{rem*}

\subsection{Proof of Theorem~\ref{thm:general result}}\label{proof-thm4.1}

We now prove our main result.  The first part of the argument has its origins in the proof of \cite[Theorem~1.1]{RS-boundaries} but the very general context here, with maps not being surjective, introduces significant new challenges; see also \cite[Lemma~4.1]{Dave-John} for a related generalisation of \cite[Theorem~1.1]{RS-boundaries}.

\begin{proof}[Proof of Theorem \ref{thm:general result}]
We claim that we need only prove the result in the case that $U=\D$ and $z_0=0$. Indeed, if we know that the result holds for a sequence of the form $\hat F_n=F_n\circ \phi_U:\D \to U_n$, where $\phi_U$ is a Riemann map from $\D$ onto $U$ with $\phi_U(0)=z_0$, then the result holds for $F_n:U\to U_n$ since almost every $\xi\in\partial \D$ has the properties that the radial limit $\zeta:=\phi_U(\xi)$ exists and the full radial extension $F_n(\zeta)$ exists (by hypothesis and the invariance of harmonic measure under a conformal map), so
\[
F_n(\zeta)-F_n(z_0)=\hat F_n(\xi)-\hat F_n(0) \to 0\;\text{ as } n\to \infty,
\]
for almost every $\xi\in\partial \D$ and hence for almost every $\zeta\in \partial U$, with respect to harmonic measure.

So, from now on we assume that $U=\D$ and $z_0=0$. We have to show that for almost all $\zeta\in\partial \D$ we have $F_n(\zeta)-F_n(0)\to 0$ as $ n\to \infty$. For each $n\in\N$, let $\zeta_n$ be a point in $\partial U_n$ that satisfies
$$
|F_n(0) - \zeta_n|= \operatorname{dist}(F_n(0), \partial U_n).
$$
By hypothesis, $F_n(0) - \zeta_n\to 0$ as $n\to\infty$.

For every $r>0$, we define the sequence of  open sets $S_n(r)= U_n \cap D(\zeta_n, r)$, for $n\in\N$. We then define the set $E(r)$ of points $\zeta \in \partial \D$ such that $F_n(\zeta)$ eventually lies in $\ov{S_n(r)}$, that is,
\[
E(r)= \bigcup_{N\geq 0} \bigcap_{n\geq N} \left(F_n^{-1}\left(\ov{S_n(r)}\right) \cap \partial \D\right).
\]
Notice that  by the definition of $S_n(r)$ we have
\begin{equation}\label{eq:intersection}
|F_n(\zeta)-\zeta_n|\ra0 \text{ as $n\ra\infty$,\quad  for } \zeta\in \bigcap_{r>0} E(r),
\end{equation}
and, since the sets $E(r)$ decrease as $r$ decreases, for any  sequence of radii $r_k\to 0$ as $k\to 0$ we have $\bigcap_{r>0} E(r)=\bigcap_{k\ge 1} E(r_k)$.  To prove our result it is sufficient to show that the latter set has full harmonic measure in $\partial \D$.

First note that $\ov{U_n}\setminus \ov{S_n(r)}\subset \ov{U_n\setminus S_n(r)}$, so
\begin{equation}\label{eqtn:N}
\partial \D \setminus E(r)\subset\bigcap_{N\geq 0} \bigcup_{n\geq N} (F_n^{-1}(\ov{U_n\setminus S_n(r)})\cap \partial \D).
\end{equation}
We shall use \eqref{eqtn:N} to show that
\begin{equation}\label{eq:zero  measure N}
\omega(0,\partial \D \setminus E(r), \D)=0, \quad\text{for } r>0.
\end{equation}

To prove this, we take $N(r)\in \N$ so large that $F_n(0)\in S_n(r)$ for all $n\ge N(r)$, and then for all $n\ge N(r)$ we define the following sets, shown in Figure~\ref{fig:Pom-anal}:
\begin{itemize}
\item
$V_n(r)$ is the component of $S_n(r)= U_n \cap D(\zeta_n, r)$ that contains $F_n(0)$,
\item
$W_n(r)$ is the component of $F_n^{-1}(V_n(r))\cap \D$ that contains $0$, and
\item
$\Gamma_n(r) := \partial V_n(r)\cap U_n$.
\end{itemize}

\begin{figure}[hbt!]
\begin{center}
\def\svgwidth{0.8\textwidth}
\begingroup%
  \makeatletter%
    \setlength{\unitlength}{\svgwidth}%
  \makeatother%
  \begin{picture}(1,0.49655153)%
     \put(0,0){\includegraphics[width=\unitlength]{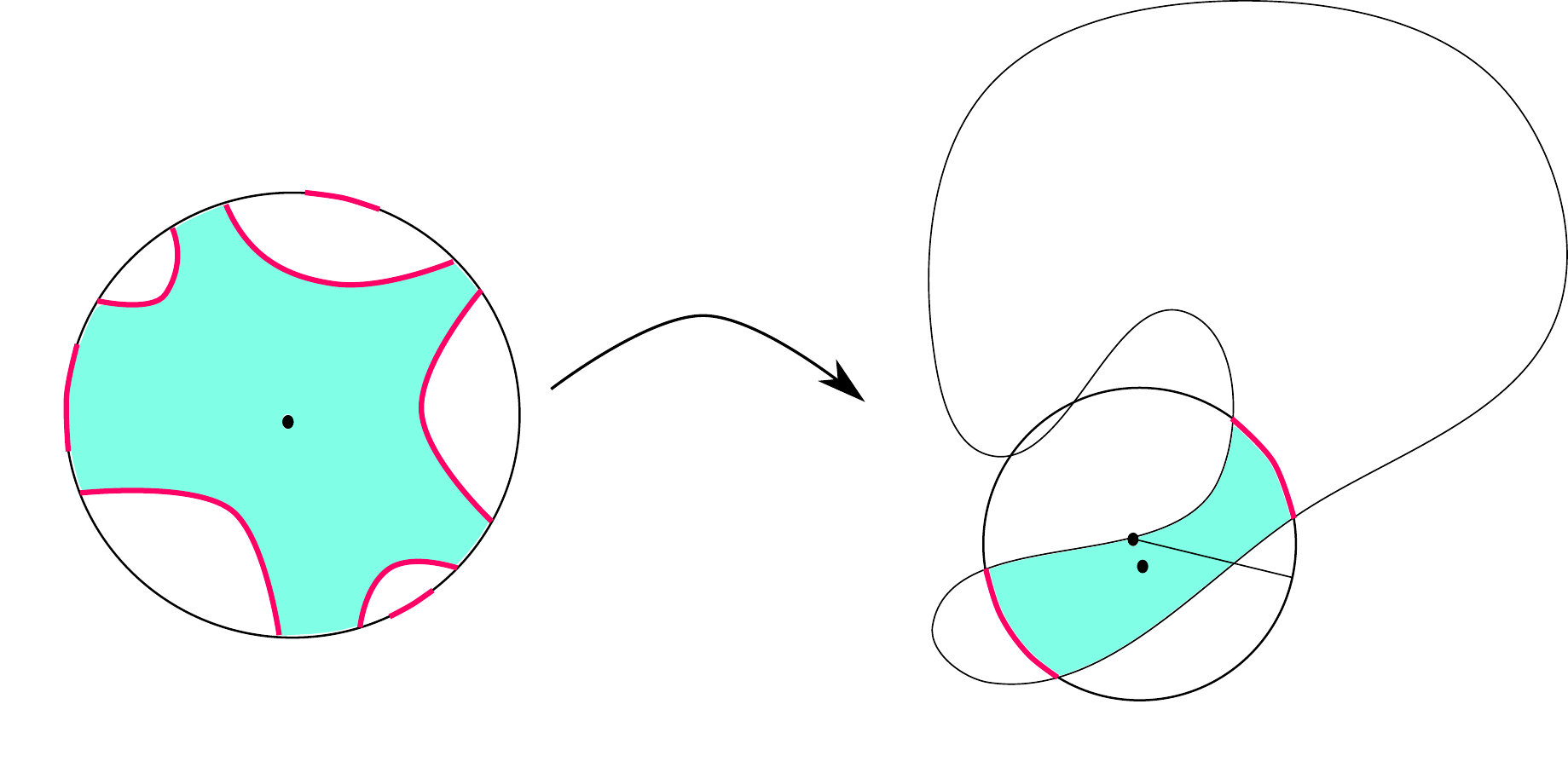}}%
    \put(0.178,0.052){\color[rgb]{0,0,0}\makebox(0,0)[lb]{\small{$\D$}}}%
    \put(0.72487243,0.00324223){\color[rgb]{0,0,0}\makebox(0,0)[lb]{\small{$U_n$}}}%
    \put(0.19451444,0.22518553){\color[rgb]{0,0,0}\makebox(0,0)[lb]{\small{$0$}}}%
    \put(0.08354279,0.25545052){\color[rgb]{0,0,0}\makebox(0,0)[lb]{\small{$W_n(r)$}}}%
    \put(0.65857768,0.10556674){\color[rgb]{0,0,0}\makebox(0,0)[lb]{\small{$F_n(0)$}}}%
    \put(0.8,0.11){\color[rgb]{0,0,0}\makebox(0,0)[lb]{\small{$r$}}}%
    \put(0.68884268,0.35777503){\color[rgb]{0,0,0}\makebox(0,0)[lb]{\small{$U_n\setminus V_n(r)$}}}%
    \put(0.75,0.15){\color[rgb]{0,0,0}\makebox(0,0)[lb]{\small{$V_n(r)$}}}%
    \put(0.815,0.2){\color[rgb]{0,0,0}\makebox(0,0)[lb]{\small{$\Gamma_n(r)$}}}%
    \put(0.69,0.155){\color[rgb]{0,0,0}\makebox(0,0)[lb]{\small{$\zeta_n$}}}%
    \put(0.43080615,0.30759923){\color[rgb]{0,0,0}\makebox(0,0)[lb]{\small{$F_n$}}}%
  \end{picture}%
\endgroup
\end{center}
\caption{\label{fig:Pom-anal} \small \label{fig:main} Illustration  of the proof of Theorem \ref{thm:general result}. The set $V_n(r)$ and  the connected component $W_n(r)$ of its preimage  which contains $0$ are shaded (seagreen online). The arcs $\Gamma_n(r)$ and some of their preimages are drawn thicker (pink online). Notice that there may also be preimages of $\Gamma_n(r)$  in $\partial\D$, since iterates of points in $\partial \D$ may fall in the interior of $U_n$.   Points in  $\D\setminus W_n(r)$, which are shown in white, may be mapped under $F_n$ to  either $U_n\setminus V_n(r)$ or $V_n(r)$, since the preimage of $V_n(r)$ under $F_n$ may have more than one connected component.   Finally, recall that the points $(\zeta_n)_{n\in\N}$  do not form an orbit.}
\end{figure}

Note that $V_n(r)$ and $W_n(r)$ are both simply connected domains, by the maximum principle, and $\Gamma_n(r)$ consists of arcs of $\partial D(\zeta_n,r)$. Moreover, $W_n(r)$ is a Jordan domain since the preimage of $\Gamma_n(r)$ in $\D$ consists of at most countably many curves that accumulate at no point of~$\D$, and the ends of each such preimage curve consist of single points of $\partial \D$.

The latter property follows from a theorem of G.~MacLane \cite[Theorem~1]{MacLane}, applied here to the function $z\mapsto F_n(z)-\zeta_n$, which has finite radial limits almost everywhere on $\partial \D$. MacLane's theorem states that the level curves of any function holomorphic in~$\D$ accumulate at singleton points of $\partial \D$ (rather than at arcs of $\partial \D$) if and only if the function has asymptotic values at all points of a dense subset of $\partial \D$, which is certainly the case for the function $z\mapsto F_n(z)-\zeta_n$.

In particular, $W_n(r)$ has the unique accessibility property and so the restricted map $F_n:W_n(r)\to V_n(r)$ has full radial extension;  see Proposition~\ref{simplecases}.

We now apply Theorem~\ref{Cor:expansion} to the map $F_n:W_n(r)\to V_n(r)$ to deduce that
\begin{equation}\label{lowner1}
\omega(0, F_n^{-1}(\Gamma_n(r))\cap\partial W_n(r),W_n(r))\le \omega(F_n(0), \Gamma_n(r),V_n(r)).
\end{equation}
Next we deduce from the fact that $\overline{V_n(r)}\subset \overline{S_n(r)}$ that, for all $z\in \D$,
\begin{equation}\label{subset}
\omega(z, F_n^{-1}(\overline{U_n\setminus S_n(r)})\cap \partial \D, \D) \le \omega(z, F_n^{-1}(\overline{U_n\setminus V_n(r)})\cap \partial \D, \D).
\end{equation}
Using the interpretation of harmonic measure as the exit distribution of Brownian motion, it is clear that, for all $z\in W_n(r)$, we have
\begin{equation}\label{max-pr}
\omega(z, F_n^{-1}(\overline{U_n\setminus V_n(r)})\cap \partial \D, \D)
 \le \omega(z, F_n^{-1}(\Gamma_n(r))\cap\partial W_n(r),W_n(r)),
\end{equation}
since any Brownian path in $\D$ originating at 0 can only exit $\D$ at a point of $F_n^{-1}(\overline{U_n\setminus V_n(r)})\cap\partial \D$ by first exiting $W_n(r)$ at a point of $F_n^{-1}(\Gamma_n(r))\cap\partial W_n(r)$.

Alternatively, we can apply the maximum principle in the Jordan domain $W_n(r)$ to justify \eqref{max-pr}, by arguing that this inequality holds everywhere on $\partial W_n(r)$, apart from at most countably many points. Indeed, apart from the endpoints of the preimage curves of $\Gamma_n(r)$, these two bounded positive harmonic functions in $W_n(r)$ have the same radial boundary extensions everywhere on $\partial W_n(r)\cap \partial \D$ and the positive harmonic function on the right takes the value~1 everywhere on $F_n^{-1}(\Gamma_n(r))\cap\partial W_n(r) =\partial W_n(r)\cap \D$.

Using the inequalities (\ref{eqtn:N}), \eqref{subset}, \eqref{max-pr} and \eqref{lowner1}, in this order, we deduce that
\begin{eqnarray}\label{eq:main thm}
\omega(0,\partial \D \setminus E(r),\D)
&\leq& \inf_{N\geq N(r)} \sum_{n\geq N} \omega(0,F_n^{-1}\left(\overline{U_n\setminus S_n(r)}\right)\cap \partial \D,\D) \nonumber \\
&\leq& \inf_{N\geq N(r)} \sum_{n\geq N} \omega(0,F_n^{-1}\left(\overline{U_n\setminus V_n(r)}\right)\cap \partial \D,\D) \nonumber \\
&\leq & \inf_{N\geq N(r)} \sum_{n\geq N}\omega(0, F_n^{-1}(\Gamma_n(r))\cap\partial W_n(r),W_n(r))\notag\\
&\leq & \inf_{N\geq N(r)} \sum_{n\geq N} \omega(F_n(0),\Gamma_n(r), V_n(r)).
\end{eqnarray}

Since the domains $U_n$ are all assumed to satisfy the $\alpha$-harmonic measure condition, we deduce from (\ref{eqtn:alpha harmonic measure}) that
$$
\omega(F_n(0), \Gamma_n(r), V_n(r)) \leq C(r)|F_n(0)-\zeta_n|^{\alpha},
$$
for some positive function $C(r), r>0$.
Hence, by \eqref{eq:general1}, the right-hand side of \eqref{eq:main thm} is equal to~0 for every~$r>0$ and \eqref{eq:zero  measure N} follows.

Since a countable union of sets of zero measure has zero measure, we deduce from \eqref{eq:zero  measure N} that for any  sequence of radii $r_k\to 0$ as $k\to \infty$
\[
 \omega(0, \bigcup_{k\ge 1}(\partial \D \setminus E(r_k)), \D)=0.
\]
Therefore, by (\ref{eq:intersection}), we have $|F_n(\zeta)-\zeta_n|\ra0$ as $n \to \infty$ for $\zeta$ in  a subset of $\partial \D$ of full harmonic measure, namely, $\bigcap_{k\ge 1} E(r_k)$.

Since $|\zeta_n-F_n(0)|\ra 0$ by assumption, this  implies that
\[
|F_n(\zeta)-F_n(0)|\leq |F_n(\zeta)-\zeta_n|+|\zeta_n-F_n(0)|\ra0\;\text{ as } n\to \infty,
\]
for $\zeta$ in a set of full harmonic measure, as claimed.
\end{proof}

\begin{rem*}
It can be seen from the proof of Theorem~\ref{thm:general result} that if the domains $U_n$ satisfy an $\alpha$-harmonic measure condition, $1/2 \le \alpha \le 1$, but the orbit $(F_n(0))$ actually approaches parts of the boundaries of the $U_n$ that satisfy a $\beta$-harmonic measure condition, where $\beta>\alpha$, then the convergence condition \eqref{eq:general1} can be weakened to replace the exponent $\alpha$ by $\beta$.
\end{rem*}

\subsection{Example of self-maps of a cardioid domain}\label{cardioid}
We give an example which has the property that in Theorem~\ref{thm:general result} the exponent in condition \eqref{eq:general1} cannot be taken to be greater than~$\frac12$. We consider holomorphic self-maps of the domain
\[
U=\phi(\D),\quad\text{where } \phi(z)=(z-1)^2,
\]
which is the conformal image of~$\D$ under $\phi$, meeting the real axis in the interval $\phi((-1,1))=(0,4)$, with $\partial U$ a cardioid having its inward pointing cusp at $\phi(1)=0$.

By part~(a) of  Lemma~\ref{lem:types of geometry} the domain~$U$ satisfies an $\alpha$-harmonic measure condition for $\alpha=\frac12$. On the other hand,~$U$ does not satisfy such a condition for any $\alpha >\frac12$. Indeed, we can check that the harmonic measure $\omega(z, \partial V\cap U, V)$, where $V=U\cap D(0,r), r>0$, satisfies
\[
\omega(x, \partial V\cap U, V)\sim C(r)x^{1/2}\;\ \text{ as }x \to 0+,
\]
for some constant $C(r)>0$, by using $\phi^{-1}$ to map this harmonic measure to a boundary neighbourhood of $1$ in~$\D$ within which the corresponding harmonic measure behaves near $1$ like a multiple of the distance to $\partial \D$.

\begin{ex}\label{ex:power}
There exists a sequence of holomorphic maps $F_n: U \to U$ such that
\begin{itemize}
\item[(a)] $F_n(1) \to 0 \in \partial U$ as $n \to \infty$ (note that $1=\phi(0)\in U$);
\item[(b)] $\sum\operatorname{dist}(F_n(1),\partial U)^{\alpha}
\begin{cases}
< \infty,\quad \text{for } \alpha>1/2,\\
= \infty,\quad \text{for } \alpha=1/2;
\end{cases} $

\item[(c)] $\{\zeta \in \partial U: F_n(\zeta) \to 0\;\text{as}\;n\to \infty\}$ has measure 0; that is, the Denjoy--Wolff set has measure zero.
\end{itemize}
\end{ex}

\begin{proof}Let $B_n$ be the  sequence of Blaschke products defined as $B_n(z)= M_n(z^{2^n})$, with $M_n(z)= \frac{z+1-1/{n}}{1+(1-1/n)z}$. We then have that
\begin{equation}\label{eq:sharp1}
B_n(0)=1 - \frac{1}{n}\ra1\;  \text{ as } n\ra\infty,
\end{equation}
and
\begin{equation}\label{eq:sharp2}
B_n(\zeta) \nrightarrow 1\;\;\text{as } n \to \infty,\quad \text{for almost every } \zeta \in \partial \mathbb{D},
\end{equation}
where (\ref{eq:sharp2}) follows by applying Example~\ref{ex:z2} with $a_n=1-1/n, n\in\N$, according to which  the orbit of almost every point $\zeta \in \partial \mathbb{D}$  is dense in $\partial\D$.

Now define the self-maps $F_n:U\ra U$ as
\[ F_n= \phi \circ B_n \circ \phi^{-1}.\]
Then, by \eqref{eq:sharp1} and the fact that $\phi(0)=1$, we have
\begin{equation}\label{eq:to1/4}
F_n(1)= \phi (B_n(0))= \phi (1-1/n)=1/n^2 \to 0\;\text{ as } n\to\infty,
\end{equation}
which gives property~(a) and also property~(b), since $\operatorname{dist}(F_n(1),\partial U)= 1/n^2$ for $n\in\N$. Finally, property~(c) holds since $F_n(\zeta) \nrightarrow 0$ as $n\to \infty$ for almost all $\zeta \in \partial U$, by \eqref{eq:sharp2}.
\end{proof}
Note that we can write the maps $F_n:U \to U$ as a forward composition $F_n = f_n \circ \cdots \circ f_1$, where $f_n=\phi \circ b_n \circ \phi^{-1}$ and $b_n$ are the maps given in Example~\ref{ex:z2} such that $B_n=b_n \circ \cdots \circ b_1$.

\section{Empty Denjoy--Wolff sets -- the proof of Theorem D}\label{sect:no DW orbits}
In this section we prove Theorem D. In the proof we need the following basic estimate for the mapping properties of M\"obius transformations, which is also used in Section \ref{sect:examples}.
\begin{lem}[Preimages under M\"obius maps]\label{lem:preimages under Moebius maps}
Consider a M\"obius map of the form
$$M_a(z)=\frac{z+a}{1+az}, \ 0<a<1,\ a=1-\epsilon,$$
and let $T\subset \partial\D$ be an arc $(e^{i\theta},e^{i\phi} )$, with $0<\theta<\phi<2\pi$.
Then there exists $c(\theta,\phi)>0$ such that
\begin{equation}
|M_a^{-1}(T)|\sim c(\theta,\phi) \epsilon\ \  \text{ as $\epsilon\ra 0$}.
\end{equation}
\end{lem}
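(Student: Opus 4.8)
The plan is to compute $M_a^{-1}$ explicitly and then estimate the arc length of the preimage directly. Since $M_a(z)=\dfrac{z+a}{1+az}$ is an involution-type Möbius map fixing $1$ and $-1$, its inverse is $M_a^{-1}(w)=M_{-a}(w)=\dfrac{w-a}{1-aw}$. First I would write the endpoints $e^{i\theta}, e^{i\phi}$ as $w$ and compute $M_a^{-1}(e^{i\psi})$ for a generic angle $\psi$, obtaining the corresponding angle $\Psi=\Psi(\psi,a)$ on $\partial\D$. The key computational step is to find the derivative of the angular map $\psi\mapsto\Psi$ at $\epsilon=0$ (i.e. at $a=1$), or equivalently to compute $|M_a^{-1}(T)|=\int_\theta^\phi \left|\dfrac{d\Psi}{d\psi}\right|\,d\psi$ and expand in $\epsilon$.

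The cleanest route is to use the standard fact that for a Möbius self-map of $\D$ the push-forward of normalized Lebesgue measure on $\partial\D$ is given by the Poisson kernel: if $b=M_a^{-1}(0)=-a$, then for an arc $T$,
\[
|M_a^{-1}(T)| = \frac{1}{2\pi}\int_T \frac{1-|b|^2}{|e^{i\psi}-b|^2}\,d\psi = \frac{1}{2\pi}\int_\theta^\phi \frac{1-a^2}{|e^{i\psi}+a|^2}\,d\psi.
\]
Here $1-a^2 = (1-a)(1+a) = \epsilon(2-\epsilon)\sim 2\epsilon$ as $\epsilon\to0$, and $|e^{i\psi}+a|^2 = 1+2a\cos\psi+a^2 \to 2(1+\cos\psi) = 4\cos^2(\psi/2)$ as $a\to1$. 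Since $0<\theta<\phi<2\pi$ and $T$ is a fixed closed arc bounded away from the fixed point $-1$ (corresponding to $\psi=\pi$, where $\cos^2(\psi/2)$ vanishes), the integrand $1/|e^{i\psi}+a|^2$ converges uniformly on $[\theta,\phi]$ to $1/(4\cos^2(\psi/2))$, which is continuous and bounded there. Hence
\[
|M_a^{-1}(T)| \sim \frac{2\epsilon}{2\pi}\int_\theta^\phi \frac{d\psi}{4\cos^2(\psi/2)} = c(\theta,\phi)\,\epsilon \quad\text{as }\epsilon\to0,
\]
with $c(\theta,\phi) = \dfrac{1}{4\pi}\int_\theta^\phi \sec^2(\psi/2)\,d\psi = \dfrac{1}{2\pi}\big(\tan(\phi/2)-\tan(\theta/2)\big) > 0$.

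The only genuine subtlety — and the step I would be most careful about — is the uniform convergence of the integrand on $[\theta,\phi]$: this requires that the arc $T$ not contain or abut the point $-1$, where the Poisson kernel with pole approaching $-a\to-1$ blows up. The hypothesis $0<\theta<\phi<2\pi$ precisely ensures $\pi\notin$ is not forced, but if $\pi\in[\theta,\phi]$ one must note that $-1$ is still an interior point of $\partial\D\setminus\{$pole$\}$ only in the limit; in fact the pole is at $-a$ with $a<1$, so the integrand is finite for every $\epsilon>0$, and since $T$ is a proper subarc (its closure omits the single point that the pole approaches only if $T\not\ni -1$), dominated convergence still applies as long as $\overline T\not\ni -1$. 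I would therefore either (i) assume implicitly, as the statement's hypotheses suggest via $\theta>0$ and $\phi<2\pi$ read modulo the convention that $1\in\partial\D$ is the fixed point, that $T$ is bounded away from $-1$, or (ii) handle the case $-1\in\overline T$ separately by splitting the integral near $\psi=\pi$ and checking the contribution there is still $O(\epsilon)$ via the substitution $\psi=\pi+u$, $1+2a\cos\psi+a^2 = (1-a)^2 + 2a(1-\cos u)\sim\epsilon^2+u^2$, giving $\int \epsilon/(\epsilon^2+u^2)\,du = O(1)$ over a fixed neighbourhood — wait, that is $O(1)$, not $o(1)$, so in that case the asymptotic constant would pick up an extra bounded term; hence the lemma as stated genuinely needs $-1\notin\overline T$, which I would make explicit. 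Modulo this clarification the proof is the short Poisson-kernel computation above.
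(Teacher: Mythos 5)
Your Poisson-kernel strategy is a perfectly sensible alternative to the paper's argument, but it contains a sign error that led you to a false conclusion about the lemma. The density of the pushforward $(M_a)_*\lambda$, i.e.\ the Radon--Nikodym derivative of $\lambda\circ M_a^{-1}$ with respect to $\lambda$, is the Poisson kernel with pole at $M_a(0)=a$, \emph{not} at $M_a^{-1}(0)=-a$. (One way to see this: $\lambda(M_a^{-1}(T))=\omega(0,M_a^{-1}(T),\D)=\omega(M_a(0),T,\D)$ by conformal invariance of harmonic measure; alternatively, compute $|(M_a^{-1})'(e^{i\psi})|=\dfrac{1-a^2}{|1-ae^{i\psi}|^2}=\dfrac{1-a^2}{|e^{i\psi}-a|^2}$, using $a\in\R$.) The formula you wrote down, with pole at $-a$, is the density of $(M_a^{-1})_*\lambda$ and therefore computes $|M_a(T)|$, not $|M_a^{-1}(T)|$.

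This matters because it flips the location of the singularity. With the correct kernel $\dfrac{1-a^2}{|e^{i\psi}-a|^2}$, the integrand blows up, as $\epsilon\to0$, only near $\psi=0$ (mod $2\pi$), i.e.\ near the attracting fixed point $1$. The hypothesis $0<\theta<\phi<2\pi$ guarantees precisely that $1\notin\overline T$, so dominated convergence applies with no further assumption and
\[
|M_a^{-1}(T)|\sim 2\epsilon\int_\theta^\phi\frac{d\psi}{4\sin^2(\psi/2)}
=\bigl(\cot(\theta/2)-\cot(\phi/2)\bigr)\,\epsilon,
\]
which is the paper's constant $\dfrac{2|e^{i\phi}-e^{i\theta}|}{|(1-e^{i\phi})(1-e^{i\theta})|}$ after a trigonometric identity. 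In particular your closing remark --- that the lemma ``genuinely needs $-1\notin\overline T$'' --- is incorrect; the lemma is true as stated, with $\pi\in(\theta,\phi)$ allowed. Your formula $c=\tfrac{1}{2\pi}(\tan(\phi/2)-\tan(\theta/2))$ is also wrong (it even diverges at $\phi=\pi$, which the hypotheses permit); it should involve $\cot$, not $\tan$.

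For comparison, the paper's proof is shorter and sidesteps the change-of-variables question entirely: it writes $M_a^{-1}(z)-M_a^{-1}(w)=\dfrac{(1-a^2)(z-w)}{(1-az)(1-aw)}$, specialises to the two endpoints of $T$, reads off the asymptotic chordal distance $\sim c(\theta,\phi)\epsilon$, and observes that for the small preimage arc (both endpoints tend to the repelling fixed point $-1$) arc length is asymptotic to chordal distance. Your route is fine once the pole is corrected, but the endpoint-distance computation avoids any possibility of this confusion.
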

\begin{proof}
The proof is a direct computation.  For $z,w\in \partial\D$ we have that
$$
M_a^{-1}(z)-M_a^{-1}(w)=\frac{z-a}{1-az}-\frac{w-a}{1-aw}=\frac{(1-a^2)(z-w)}{(1-az)(1-aw)},
$$
so
\begin{align*}
|M_a^{-1}(e^{i\phi})-M_a^{-1}(e^{i\theta})| &=\frac{(1-a^2)|e^{i\phi}-e^{i\theta}|}{|(1-ae^{i\phi})(1-ae^{i\theta})|}\\
&=\frac{\epsilon(2-\epsilon)|e^{i\phi}-e^{i\theta}|}{(|1-e^{i\phi}|+O(\epsilon))(|1-e^{i\theta}|+O(\epsilon))}\\
&=\left(\frac{2|e^{i\phi}-e^{i\theta}|}{|(1-e^{i\phi})(1-e^{i\theta})|}\right) \epsilon+O(\epsilon^2) \;\text{ as } \epsilon\ra 0.
\end{align*}
Since $|M_a^{-1}(T)|\sim |M_a^{-1}(e^{i\phi})-M_a^{-1}(e^{i\theta})|$ as $\epsilon\ra0$, the claim follows.
\end{proof}

\begin{proof}[Proof of Theorem D]

Recall that
\[
F_n(z)=\frac{\lambda_n z+a_n}{1+\lambda_n a_n z},\;\text{ for } n\in\N,
\]
where $(a_n)$ is an increasing sequence in $[0,1)$, with $a_0=0$ and $a_n\to 1$ as $n \to \infty$, and $(\lambda_n)$ is a sequence in $\partial \D$ to be chosen. It is clear that $F_n(0)=a_n$ for $n\in \N$, whatever choice is made of $\lambda_n$, so
\[
F_n(z) \to 1\;\text{ as } n\to \infty, \text{ for all } z\in \D,
\]
by Theorem~A.

We now put $\varepsilon_n=1-a_n$ for $n\in\N$, and assume that
\begin{equation}\label{sum-inf}
\sum_{n=1}^{\infty}(1-a_n)= \sum_{n=1}^{\infty}\varepsilon_n =\infty.
\end{equation}
Then we write
\[
F_n(z)=B_n(\lambda_n z),\quad\text{where}\quad B_n(z)=\frac{z+a_n}{1+a_n z},\;\text{ for } n\in\N.
\]
Let $S=(e^{-i\theta},e^{i\theta})$ be any arc, where $0<\theta <\pi/4$. We show how to choose $(\lambda_n)$ so that no $\zeta\in\partial \D$ has an orbit that eventually lies in~$S$. We have that $B_n^{-1}(S^c)$ is an arc in $\partial \D$ with centre $-1$ and it follows by Lemma~\ref{lem:preimages under Moebius maps} that
\begin{equation}\label{eq:sim-epsn}
|B_n^{-1}(S^c)| \sim c(S)\varepsilon_n\;\text{ as }n\to \infty.
\end{equation}

Therefore, for any choice of $(\lambda_n)$, the preimage $F_n^{-1}(S^c)$ is an arc with centre $-\lambda_n^{-1}$ and with $|F_n^{-1}(S^c)|\sim c(S)\varepsilon_n$ as $n\to\infty$. We introduce intervals $I_n=[\theta_n,\theta_{n+1}], n=0,1,\dots$, such that $\theta_0=0$ and
\[
\theta_{n+1}-\theta_n=|B_n^{-1}(S^c)|,\quad \text{so} \quad \bigcup_{n=0}^{\infty}I_n=[0,\infty),
\]
by \eqref{sum-inf} and \eqref{eq:sim-epsn}.

Now, we choose $\lambda_n \in \partial \mathbb{D}$ such that
\[
-\lambda_n^{-1}=e^{i(\theta_n+\theta_{n+1})/2}.
\]
With this choice we know that if $\theta_n \leq \theta \leq \theta_{n+1}$, then
\[e^{i\theta} \in F_n^{-1}(S^c),\quad\text{so}\quad \lambda_ne^{i\theta} \in B_n^{-1}(S^c)\]
and hence $B_n(\lambda_n e^{i\theta}) \in S^c$. But any $\zeta \in \partial \mathbb{D}$  (including $\zeta=1$) is of the form $\zeta=e^{i\phi_n}$, where $\phi_n \in I_n$ for infinitely many~$n$, so
\[F_n(\zeta)= B_n(\lambda_n e^{i\phi_n}) \in S^c,\quad \text{for such } n.\]
Hence with this choice of $\lambda_n$ we have $F_n(\zeta) \nrightarrow 1$ as $n \to \infty$, as required.
\end{proof}

\begin{rem*}
A small modification to the example in Theorem~D gives a sequence of degree~$p$ Blaschke products, where $p\ge 2$, of the form
\[
F_n(z)= \frac{\lambda_n z^p+a_n}{1+\lambda_n a_n z^p},\quad n\in \N,
\]
such that $F_n(z) \to 1$ as $n\to\infty$ for all $z\in\D$, but the Denjoy--Wolff set of $(F_n)$ is empty. However, note that, as opposed to the M\"obius example of Theorem D,  the sequence $(F_n)$ cannot be written as a forward composition sequence $F_n=f_n\circ \cdots \circ f_1$ where the $f_n$ are self-maps of the unit disc.
\end{rem*}


\section{Proof of Theorem E}

In this section we prove Theorem E. The first part of the proof involves similar techniques to that of Theorem~\ref{thm:general result}, so we omit some of the details here.

First, we can assume that $\zeta_0\in \partial U_0$ is accessible since $L_0$ has positive harmonic measure. We claim we can assume that $U_0=\D$. To see this, let $L'_0=\phi_0^{-1}(L_0)$, where $\phi_0:\D\to U_0$ is a Riemann map. Then $L'_0\subset \partial \D$ has positive measure and the sequence $\hat F_n=F_n\circ \phi_0:\D \to U_n$, $n \in \N$, satisfies $\hat F_n(\xi)-F_n(\zeta_0) \to 0$ as $n\to\infty$ for all $\xi\in L'_0$. Thus if the result holds for $\hat F_n:\D\to U_n$, that is, $\hat F_n(z)-F_n(\zeta_0) \to 0$ as $n\to \infty$ for all $z\in \D$ or equivalently $F_n(\phi_0(z))-F_n(\zeta_0)\to 0$ as $n\to \infty$ for all $z\in \D$, then $F_n(z)-F_n(\zeta_0)\to 0$ as $n\to \infty$ for all $z\in U_0$, as required.

So, from now on we assume that $U=\D$ and $L_0\subset \partial \D$ has positive measure. Let $\zeta_n:=F_n(\zeta_0)$ for $n\in\N$, so by hypothesis $\dist(\zeta_n,\partial U_n)\to 0$ as $n\to \infty$ (possibly, $\dist(\zeta_n,\partial U_n)= 0$ for all $n \in \N$ if $\zeta_n\in\partial U_n$ for all $n\in\N$). Then take any $z_0\in \D$ and put $z_n=F_n(z_0)\in U_n$, for $n\in\N$. To obtain our result it is sufficient, by Theorem~A, to prove that
\begin{equation}\label{zn-zetan}
|z_n-\zeta_n|\to 0\;\text{ as }n\to\infty.
\end{equation}

By  the Severini--Egorov theorem (see \cite[Chapter~3]{Royden} or \cite[Theorem~1.3.26 and Exercise 1.4.31]{tao}, for example), there is a compact set $E_0 \subset L_0 \subset \partial \D$ such that
\begin{equation}\label{E0-L0}
\omega(z_0,E_0,\D) \ge \tfrac12\omega(z_0,L_0,\D)\;\text{ and }\; |F_n(\zeta)- \zeta_n|\to 0 \text{ as } n\to\infty,  \;\text{ uniformly for } \zeta\in E_0.
\end{equation}

Hence there exists a positive sequence $r_n\to 0$ as $n\to\infty$ such that
\begin{equation}\label{zeta-n}
F_n(E_0)\subset D(\zeta_n, r_n)\quad\text{and} \quad D(\zeta_n, r_n)\cap \partial U_n \neq \emptyset ,\;\text{ for } n\in\N.
\end{equation}

We prove \eqref{zn-zetan} by showing that, for some positive absolute constant~$C>1$ we have
\begin{equation}\label{eq:constC}
z_n\in D(\zeta_n,Cr_n), \;\text{ for  }n\in\N.
\end{equation}
It is sufficient to consider only those values of~$n$ for which $z_n\notin D(\zeta_n,r_n)$.

For such~$n$, we let~$V_n$ denote the component of $U_n\setminus \overline{D(\zeta_n,r_n)}$ that contains~$z_n$, $W_n$ denote the component of $F_n^{-1}(V_n)\subset \D$ that contains $z_0$, and put $\Gamma_n=\partial V_n\cap U_n$. Note that $V_n$ and $W_n$ are both simply connected domains, by the maximum principle. Moreover, as in the proof of Theorem~\ref{thm:general result}, $W_n$ is a Jordan domain since the part of  the preimage of $\Gamma_n$ that lies in $\D$ consists of at most countably many curves that accumulate at no point of~$\D$, and the ends of each such preimage curve consist of single points of $\partial \D$.

We can now apply Theorem~\ref{Cor:expansion} to $F_n$ in $W_n$ to deduce that
\begin{equation}\label{lowner2}
\omega(z_0, F_n^{-1}(\Gamma_n)\cap\partial W_n,W_n)\le \omega(z_n, \Gamma_n,V_n).
\end{equation}
Next observe that $E_0$ lies in $\partial \D \setminus \partial W_n$, by the definitions of $L_0$, $V_n$ and $W_n$. Therefore, since any Brownian path in $W_n$ originating at $z\in W_n$ can only exit $\D$ at a point of $E_0$ by first exiting $W_n$ at a point of $\partial W_n\cap \D$, we have
\begin{equation}\label{maxpr1}
\omega(z, E_0, \D)\le \omega(z, F_n^{-1}(\Gamma_n)\cap\partial W_n,W_n),\;\text{ for } z\in W_n,  n\in\N.
\end{equation}
Alternatively, we can prove \eqref{maxpr1} by using the maximum principle, because the harmonic measure on the left has radial boundary extension~0 almost everywhere on $\partial \D \cap \partial W_n$ (since $E_0 \subset\partial \D \setminus \partial W_n$) and the harmonic measure on the right takes the value~1 almost everywhere on $\D\cap\partial W_n$.

Now we estimate the right-hand side of \eqref{lowner2} from above. To do this we choose an unbounded closed connected set~$K_n$ which lies in $\C\setminus U_n$ and contains a point in $D(\zeta_n,r_n)\cap \partial U_n$. This set exists since~$U_n$ is a simply connected proper subset of $\C$. Then let~$\Omega_n$ denote the component of the complement of $K_n \cup \overline{D(\zeta_n,r_n)}$ that contains~$z_n$, which is a simply connected domain such that $V_n\subset \Omega_n$ and $\Gamma_n\subset \partial\Omega_n$. Then, by the maximum principle again, this time applied first in $V_n$ and second in $\Omega_n$, we have
\begin{equation}\label{maxpr2}
\omega(z, \Gamma_n,V_n) \le \omega(z, \Gamma_n,\Omega_n)\le \omega(z, \partial \Omega_n \cap U_n,\Omega_n),\;\text{ for } z\in V_n, n\in\N.
\end{equation}
In particular, \eqref{maxpr2} holds for $z=z_n$.

Now the right-hand function in \eqref{maxpr2} can be estimated from above by first mapping $\Omega_n$ to a bounded domain using $w=\psi_n(z):= 1/(z-\zeta_n)$ and then applying the Milloux-Schmidt inequality to the harmonic function
\[
u(w)= \omega(\psi_n^{-1}(w), \partial \Omega_n \cap U_n,\Omega_n),\quad w\in \psi_n(\Omega_n), n\in\N.
\]
Note that the domain $\psi_n(\Omega_n)$ lies in $D(0,R_n)$, where $R_n:=r_n^{-1}$, and~$u$ vanishes on the part of the boundary of $\psi_n(\Omega_n)$ that lies in $D(0,R_n)$. After extending~$u$ to be~0 throughout $D(0,R_n)\setminus \psi_n(\Omega_n)$ and hence subharmonic in $D(0,R_n)$, we deduce from the Milloux--Schmidt inequality, Lemma~\ref{MS}, that for $n\in\N,$ we have
\[u(w) \le \frac{4}{\pi} \tan^{-1}\left(\frac{|w|}{R_n} \right)^{\frac{1}{2}}\leq \frac{4}{\pi}\left(\frac{|w|}{R_n}\right)^{\frac12}, \;\text{ for } w\in \psi_n(\Omega_n),
\]
and in particular, taking $w=\psi_n(z_n)$, that
\[
\omega(z_n,\partial \Omega_n \cap U_n,\Omega_n)=u(\psi_n(z_n))
\le \frac{4}{\pi}\left(\frac{1}{R_n|z_n-\zeta_n|}\right)^{\frac12}=\frac{4}{\pi}\left(\frac{r_n}{|z_n-\zeta_n|}\right)^{\frac12}.
\]
Combining this estimate with \eqref{E0-L0}, \eqref{lowner2}, \eqref{maxpr1} and \eqref{maxpr2}, we deduce that, for $n\in\N$,
\[
\omega(z_0,L_0, \D)\le 2\omega(z_0,E_0, \D) \le \frac{8}{\pi}\left(\frac{r_n}{|z_n-\zeta_n|}\right)^{\frac12}, \quad\text{so}\quad
|z_n-\zeta_n|\le \frac{64r_n}{(\pi\omega(z_0,L_0, \D))^{2}}.
\]
Since $r_n\to 0$ as $n\to\infty$ and $\omega(z_0,L_0,\D)>0$ by hypothesis, this proves \eqref{eq:constC}, as required.


\section{Classifying forward compositions and proof of Theorem F}\label{sec:thmF}

In this section we focus on forward compositions of holomorphic maps between simply connected domains. We first classify such sequences in terms of hyperbolic distances, extending the classification of wandering domains given in \cite{BEFRS}. We then prove a general result concerning contracting sequences which implies Theorem F. At the end of the section we give an example which shows that the conclusion of Theorem F does not hold if the hypothesis of contracting is omitted.

\subsection{Hyperbolic classification}
In \cite[Theorem~A]{BEFRS}, we classified simply connected wandering domains into three distinct types: \textit{contracting}, \textit{semi-contracting} or \textit{eventually isometric}, according to whether hyperbolic distances between orbits of points, for almost all points, tend to 0, decrease but do not tend to 0, or are eventually constant, respectively. In Section~\ref{intro} of this paper we extended the definition of `contracting' to the more general context of sequences of holomorphic maps $F_n:U\to U_{n}$ between simply connected domains; see Definition~\ref{cont}.

In this section, we extend the concepts of `semi-contracting' and `eventually isometric' from wandering domains to sequences of holomorphic maps that can be expressed as forward compositions of holomorphic maps between simply connected domains; that is, sequences of the form $F_n=f_n \circ \cdots \circ f_1$, where $f_n:U_{n-1}\to U_n$ are holomorphic maps.

We also give a criterion to discriminate between the three types of forward compositions based on the concept of \emph{hyperbolic distortion} \cite[Sect. 5,11]{BeardonMinda}.

\begin{defn}[Hyperbolic distortion]\label{HDdef}
If $f:U\ra V$ is a holomorphic map between two hyperbolic domains $U$ and $V$, then the \emph{hyperbolic distortion} of $f$ at $z$ is
\[
\|Df(z)\|_U^V:=\lim_{z'\ra z}\frac{\dist_V(f(z'),f(z))}{\dist_U(z',z)}=\frac{\rho_V(f(z)) |f'(z)|}{\rho_U(z)},
\]
where $\rho_U(z)$ denotes the hyperbolic density at $z \in U$.
\end{defn}

Note that in the following result the holomorphic maps need not be surjections.

\begin{thm}[Hyperbolic classification theorem] \label{thm:classification}
Let $U_n, n \ge 0$ be a sequence of simply connected domains (not necessarily distinct or disjoint) and $f_n:U_{n-1} \to U_n, n\in \N$ be a sequence of holomorphic maps. Define the forward compositions $F_n=f_n\circ \cdots \circ f_1, n\in \N,$ and the set
\[
E=\{(z,z')\in U_0\times U_0 : F_n(z)=F_n(z') \text{\ for some $n\in\N$}\}.
\]
For $n\in \N$, let $\lambda_n(z)$ denote the hyperbolic distortion $\|Df_n(F_{n-1}(z))\|_{U_{n-1}}^{U_{n}}$.

Then exactly one of the following holds:
\begin{itemize}
\item[\rm(1)] $\dist_{U_n}(F_n(z), F_n(z'))\lran c(z,z')= 0 $ for all $z,z'\in U_0$, that is, the sequence $(F_n)$ is contracting on $U_0$; this case occurs if and only if $\sum_{n=1}^{\infty} (1-\lambda_n(z))=\infty$.
\item [\rm(2)] $\dist_{U_n}(F_n(z), F_n(z'))\lran c(z,z') >0$ and $\dist_{U_n}(F_n(z), F_n(z')) \neq c(z,z')$
for all $(z,z')\in (U_0 \times U_0) \setminus E$, $n \in \N$, in which case we say that the sequence $(F_n)$ is {\em  semi-contracting} on $U_0$.
\item[\rm(3)] There exists $N>0$ such that for all $n\geq N$, $\dist_{U_n}(F_n(z), F_n(z')) = c(z,z') >0$ for all $(z,z') \in (U \times U) \setminus E$, in which case we say that the sequence $(F_n)$ is {\em eventually isometric} on $U_0$; this case  occurs if and only if $\lambda_n(z) = 1$, for $n$ sufficiently large.
\end{itemize}
\end{thm}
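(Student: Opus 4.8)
The plan is to mimic the proof of \cite[Theorem~A]{BEFRS}, which established exactly this trichotomy for simply connected wandering domains; that argument only uses that $F_n=f_n\circ\cdots\circ f_1$ with the $f_n$ holomorphic maps between hyperbolic simply connected domains, never that the $f_n$ coincide, so it should carry over with only cosmetic changes. I will take it as understood that no $f_n$ is constant, so that $\{z':(z,z')\in E\}$ is countable for each $z$ and cases (2) and (3), which quantify over $(U_0\times U_0)\setminus E$, are not vacuous.

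First I would set up the monotone quantities. By the Schwarz--Pick lemma each $f_n:U_{n-1}\to U_n$ weakly contracts the hyperbolic metric, so for every $z,z'\in U_0$ the sequence $d_n:=\dist_{U_n}(F_n(z),F_n(z'))$ is non-increasing and converges to some $c(z,z')\ge0$; also $\lambda_n(z)\in[0,1]$, and $\lambda_n(z)=1$ forces, by Schwarz--Pick rigidity (not by surjectivity), $f_n$ to be a conformal isomorphism $U_{n-1}\to U_n$. If $(z,z')\in E$ then $F_m(z)=F_m(z')$ for all large $m$ (since $F_{n+1}=f_{n+1}\circ F_n$), so $c(z,z')=0$; if $(z,z')\notin E$ then $d_n>0$ for all $n$. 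Thus (1) is the statement $c\equiv0$, while (2) and (3) both assert $c(z,z')>0$ on $(U_0\times U_0)\setminus E$; what remains is to tie these statements to the conditions on $(\lambda_n)$ and to establish that the trichotomy is genuine.

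The engine is a quantitative form of Schwarz--Pick, obtained as in \cite{BeardonMinda} (see also \cite{BEFRS}): conjugating $f_n$ by conformal maps $\D\to U_{n-1}$ and $\D\to U_n$ sending $0$ to $F_{n-1}(z)$ and to $F_n(z)$ respectively reduces matters to a self-map $g$ of $\D$ with $g(0)=0$ and $|g'(0)|=\lambda_n(z)$; applying Schwarz's lemma to $g(\zeta)/\zeta$ and writing $t_n:=\tanh(\tfrac12 d_n)$ gives
\[
 t_{n-1}\cdot\frac{\lambda_n(z)-t_{n-1}}{1-\lambda_n(z)t_{n-1}}\;\le\; t_n\;\le\; t_{n-1}\cdot\frac{\lambda_n(z)+t_{n-1}}{1+\lambda_n(z)t_{n-1}},
\]
the lower bound being used only when $\lambda_n(z)\ge t_{n-1}$. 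The two multiplicative defects equal $(1-\lambda_n)(1+t_{n-1})/(1-\lambda_n t_{n-1})$ and $(1-\lambda_n)(1-t_{n-1})/(1+\lambda_n t_{n-1})$, and since $t_{n-1}\le\tanh(\tfrac12 d_0)<1$ both are comparable to $1-\lambda_n(z)$ with constants depending only on $d_0$. From the upper bound one reads off: if $c(z,z')>0$ and $\sum_n(1-\lambda_n(z))=\infty$, then $t_n\to0$, a contradiction; hence $\sum(1-\lambda_n(z))=\infty$ forces $c(z,z')=0$, and letting $z'$ vary together with the triangle inequality shows $(F_n)$ is contracting, i.e.\ case (1). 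From the lower bound: if $\sum(1-\lambda_n(z))<\infty$ then $\lambda_n(z)\to1$, so eventually $\lambda_n(z)\ge t_{n-1}$, the lower bound applies, and a convergent infinite product gives $\inf_n t_n>0$ (using $(z,z')\notin E$, so $t_n>0$ for all $n$), hence $c(z,z')>0$ and we are not in case (1). This proves the ``if and only if'' attached to (1). Within $\sum(1-\lambda_n(z))<\infty$ the split between (2) and (3) is: if $\lambda_n(z)=1$ for all $n\ge N$ then every $f_n$ with $n\ge N$ is a conformal isomorphism, hence a hyperbolic isometry, so $d_n=c$ for all $n\ge N$ and all pairs — case (3); otherwise $\lambda_n(z)<1$ for infinitely many $n$, and the strict form of the upper bound yields $d_n>c$ for every $n$ — case (2).

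It then remains to note that the three conditions on $(\lambda_n(z))$ are independent of $z$ and mutually exclusive and exhaustive: ``$\sum(1-\lambda_n(z))=\infty$'' is, by the above, equivalent to the base\-point-free statement that $(F_n)$ is contracting; ``$\lambda_n(z)=1$ for all large $n$'' is equivalent to ``$f_n$ is a conformal isomorphism for all large $n$'', again base\-point-free; and the remaining case is the complement of these two. I expect the principal difficulty to be not any single hard step but careful bookkeeping in the non-autonomous, non-surjective setting: tracking which point is normalised to $0$ at each stage when reducing to a self-map of $\D$, invoking Schwarz--Pick rigidity rather than the classical Schwarz lemma to detect the eventually isometric case, and verifying that the displayed estimates survive the degenerate pairs in $E$ (equivalently, orbits meeting critical points of some $f_n$, where $\lambda_n(z)=0$). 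All of this is routine, but it is precisely where the general statement goes beyond the classical $F_n=f^n$ situation.
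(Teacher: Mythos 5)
Your proposal is correct and follows essentially the same approach as the paper: the paper itself does not give a full proof but states that it is ``essentially identical'' to that of \cite[Theorem~A]{BEFRS}, resting on Riemann-map conjugation to reduce to forward compositions of self-maps of $\D$ fixing $0$, and your argument is a faithful reconstruction of that reduction together with the quantitative Schwarz--Pick bookkeeping it entails.
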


By the Schwarz-Pick Lemma, the eventually isometric case occurs if and only if $f_n:U_{n-1}\to U_{n}$ is univalent for all sufficiently large~$n$.

The proof of Theorem~\ref{thm:classification} is essentially identical to that of the corresponding result for simply connected wandering domains, \cite[Theorem~A]{BEFRS}, depending as it does on using Riemann maps to reduce the situation to classifying non-autonomous systems of forward compositions of self-maps of the unit disc that fix the point~0, as discussed in \cite[Section~2]{BEFRS}.

The hyperbolic classification theorem does not hold in general for sequences of holomorphic maps
between simply connected domains that are not defined by forward composition, as the following example shows.

\begin{ex}\label{Mercer} Consider the sequence $(F_n)$ of degree~2 Blaschke products defined as follows. For $z\in\D$ and $n\in\N$,
\[
F_{n}(z):=z\phi_{2/n}(\phi_{1/2}(z)),\quad\text{where}\quad \phi_a(z):=\frac{z-a}{1-\overline az},\;\;a\in\D.
\]
Then, $\dist_{\D}(F_n(1/2),F_n(0))\to 0 \text{ as } n\to\infty,$ but
\begin{equation}\label{nonzero}
\dist_{\D}(F_n(z),F_n(z'))\nrightarrow 0\;\text{ as } n\to\infty,\; \text{ for all } z, z' \in \D, \;(z,z') \neq (0,1/2).
\end{equation}
\end{ex}
\begin{proof}
For $n\in\N$, $F_{n}(0)=0, F_{n}(1/2)=-1/n$, so $\dist_{\D}(F_n(1/2),F_n(0))\to 0 \text{ as } n\to\infty,$ and
\[
F_n(z)=z\frac{\phi_{1/2}(z)-2/n}{1-(2/n)\phi_{1/2}(z)}\to z\phi_{1/2}(z)\;\text{ as }n\to \infty,\;\; \text{for all }z\in\D,
\]
from which the property \eqref{nonzero} readily follows.
\end{proof}
The form of the functions in Example~\ref{Mercer} was suggested by work of P.~Mercer (\cite{mercer}).

\subsection{Proof of Theorem F}
Theorem~F is a special case of  part~(b) of the following result, obtained by including the assumption that the interior orbits converge to the boundary.

\begin{thm}\label{thm:Pomgen}
Let $F_n: U_0 \to U_n$, $n\in\N$, be a sequence of holomorphic maps between simply connected domains, each having a full radial extension, with the additional hypothesis that
\begin{equation}\label{fn-bdry-prop7}
F_n(\zeta) \in \partial U_n,\;\text{ for almost all } \zeta \in \partial U_0.
\end{equation}
Also, suppose that $(F_n)$ is contracting in $U_0$.
\begin{itemize}
\item[(a)]
If there are measurable subsets $L\subset \partial U_0$ and $L_n\subset \partial U_n$, $n\in\N$, such that $L= F_n^{-1}(L_n)$, for $n\in\N$, up to a set of harmonic measure~0, then $L$ has either full or zero harmonic measure with respect to $U_0$.
\item[(b)]
If in addition $F_n= f_n \circ \cdots \circ f_1,$ where $f_n :U_{n-1} \to U_n$, $n\in\N$, are holomorphic maps between simply connected domains, each with a full radial extension, then for any $\zeta_0\in \partial U_0$ the set
\begin{equation} \label{eq:clarifying}
\{\zeta\in\partial U_{0} : |F_n(\zeta)-F_n(\zeta_0)|\ra0\}
\end{equation}
either has full or zero harmonic measure with respect to~$U_0$.
\end{itemize}
\end{thm}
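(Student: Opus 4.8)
The plan is to prove part~(a) first and then deduce part~(b) from it by an appropriate choice of the sets $L_n$. For part~(a), the key object is the function
\[
h(z) := \omega(z, L, U_0), \quad z\in U_0,
\]
which is harmonic, bounded, and takes values in $[0,1]$. The strategy is to show that, because $(F_n)$ is contracting, $h$ is in fact \emph{constant}; since its boundary values equal $1_L$ almost everywhere (with respect to harmonic measure), a constant value forces that constant to be $0$ or $1$, which is exactly the dichotomy. To see that $h$ is constant, fix two points $z, z'\in U_0$. The relation $L = F_n^{-1}(L_n)$ up to harmonic measure zero, combined with the fact that each $F_n$ has a full radial extension and preserves boundaries (hypothesis~\eqref{fn-bdry-prop7}), should give via Theorem~\ref{Cor:expansion} an inequality relating $\omega(z,L,U_0)$ and $\omega(F_n(z), L_n, U_n)$. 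The point is to get this in \emph{both} directions well enough that
\[
|h(z) - h(z')| = |\omega(z,L,U_0) - \omega(z',L,U_0)|
\]
is controlled by something that shrinks as $\dist_{U_n}(F_n(z), F_n(z'))\to 0$. Concretely, one expects an estimate of the form $|\omega(w, L_n, U_n) - \omega(w', L_n, U_n)|$ is small when $w, w'$ are hyperbolically close in $U_n$; this is a standard consequence of Harnack's inequality applied to the positive harmonic function $\omega(\cdot, L_n, U_n)$ and its ``complement'' $1 - \omega(\cdot, L_n, U_n) = \omega(\cdot, \partial U_n\setminus L_n, U_n)$, uniformly in $n$. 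Pushing $w = F_n(z)$, $w' = F_n(z')$ and letting $n\to\infty$ then yields $h(z) = h(z')$.

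**Reducing the expansion inequalities.** The slightly delicate point is that Theorem~\ref{Cor:expansion} gives an \emph{inequality} $\omega(z, F_n^{-1}(S), U_0) \le \omega(F_n(z), S, U_n)$, not an equality, so one cannot immediately identify $\omega(z,L,U_0)$ with $\omega(F_n(z), L_n, U_n)$. I would handle this by applying the inequality to both $S = L_n$ and $S = \partial U_n \setminus L_n$: using \eqref{fn-bdry-prop7}, $F_n^{-1}(L_n)$ and $F_n^{-1}(\partial U_n\setminus L_n)$ together exhaust $\partial U_0$ up to harmonic measure zero, so
\[
\omega(z, F_n^{-1}(L_n), U_0) + \omega(z, F_n^{-1}(\partial U_n\setminus L_n), U_0) = 1 = \omega(F_n(z), L_n, U_n) + \omega(F_n(z), \partial U_n\setminus L_n, U_n),
\]
and the two ``$\le$'' inequalities then force equality in each. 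Hence $\omega(z, L, U_0) = \omega(F_n(z), L_n, U_n)$ exactly, for every $n$ and every $z\in U_0$. Now subtract the versions for $z$ and $z'$:
\[
|h(z) - h(z')| = |\omega(F_n(z), L_n, U_n) - \omega(F_n(z'), L_n, U_n)|,
\]
and bound the right side by a Harnack-type estimate in terms of $\dist_{U_n}(F_n(z), F_n(z'))$, which tends to $0$ by the contracting hypothesis. Therefore $h$ is constant on $U_0$; since the radial boundary values of $h$ equal $1_L$ almost everywhere, $h\equiv 0$ gives $\omega(z, L, U_0) = 0$ for all $z$, and $h\equiv 1$ gives full harmonic measure.

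**Deducing part~(b).** For part~(b), fix $\zeta_0\in\partial U_0$ and let $\zeta_n^{(0)} := F_n(\zeta_0) \in \partial U_n$ (using \eqref{fn-bdry-prop7}). Because $F_n = f_n\circ\cdots\circ f_1$ and each $f_n$ preserves boundaries and has a full radial extension, Corollary~\ref{composing-full-radial-extns} applies. Set
\[
L := \{\zeta\in\partial U_0 : |F_n(\zeta) - F_n(\zeta_0)|\to 0\}, \qquad L_n := \{w\in\partial U_n : |f_m\circ\cdots\circ f_{n+1}(w) - F_m(\zeta_0)|\to 0 \text{ as } m\to\infty\}.
\]
The compositional structure gives $\zeta\in L$ if and only if $F_n(\zeta) = (f_n\circ\cdots\circ f_1)(\zeta)\in L_n$ — this is the crucial place where forward composition, rather than a general sequence, is used — so $L = F_n^{-1}(L_n)$ up to harmonic measure zero for each $n$. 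Measurability of the $L_n$ follows from the Borel-set properties in Theorem~\ref{thm:PommSect6.2} together with the fact that a countable limsup/liminf of Borel conditions is Borel. Now part~(a) applies and gives the conclusion.

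**Expected main obstacle.** The main technical obstacle I anticipate is making the promotion of the Theorem~\ref{Cor:expansion} inequalities to equalities fully rigorous: one must be careful that, under the radial-extension formalism, $F_n^{-1}(L_n)$ and $F_n^{-1}(\partial U_n\setminus L_n)$ really do partition $\partial U_0$ up to a harmonic-measure-null set — this uses \eqref{fn-bdry-prop7} (so that $F_n^*(\zeta)\in\partial U_n$ rather than in $U_n$ for almost all $\zeta$) together with the fact that $(F_n^*)^{-1}$ of a null set is null by Theorem~\ref{Cor:expansion}. A secondary, more routine point is establishing the uniform-in-$n$ Harnack estimate $|\omega(w, L_n, U_n) - \omega(w', L_n, U_n)| \le C(\dist_{U_n}(w,w'))$ with $C(t)\to 0$ as $t\to 0$ independently of $n$ and of $L_n$; this follows from applying Harnack's inequality on a hyperbolic ball to the two positive harmonic functions $\omega(\cdot, L_n, U_n)$ and $1-\omega(\cdot, L_n, U_n)$, but it should be stated carefully since the domains $U_n$ vary. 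The rest of the argument — reducing to $h$ constant, identifying boundary values, and the compositional bookkeeping in part~(b) — is then straightforward.
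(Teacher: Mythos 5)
Your proof is correct and takes a genuinely different route from the paper. The paper's proof reduces to the disc via Riemann maps, constructing self-maps $G_n=\phi^{-1}\circ h_n\circ F_n\circ\phi$ of $\D$ fixing~$0$ (where $h_n:U_n\to U_0$ is conformal with $h_n(F_n(z_0))=z_0$), observing that these are inner functions by~\eqref{fn-bdry-prop7}, and then invoking Pommerenke's strong mixing theorem for contracting sequences of inner functions (Lemma~\ref{thm:pommerenke}) as a black box. You instead show directly that $h(z):=\omega(z,L,U_0)$ is constant: hypothesis~\eqref{fn-bdry-prop7} upgrades the L\"owner-type inequality of Theorem~\ref{Cor:expansion} to an \emph{equality} by applying it simultaneously to $L_n$ and to $\partial U_n\setminus L_n$, whose preimages partition $\partial U_0$ up to a null set and whose harmonic measures on both sides sum to~$1$; this yields $\omega(z,L,U_0)=\omega(F_n(z),L_n,U_n)$ for every $z$ and $n$, and then the hyperbolic Harnack estimate $|u(w)-u(w')|\le e^{2\dist_{U_n}(w,w')}-1$ for harmonic $0\le u\le 1$, combined with the contracting hypothesis, shows $h(z)=h(z')$ in the limit. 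A constant harmonic measure function forces the constant into $\{0,1\}$ because its radial boundary values equal $1_L$ a.e. Your route is self-contained and more elementary, using only ingredients the paper has already developed (Theorem~\ref{Cor:expansion} plus Harnack); the paper's route is shorter once Pommerenke is available and yields, for free, the quantitatively stronger mixing identity $|A\cap E|=\tfrac{1}{2\pi}|A|\,|E|$ of which the zero--one law is a corollary. Part~(b) is handled essentially the same way in both proofs, by defining $L_n$ via the tail compositions $f_{n+m}\circ\cdots\circ f_{n+1}$ so that $L=F_n^{-1}(L_n)$ up to a null set and reducing to part~(a).
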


For the proof of Theorem \ref{thm:Pomgen} we need the  following `strong mixing' result due to  Pommerenke \cite[Theorem~1]{Pommerenke-ergodic}.

\begin{lem}[Pommerenke]\label{thm:pommerenke}
Let $(G_n)_{n\geq1}$ be a sequence of inner functions such that
\[
G_n(0)=0\quad\text{and}\quad G_n(z) \to 0 \; \text{ as } n \to \infty,\; \text{ for all } z\in \D,
\]
and suppose there are measurable subsets $E$ and $E_n$, $n\in\N$, of $\partial \D$ such that $E= G_n^{-1}(E_n)$, for $n\in\N$, up to a set of measure~0. Then, for all arcs $A\subset \partial \D$, we have
\begin{equation}\label{mixing}
|A\cap E| = \frac{1}{2\pi}|A|.|E|.
\end{equation}
It follows that the set~$E$ has either full or zero Lebesgue measure.
\end{lem}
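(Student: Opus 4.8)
The plan is to prove directly that the Poisson extension of $\mathbf{1}_E$ to $\D$ is constant; the mixing identity \eqref{mixing} and the zero-or-full dichotomy then both follow at once. Normalise Lebesgue measure on $\partial\D$ so that it agrees with $\omega(0,\cdot,\D)$. The starting point is that each $G_n$ preserves this measure: since $G_n$ is inner and fixes $0$, L\"owner's lemma (Lemma~\ref{thm:Ransford non continuous}), which holds with equality for inner functions, gives $\omega(0,G_n^{-1}(S),\D)=\omega(G_n(0),S,\D)=\omega(0,S,\D)$ for every Borel set $S\subset\partial\D$. In particular the hypothesis $E=G_n^{-1}(E_n)$ is insensitive to altering $E_n$ on a null set, and it forces $|E_n|=|E|$ for all $n$.

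First I would establish the subordination identity $u=u_n\circ G_n$ on $\D$, where $u$ and $u_n$ are the Poisson extensions of $\mathbf{1}_E$ and $\mathbf{1}_{E_n}$ respectively. Both sides are bounded harmonic functions, so it is enough to check that they have the same radial boundary values a.e. For a.e.\ $\zeta\in\partial\D$ the radial limit $G_n(r\zeta)\to G_n^{*}(\zeta)$ exists, is nontangential, and lies on $\partial\D$; and since $G_n^{*}$ preserves measure, for a.e.\ such $\zeta$ the point $G_n^{*}(\zeta)$ is one at which $u_n$ has nontangential boundary value $\mathbf{1}_{E_n}(G_n^{*}(\zeta))$, whence
\[
u_n(G_n(r\zeta))\longrightarrow \mathbf{1}_{E_n}(G_n^{*}(\zeta))=\mathbf{1}_{G_n^{-1}(E_n)}(\zeta)=\mathbf{1}_E(\zeta)\quad\text{as }r\to1,
\]
which is the radial boundary value of $u$. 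I expect this to be the main obstacle: it is the point where the measure-preserving property of inner functions is genuinely used (to transport full-measure sets through $G_n^{*}$), and it must be handled carefully because $E_n$ is only specified modulo null sets.

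Granting the identity, the rest is routine. Put $m=|E|$, so that $u_n(0)=|E_n|=m$ for every $n$. Applying Harnack's inequality to the nonnegative harmonic functions $u_n$ and $1-u_n$ yields, for $|w|\le\rho<1$, the estimate
\[
1-\tfrac{1+\rho}{1-\rho}(1-m)\;\le\; u_n(w)\;\le\; \tfrac{1+\rho}{1-\rho}\,m,
\]
with bounds independent of $n$; hence $u_n(w)\to m$ as $w\to0$, uniformly in $n$. Fixing $z\in\D$ and using that $G_n(z)\to0$ as $n\to\infty$, I would deduce $u(z)=\lim_{n\to\infty}u_n(G_n(z))=m$, so $u\equiv m$ on $\D$. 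Since the radial boundary function of the constant $m$ must coincide a.e.\ with $\mathbf{1}_E$, we get $\mathbf{1}_E=m$ a.e., forcing $m\in\{0,1\}$: either $|E|=0$, or $E=\partial\D$ up to a null set. In both cases \eqref{mixing} is immediate for every arc $A$ (both sides are $0$ when $m=0$, and both equal $|A|$ when $m=1$), and the claimed dichotomy for $E$ holds.
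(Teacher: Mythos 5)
Your strategy is correct and genuinely different from the paper's. The paper (following Pommerenke's original argument) first proves the mixing identity \eqref{mixing} and then extracts the zero-or-full dichotomy by applying it along a nested sequence of arcs shrinking to a Lebesgue point of density of~$E$. You instead go directly for the dichotomy via the subordination identity $u=u_n\circ G_n$ together with Harnack near the origin and the hypothesis $G_n(z)\to 0$, and then observe that \eqref{mixing} is trivial once $|E|\in\{0,\text{full}\}$. This is a cleaner and more self-contained route for the lemma as stated, at the cost of not giving the quantitative mixing statement for sets that are not of the exact invariant form (which Pommerenke's proof does deliver more generally). Your Harnack estimate and the passage from $u_n(G_n(z))\to m$ to $u\equiv m$ are correct.

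The step you flag as the main obstacle does contain a genuine gap as you have written it. Knowing that $u_n$ has nontangential boundary value $\mathbf{1}_{E_n}(G_n^*(\zeta))$ at $G_n^*(\zeta)$ does not by itself give $u_n(G_n(r\zeta))\to \mathbf{1}_{E_n}(G_n^*(\zeta))$: the image curve $r\mapsto G_n(r\zeta)$ need not approach $G_n^*(\zeta)$ inside a Stolz angle (this would require a finite angular derivative at $\zeta$, which is not available for a.e.\ $\zeta$ for a general inner function), and bounded harmonic functions can fail to converge along tangential paths at points where they have a nontangential limit. The correct and much shorter way to get the subordination identity is the equality case of L\"owner's lemma for inner functions, already quoted in the paper (the remark after Theorem~\ref{thm:Ransford non continuous}, citing Doering--Ma\~n\'e): for inner $G_n$ and any Borel $S\subset\partial\D$, $\omega(z,(G_n^*)^{-1}(S),\D)=\omega(G_n(z),S,\D)$ for all $z\in\D$. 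Taking $S=E_n$ and using $E=(G_n^*)^{-1}(E_n)$ up to a null set gives $u(z)=\omega(z,E,\D)=\omega(G_n(z),E_n,\D)=u_n(G_n(z))$ directly, without any boundary-curve analysis. With that substitution your argument closes.
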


\begin{rem*}
The statement of Lemma~\ref{thm:pommerenke} is different from that of \cite[Theorem~1]{Pommerenke-ergodic} in several respects. First, for simplicity, we have normalised the inner functions to fix 0. Second, the proof of the identity \eqref{mixing} is obtained by following the proof of \cite[Theorem~1]{Pommerenke-ergodic} while exchanging the roles of the sets $E_n$ and $E$ in that proof. Third, the final statement of Lemma~\ref{thm:pommerenke} follows by applying \eqref{mixing} to a nested sequence of arcs converging to a point of density of~$E$, when such a point exists.
\end{rem*}

\begin{proof}[Proof of Theorem \ref{thm:Pomgen}]
Let $z_0$ be an arbitrary point in $U_0$ and let $h_n$ be a univalent map from $U_n$ onto $U_0$ with $h_n(F_n(z_0))=z_0$. Then the map  $h_n \circ F_n$ is  a self-map of $U_0$ fixing $z_0$ (see Figure \ref{fig:Pom-anal}). Now let $\phi: \mathbb{D} \to U_0$ be a Riemann map such that $\phi(0)=z_0$. Accessible boundary points of $U_0$ correspond to points of $\partial \mathbb{D}$ where $\phi$ has radial limits (see Section~\ref{sect:boundary maps}).

\begin{figure}[hbt!]
\begin{center}
\def\svgwidth{0.8\textwidth}
\begingroup%
  \makeatletter%
    \setlength{\unitlength}{\svgwidth}%
  \makeatother%
   \begin{picture}(1,0.2603515)%
    \put(0,0){\includegraphics[width=\unitlength]{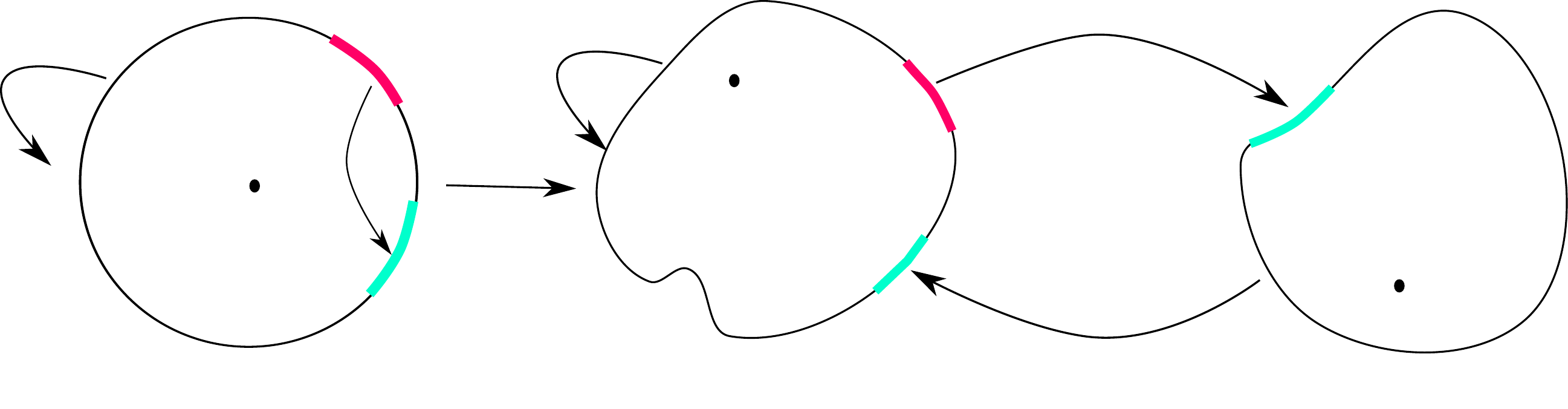}}
    \put(0.13850199,0.0014){\color[rgb]{0,0,0}\makebox(0,0)[lb]{\small{$\D$}}}%
    \put(0.24274871,0.22556075){\color[rgb]{0,0,0}\makebox(0,0)[lb]{\small{$E$}}}%
    \put(0.26,0.08){\color[rgb]{0,0,0}\makebox(0,0)[lb]{\small{$E_n$}}}%
    \put(0.01399325,0.22663144){\color[rgb]{0,0,0}\makebox(0,0)[lb]{\small{$G_n$}}}%
    \put(0.85037972,0.16978653){\color[rgb]{0,0,0}\makebox(0,0)[lb]{\small{$L_n$}}}%
    \put(0.55121414,0.19226582){\color[rgb]{0,0,0}\makebox(0,0)[lb]{\small{$L$}}}%
    \put(0.68302459,0.20565798){\color[rgb]{0,0,0}\makebox(0,0)[lb]{\small{$F_n$}}}%
    \put(0.68404638,0.01549675){\color[rgb]{0,0,0}\makebox(0,0)[lb]{\small{$h_n$}}}%
    \put(0.33153002,0.23722443){\color[rgb]{0,0,0}\makebox(0,0)[lb]{\small{$h_n\circ F_n$}}}%
    \put(0.46844944,0.0014){\color[rgb]{0,0,0}\makebox(0,0)[lb]{\small{$U_0$}}}%
    \put(0.90066512,0.0014){\color[rgb]{0,0,0}\makebox(0,0)[lb]{\small{$U_n$}}}%
    \put(0.49,0.09928325){\color[rgb]{0,0,0}\makebox(0,0)[lb]{\small{$h_n(L_n)$}}}%
    \put(0.3121161,0.115){\color[rgb]{0,0,0}\makebox(0,0)[lb]{\small{$\phi$}}}%
    \put(0.66054529,0.125){\color[rgb]{0,0,0}\makebox(0,0)[lb]{\small{     $\ldots$}}}%
      \put(0.14536942,0.15154541){\color[rgb]{0,0,0}\makebox(0,0)[lb]{\small{$0$}}}%
    \put(0.47632162,0.20727276){\color[rgb]{0,0,0}\makebox(0,0)[lb]{\small{$z_0$}}}%
    \put(0.90166914,0.07875869){\color[rgb]{0,0,0}\makebox(0,0)[lb]{\small{$F_n(z_0)$}}}%
  \end{picture}%
\endgroup%
\end{center}
\caption{\label{fig:Pom-anal} \small Illustration of the proof of Theorem \ref{thm:Pomgen}. The sets $E_n$ and $L_n$ are represented as arcs for simplicity but are in fact just measurable.}
\end{figure}

For $n\geq1$ we define the map $G_n:\D\ra\D$ as
\[
G_n=\phi^{-1}\circ h_n\circ F_n \circ \phi,
\]
and note that  $G_n(0)=0$ for $n\in\N$. Since the sequence $(F_n)$ is contracting we have that $\dist_{U_n}(F_n(z),F_n(z_0))\ra 0$ as $n\to\infty$ for all $z \in U_0$. Also, since $\phi^{-1}\circ h_n: U_n\ra\D$ is a conformal map, we have, for every $w\in\D$,
\begin{align*}
\dist_\D(G_n(w), G_n(0))&=\dist_\D(\phi^{-1}\circ h_n\circ F_n\circ\phi(w), \phi^{-1}\circ h_n\circ F_n(z_0)) \\
&=\dist_{U_n}(F_n(\phi(w)), F_n(z_0))\ra0 \;\text{ as }n\to \infty,
\end{align*}
which means that $(G_n)$ is also contracting.

Next define the sets $E_n\subset \partial \D$  as
\begin{equation}\label{eq:Pom3}
E=\phi^{-1}(L)\quad\text{and} \quad E_n=\phi^{-1}(h_n(L_n)),\;\text{ for } n\in\N.
\end{equation}
By  \eqref{fn-bdry-prop7} and \eqref{eq:Pom3} we have, for $n\in\N$, up to a set of measure zero (by an argument similar to the one used in the proof of Proposition~\ref{composing-radial extns}), that
\begin{align*}
G_n^{-1}(E_n)&=\phi^{-1}\circ F_n^{-1}\circ h_n^{-1}\circ\phi(E_n)\\
&=\phi^{-1}\circ F_n^{-1}\circ h_n^{-1}\circ h_n(L_n)\\
&=\phi^{-1}(L)=E.
\end{align*}
Suppose now that~$L$ has positive harmonic measure in $U_0$, so $E$ has positive Lebesgue measure in $\partial \D$. Since $F_n$ maps $\partial U_0$ to $\partial U_n$,  $G_n$ is an inner function for every~$n$.
We can now apply Lemma \ref{thm:pommerenke} to the sequence $(G_n)$ and deduce that $|E|=2\pi$, and so $L \subset \partial U_0$ has full harmonic measure.

For part~(b), let~$L$ be the set in \eqref{eq:clarifying}, and  assume that for some $\zeta_0\in\partial U_0$ the  set $L$ has positive harmonic measure in $U_0$. Then define the set $L_n \subset \partial U_n$, for $n\in\N$, as
\begin{equation}\label{eq:Pom1}
L_n:= \{\zeta \in \partial U_n : |f_{n+m} \circ \cdots \circ f_{n+1}(\zeta)-F_{m+n}(\zeta_0)| \to 0 \;\text{ as }m \to \infty\}.
\end{equation}

By definition,
\begin{equation} \label{eq:Pom2}
 L= \{\zeta\in\partial U_{0} : |F_n(\zeta)-F_n(\zeta_0)|\ra0\} = F_n^{-1}(L_n),\;\text{ for } n\in\N,
\end{equation}
up to a set of harmonic measure~0, so we can apply part~(a) of the theorem to deduce that~$L$ has full harmonic measure in~$U_0$.
\end{proof}

\subsection{An example related to Theorem~F}
The following example shows that Theorem~F (and Theorem~\ref{thm:Pomgen}) fail without the hypothesis which requires the sequence of maps to be contracting. The example consists of a sequence of  M\"obius self-maps $M_n$ of $\D$, and hence it can also be viewed as a forward composition $M_n=m_n \circ \cdots \circ m_1$ of M\"obius self-maps by defining $m_n=M_n\circ M^{-1}_{n-1}$.

\begin{ex}  \label{ex:isometric}
There exists a sequence $(M_n)$ of M\"obius self-maps of $\D$ such that $M_n(1)=1$ for all $n$, and
\begin{enumerate}[\rm (a)]
\item $M_n(z)\to 1$ as $n\to\infty$, for all $z\in\D$;
\item $M_n(\zeta)\to 1$ as $n\to\infty$, for all $\zeta=e^{i\theta}$, $|\theta|<\pi/2$; but
\item $M_n(\zeta) \not\to 1$ as $n\to\infty$, for all $\zeta=e^{i\theta}$, $\pi/2 \leq |\theta| \leq \pi$.
\end{enumerate}
\end{ex}
\begin{proof}
We define the maps first on the right half-plane. For points $\zeta\in\partial \mathbb{H}$ and $a>1$ we shall consider affine self-maps of $\mathbb{H}$ of the form
\[
\widehat{M}(z; \zeta,a)=a(z-\zeta)+\zeta.
\]
Clearly $\widehat{M}(\infty)=\infty$,  $\widehat{M}(\zeta)=\zeta$.

For $j=1,2,\ldots$ we divide the segment $[-i,i]$ in the imaginary axis into $j$ segments $I_{j,k}$,  $k=0,\ldots,j-1$, each of length $2/j$, that is
\[
I_{j,k}=\left[-1+\frac{2k}{j},-1+\frac{2(k+1)}{j}\right]i,\quad k=0,\ldots,j-1,
\]
with midpoint
$
\zeta_{j,k}=\left(-1+\frac{2k+1}{j}\right)i.
$
Hence, for all $k=0,\ldots,j-1$,
\[
\left| I_{j,k}\right|=\frac{2}{j}
\text{ \ \ and \ \  }  \bigcup_{k=0}^{j-1} I_{j,k}=[-i,i].
\]
Now, for all $k=0,\ldots,j-1$,  we define the maps
\[
\widehat{M}_{j,k}(z)=\widehat{M}(z; \zeta_{j,k},j)= j (z-\zeta_{j,k}) + \zeta_{j,k},\quad z\in \overline{\mathbb{H}}.
\]
Then, on the one hand, for $\zeta\in I_{j,k}$ we have
\begin{equation}\label{eq:notinfty}
|\widehat{M}_{j,k}(\zeta)| \leq j |\zeta-\zeta_{j,k}|+ 1 \leq j(1/j)+1=2,
\end{equation}
and on the other hand, for all $|z|>1$ and since $|\zeta_{j,k}|<1$,
\begin{equation}\label{eq:infty}
|\widehat{M}_{j,k}(z)| \geq j |z-\zeta_{j,k}| -1 \geq j(|z|-1)-1,\quad 0\le k\le j-1, j=1,2\ldots.
\end{equation}
Now we arrange the maps $\widehat{M}_{j,k}$ into a single sequence $\widehat{M}_n=\widehat{M}_{j,k}$, where $n=\frac12j(j+1)+k$, $0\le k\le j-1$, $j=1,2\ldots$, and observe that  for all $z\in \overline{{\mathbb H}}$ with $|z|>1$ we have that $\widehat{M}_n(z)\to \infty$ as $n\to\infty$, by \eqref{eq:infty}. On the other hand, every $z\in [-i,i]$ belongs to an infinite number of segments $I_{j,k}$, so there exists an infinite number of distinct values of $n$ for which $|\widehat{M}_n(z)|<2$ by (\ref{eq:notinfty})and therefore $\widehat{M}_n(z)\not\to \infty$ as $n\to\infty$.

Finally, we define the corresponding maps on the closed unit disc by $M_n(z)=\beta^{-1} \circ \widehat{M}_n \circ\beta(z)$, where $\beta:\D \to \mathbb{H}$ is the M\"obius map such that $\beta(1)=\infty$, $\beta(-1)=0$ and $\beta(\pm i)=\pm i$. By the discussion above, properties (b) and (c) are immediate. Property~(a) follows immediately from Theorem~A, for example.
\end{proof}

\section{Examples related to the ADM dichotomy}\label{sect:examples}
In this section we give three examples related to Theorem~\ref{thm:DM}, part~(b). Recall that this states that for an inner function $f: \D\to \D$ with a Denjoy--Wolff point and such that $\sum_{n\in\N}(1-|f^n(z)|)=\infty$, for some $z\in\D$, almost all boundary orbits of $f$ are dense in $\partial \D$. Here we show that this result fails to hold in general if iterates of inner functions are replaced by forward compositions of inner functions.

In Section~\ref{ADMb-fails}, we give two examples which show this failure: the first consists of forward compositions of M\"obius self-maps of $\D$ and is therefore an isometric sequence in the sense of Theorem~\ref{thm:classification}, and the second consists of forward compositions of Blaschke products of degree~2, which turns out to be a semi-contracting sequence. Then, in Section~\ref{ADMb-holds} we give an example of a forward composition of distinct Blaschke products of degree~2 which is contracting and for which the conclusion of Theorem~\ref{thm:DM}, part~(b) does hold -- the techniques used in the proof of this example have some independent interest.

These examples raise the question of whether part~(b) of the ADM dichotomy holds for {\em contracting} forward compositions of inner functions (or indeed for more general contracting sequences), which we discuss at the end of this section.

\subsection{Forward compositions for which Theorem~\ref{thm:DM}, part~(b) fails}\label{ADMb-fails}
Our first example is a sequence of M\"obius self-maps of $\D$.
\begin{ex}\label{Mob-dvgt}
Let $(a_n)$ be an increasing sequence on $[0,1)$ such that $a_0=0$ and $\lim_{n \to \infty}a_n=1$, let $m_n$, $n\in\N$, be the M\"obius map such that $m_n(\pm 1)=\pm 1$ and $m_n(a_{n-1})=a_n$, and define $M_n=m_n\circ\cdots\circ m_1$, that is,
\[M_n(z)=\frac{z+a_n}{1+a_nz}, \quad \text{for } n \geq 1.\]
Then, every  point $\zeta \in \partial \mathbb{D}\setminus\{-1\}$ satisfies that $\lim_{n\to\infty} M_n(z)=1$, regardless of whether $\sum_{n\geq 0} (1-a_n)$ converges or not.
\end{ex}
\begin{proof}
We consider the cross-ratio
\[\frac{(z_1-z_3)(z_4-z_2)}{(z_3-z_2)(z_1-z_4)},\]
which is invariant under M\"obius transformations. Take $z_1=1,z_2=-1,z_3=0,z_4= \zeta \in \partial \mathbb{D} \setminus \{\pm1\}.$ Then the invariance under $M_n$ gives
\[0 \neq \frac{(1-0)(\zeta+1)}{(0+1)(1-\zeta)}= \frac{(1-a_n)(M_n(\zeta)+1)}{(a_n+1)(1-M_n(\zeta))}.\]
Since $a_n \to 1$ we have $\frac{1-a_n}{a_n+1}\to 0$ as $n \to \infty$, so
\[\frac{1-M_n(\zeta)}{M_n(\zeta)+1} \to 0 \quad \text{as}\; n \to \infty,\]
as required.
\end{proof}

As announced,  a similar effect to that in Example~\ref{Mob-dvgt} can be achieved with a  forward composition of Blaschke products $(b_n)$ of degree two. In this example, $B_n=b_n\circ \cdots \circ b_1 \to 1$ as $n\to \infty$ in $\D$ and the summability condition of Theorem~\ref{thm:DM}, part~(b) fails, and yet $B_n \to 1$ as $n\to \infty$ on a whole arc of $\partial \D$ with centre~1.
\begin{ex}\label{Bn-to-1}
There exists a sequence of degree two Blaschke products $(b_n)$ of the form
\begin{equation}
\label{bn-def}
b_n(z)= \left(\frac{z+\mu_n}{1+\mu_nz}\right)^2,
\end{equation}
where $(\mu_n)$ is a decreasing sequence such that $\mu_n \to 1/3$ as $n \to \infty$, such that
\begin{enumerate}[(a)]
\item  $B_n:=b_n\circ\cdots\circ b_1$ satisfies $ \sum_{n=1}^{\infty}(1-B_n(0))=\infty,$
\item  $\lim_{n \to \infty} B_n(z)=1$, for $z \in \D \cup \{\zeta\in\partial\D:|\zeta-1|<\rho\},$ for some $\rho>0.$
\end{enumerate}
\end{ex}
\begin{proof}
First note that the map $z \mapsto \left(\frac{z+\mu}{1+\mu z}\right)^2$ has an attracting fixed point at~1 for $\mu \in (1/3,1)$ and a parabolic fixed point at 1 for $\mu=1/3$. It is convenient to construct the sequence $(\mu_n)$ by making a change  of variables to the right half-plane $\mathbb{H}=\{z:\operatorname{Re}z>0\}$ via the map $ \alpha(z)= \frac{1+z}{1-z}$.

Now $z \mapsto \frac{z+\mu_n}{1+\mu_n z}$ is conjugate to $z\mapsto \lambda_n z$ under $ \alpha$, where $\lambda_n= \frac{1+\mu_n}{1-\mu_n}$ and we require $(\lambda_n)$ to be a decreasing sequence tending to 2 as $n \to \infty$. Also $z \mapsto z^2$ is conjugate to $z \mapsto \frac{1}{2}(z+1/z)$ under $\alpha$. We deduce that $b_n$ in ${\mathbb D}$ is conjugate to the map $z \mapsto \frac{1}{2}(\lambda_nz+\frac{1}{\lambda_nz})$ in ${\mathbb H}$. Abusing notation, we shall denote this map also by $b_n$, and also write $B_n=b_n\circ \cdots \circ b_1$ in ${\mathbb H}$. The sum in statement~(a) now becomes
\[
\sum_{n=1}^{\infty}\left|1- \alpha^{-1}(B_n(1))\right|=\sum_{n=1}^{\infty} \frac{2}{B_n(1)+1},
\]
which diverges if and only if $\sum_{n= 1}^{\infty} 1/B_n(1)$ diverges.

We choose $(\lambda_n)$ in such a way that $B_n(1)=n+1$, for $n\in\N$, so that this divergence is assured. To achieve this we need
\begin{equation}
\label{eq:b_lamda}
B_n(1)= b_n(n)=\frac{1}{2}\left(\lambda_n n+ \frac{1}{\lambda_n n}\right)=n+1,
\end{equation}
so we define
\begin{equation}
\label{eq:lamda_n}
\lambda_n := \frac{n+1+ \sqrt{n^2+2n}}{n} = 1+\frac1n+\left(1+\frac2n\right)^{1/2} = 2\left(1+ \frac{1}{n}\right)+ O(1/n^2)\;\text{ as } n \to \infty.
\end{equation}

We have $B_n(z) \to \infty$ for all $z$ in $\mathbb{H}$, by Theorem~A, since this property holds for $z=1$. So to prove~(b) we need to show that, for all sufficiently large~$|y|$:
\begin{equation}
\label{eq:B_n}
B_n(iy) \to \infty \;\text{ as } n \to\infty.
\end{equation}
Now
\begin{equation}
\label{b-beta}
b_n(iy)= \frac{1}{2}\left(\lambda_n iy+ \frac{1}{\lambda_n iy}\right)=\frac{1}{2}\left(\lambda_n iy- \frac{i}{\lambda_n y}\right)=: i\beta_n(y),
\end{equation}
say. By symmetry we need only to consider $y>0$. It is easy to check that the maps $y\mapsto\beta_n(y)$ have a unique positive fixed point $y^*_n =\frac12 \sqrt{n} + o(1)$ and, if $y>y_n^*$ then $\beta_n(y)> y$. Hence we choose to check that if $y_n> n^\frac34$,  for some $n\ge N$, say, then $y_{n+1}=\beta_n(y_n) > (n+1)^\frac34$ also.

We have
\begin{align*}
y_{n+1} &=\beta_n(y_n)= \frac{1}{2}\left(\lambda_n n^{3/4}-  \frac{1}{\lambda_n n^{3/4}} \right) \nonumber \\
&= \frac{1}{2} \left( (2(1+1/n)+O(1/n^2))n^{3/4}- \frac{1}{(2(1+1/n)+O(1/n^2))n^{3/4}}\right) \nonumber \\
&= \frac{n+1}{n^{1/4}}+O(1/n^{5/4})-\frac{n^{1/4}}{4(n+1+O(1/n))}\;\text{ as } n\to \infty. \nonumber
\end{align*}
We claim that for $n$ large enough, $y_{n+1}>(n+1)^{3/4}.$ This holds since
\[\frac{n+1}{n^{1/4}}-(n+1)^{3/4}> \frac{n^{1/4}}{4(n+1+O(1/n))}+O(1/n^{5/4}),\]
that is,
\[ \left(\frac{n+1}{n}\right)^{1/4}-1>\frac{n^{1/4}}{4(n+1)^{3/4}(n+1+O(1/n))}+ O\left(1/n^2\right),\]
which follows from
\[\frac{1}{4n} > \frac{n^{1/4}}{4(n+1)^{7/4}+O(1/n^{1/4})} + O\left(\frac{1}{n^2}\right),\]
true for~$n$ large enough. Hence there exists $N \in \mathbb{N}$ such that $y_N>N^{3/4}$ implies that $y_n>n^{3/4}$, for $n \geq N$. We deduce from this and from \eqref{b-beta} that, for all $y >N^{3/4}$,
\[
b_n\circ\cdots \circ b_N(iy) \to \infty \;\text{ as } n\to \infty
\]
and so \eqref{eq:B_n} holds if we relabel the $b_n$ starting with $b_N$.
\end{proof}
\begin{rem*}
In Example~\ref{Bn-to-1} it is natural to ask how large is the subset of $\partial \D$ of points such that $B_n \to 1$ as $ n\to \infty$. It is plausible that this set is dense in $\partial\D$. However, the sequence of Blaschke products in Example~\ref{Bn-to-1} is semi-contracting, as we now show, so we cannot use Theorem~\ref{thm:Pomgen} to deduce that $\lim_{n \to \infty} B_n(\zeta)=1$ for almost all $\zeta \in \partial\D$. Indeed, if we use Definition~\ref{HDdef} and \eqref{eq:lamda_n} to compute
\begin{align*}
\|\operatorname{D}b_n(n)\|_{\mathbb H}^{\mathbb H}&=\frac{\rho_{{\mathbb H}}(n+1)}{\rho_{{\mathbb H}}(n)}\,b'_n(n) =\frac{1/(n+1)}{1/n}\,\frac12\left(\lambda_n-\frac{1}{\lambda_n n^2}\right)\\
&=\frac{n}{n+1}\left(1+1/n+O(1/n^2)\right)\\
&=1+O(1/n^2)\;\text{ as } n\to \infty,
\end{align*}
we see that
\[
\sum_{n=1}^{\infty}\left(1-\|\operatorname{D}b_n(a_n)\|_{\mathbb H}^{\mathbb H} \right)<\infty,
\]
which by Theorem \ref{thm:classification} implies that the sequence $(B_n)$ is semi-contracting.
\end{rem*}

\subsection{Forward compositions for which Theorem~\ref{thm:DM}, part (b) holds}\label{ADMb-holds}
In contrast to the two previous examples, where we saw that the second part of the ADM dichotomy does not hold in general, we now show that this second part does hold for certain types of forward compositions that form contracting sequences. Note that the following example is the basis of Example~\ref{ex:power}.

\begin{ex}\label{ex:z2}
Let $(a_n)$ be any increasing sequence on $[0,1)$ such that $a_0=0$ and $\lim_{n \to \infty}a_n=1$, and for $n\geq 0$ let
$
M_n(z)=\frac{z+a_n}{1+a_nz}.
$
For $n\in\N$ we define the Blaschke products
\[
b_n(z):=M_n\left((M_{n-1}^{-1}(z))^2\right),
\]
and
\begin{equation}\label{Gn-Mn}
B_n(z):=b_n\circ \cdots \circ b_1 (z)
= M_n(z^{2^n})=\frac{z^{2^n}+a_n}{1+a_nz^{2^n}}.
\end{equation}
Then, $B_n(z)\to 1$ for all $z\in \D$. If, in addition,
\begin{equation}\label{dvgt}
\sum_{n\geq 0}(1-B_n(0))=\sum_{n\geq 0}(1-a_n) =\infty,
\end{equation}
then the orbit of almost every point of $\partial\D$ under $(B_n)$ is dense in $\partial \D$.
\end{ex}

We need some preliminaries before we can prove Example \ref{ex:z2}. We first state a `weak independence' version of the second Borel--Cantelli lemma which (in the setting of forward compositions, rather than iterates) acts as a substitute for techniques from the proof of the Poincar\'e recurrence theorem, used by Doering and Ma\~n\'e to prove Theorem~\ref{thm:DM}, part~(b).

\begin{lem}[Borel--Cantelli]\label{thm:Borel-Cantelli}
If $E_n$ are measurable subsets of $[0,1]$ such that $\sum |E_n|= \infty$ and
\[
|E_m \cap E_n|\leq C |E_m|.|E_n|, \;\text{ for }n>m\geq L,
\]
where $C \geq 1$ and $L \in \mathbb{N}$, then
\[
|\limsup_{n \to \infty}E_n|=|\{x\in [0,1]: x \in E_n \text{ \rm{infinitely often}}\}|= \left|\bigcap_{N \geq 0} \bigcup_{n \geq N}E_n\right| >0.
\]
\end{lem}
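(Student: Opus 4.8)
The plan is to run the classical second-moment argument behind the Kochen--Stone form of the second Borel--Cantelli lemma, but to apply it not only to the full sequence but to every tail $(E_n)_{n\ge N_0}$, and then to upgrade the conclusion from a statement about unions to a statement about $\limsup$. To begin, fix an arbitrary index $N_0\ge L$ and, for $N\ge N_0$, set $S_N=\sum_{n=N_0}^{N}\mathbf 1_{E_n}$ and $T_N=\int_0^1 S_N=\sum_{n=N_0}^{N}|E_n|$, noting that $T_N\to\infty$ as $N\to\infty$ by the divergence hypothesis.

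The core estimate is a second-moment bound. Expanding the square and separating the diagonal,
\[
\int_0^1 S_N^2=\sum_{m=N_0}^{N}\sum_{n=N_0}^{N}|E_m\cap E_n|=\sum_{n=N_0}^{N}|E_n|+2\!\!\sum_{N_0\le m<n\le N}\!\!|E_m\cap E_n|\ \le\ T_N+C\,T_N^2,
\]
where the diagonal terms contribute $\sum|E_n|=T_N$ and the off-diagonal terms are bounded using $|E_m\cap E_n|\le C|E_m||E_n|$. Since $S_N=S_N\mathbf 1_{\{S_N>0\}}$, Cauchy--Schwarz gives $T_N^2=\bigl(\int_0^1 S_N\bigr)^2\le\bigl(\int_0^1 S_N^2\bigr)\,\bigl|\{S_N>0\}\bigr|$, hence
\[
\bigl|\{S_N>0\}\bigr|\ \ge\ \frac{T_N^2}{T_N+C\,T_N^2}\ =\ \frac{T_N}{1+C\,T_N}\ \xrightarrow[N\to\infty]{}\ \frac1C .
\]
Because $\{S_N>0\}=\bigcup_{n=N_0}^{N}E_n$ increases with $N$ to $\bigcup_{n\ge N_0}E_n$, continuity from below of Lebesgue measure yields $\bigl|\bigcup_{n\ge N_0}E_n\bigr|\ge 1/C$; the decisive point is that this lower bound is independent of $N_0$.

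Finally I would let $N_0\to\infty$: the sets $\bigcup_{n\ge N_0}E_n$ decrease to $\limsup_n E_n=\bigcap_{N_0\ge L}\bigcup_{n\ge N_0}E_n$, and since they all lie in $[0,1]$ and hence have finite measure, continuity from above gives $\bigl|\limsup_n E_n\bigr|=\lim_{N_0\to\infty}\bigl|\bigcup_{n\ge N_0}E_n\bigr|\ge 1/C>0$, which is the claim. The only genuinely delicate point --- and the step I would flag as the main obstacle --- is precisely this upgrade at the end: the second-moment computation on its own controls only a \emph{union} $\bigcup_{n\ge N_0}E_n$, so to reach the \emph{limsup} one must observe that the hypotheses $\sum_{n\ge N_0}|E_n|=\infty$ and $|E_m\cap E_n|\le C|E_m||E_n|$ are inherited by every tail of the sequence with one and the same constant $C$, which is what produces the uniform lower bound $1/C$ that survives the passage to the intersection over $N_0$.
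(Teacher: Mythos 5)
Your argument is correct, and it is the standard second-moment (Cauchy--Schwarz/Paley--Zygmund) proof in this area. A few observations. The paper does not actually supply a proof of this lemma; it cites Ciesielski--Taylor, Lamperti (``whose proof is admirably short''), and Petrov, and those references use essentially the same second-moment device you do, so you have in effect reproduced the intended argument rather than found a genuinely different one. Your write-up also quietly establishes the stronger bound $|\limsup_n E_n|\ge 1/C$, which the paper attributes to Petrov rather than to Ciesielski--Taylor or Lamperti, so you have proved a slightly sharper statement than the one asked for. Two details worth confirming, both of which you handle correctly: the diagonal terms $|E_n\cap E_n|=|E_n|$ must be separated out, since the hypothesis only controls $|E_m\cap E_n|$ for $n>m$, and you do this when you split $\int S_N^2 = T_N + 2\sum_{m<n}|E_m\cap E_n|$; and the ``tail uniformity'' step, i.e.\ that the same constant $C$ works for every tail $N_0\ge L$ so that the lower bound $1/C$ on $\bigl|\bigcup_{n\ge N_0}E_n\bigr|$ is independent of $N_0$, is exactly the point that lets you pass from unions to the $\limsup$ via continuity from above on the finite-measure space $[0,1]$. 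You correctly flag this as the crux. No gap.
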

Lemma~\ref{thm:Borel-Cantelli} is due to Ciesielski and Taylor \cite{Ciesielski} and independently to Lamperti \cite{Lamperti}, whose proof is admirably short. Another version is due to Petrov \cite{Petrov}, which has the same hypotheses but the stronger conclusion that $|\limsup_{n \to \infty}E_n|\ge 1/C$. Closely related results of this type are due Kochen and Stone \cite{Kochen-Stone}, and Yan \cite{Yan}.

We use Lemma~\ref{thm:Borel-Cantelli} to prove a `shrinking target' result. This description of a result that concerns the size of the set of points where a dynamical system visits infinitely often a sequence of balls whose size shrinks to~0 with~$n$ seems to originate with Hill and Velani \cite{HV} in the context of metric Diophantine approximation.
\begin{lem}[Shrinking target]\label{lem:shrinking}
Let $g(z)=z^2$ and let $(\epsilon_n)$ be any positive decreasing sequence in $(0,1]$ such that $\sum_{n=1}^{\infty}\epsilon_n=\infty$. Further assume that the arcs $I_n\subset \{e^{i\theta}: |\theta+\pi|\le \tfrac12\epsilon_n\}$ satisfy $|I_n| \ge c\epsilon_n$ for $n\in\N$, where $c>0$ is independent of~$n$. Then
\begin{equation}\label{g-inf}
|\{\zeta\in \partial \D: g^n(\zeta) \in I_n \text{ \rm{infinitely often}}\}|>0.
\end{equation}
\end{lem}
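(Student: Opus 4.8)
The plan is to apply the weak‑independence Borel--Cantelli lemma, Lemma~\ref{thm:Borel-Cantelli}, to the sets $E_n:=(g^n)^{-1}(I_n)=\{\zeta\in\partial\D:\zeta^{2^n}\in I_n\}$, $n\ge 1$, after identifying $\partial\D$ with $[0,1]$ via normalised arclength and writing $\lambda$ for normalised Lebesgue measure. Since $g(z)=z^2$ is the doubling map it preserves $\lambda$, so $\lambda(E_n)=\lambda(I_n)=|I_n|/(2\pi)\ge c\epsilon_n/(2\pi)$, and hence $\sum_n\lambda(E_n)=\infty$ by hypothesis. The conclusion \eqref{g-inf} will then follow from Lemma~\ref{thm:Borel-Cantelli} once we establish the quasi‑independence bound $\lambda(E_m\cap E_n)\le C\,\lambda(E_m)\,\lambda(E_n)$ for all $n>m\ge 1$, with $C$ depending only on $c$.

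To estimate $\lambda(E_m\cap E_n)$ I would use the factorisation $g^n=g^{n-m}\circ g^m$: a point $\zeta$ lies in $E_m\cap E_n$ precisely when $\zeta^{2^m}\in I_m\cap A_n$, where $A_n:=(g^{n-m})^{-1}(I_n)$. Applying once more that $\zeta\mapsto\zeta^{2^m}$ preserves $\lambda$ gives $\lambda(E_m\cap E_n)=\lambda(I_m\cap A_n)$, while $\lambda(A_n)=\lambda(I_n)$. Now $A_n$ is a disjoint union of $2^{n-m}$ arcs, each of angular length $|I_n|/2^{n-m}$, equally spaced around $\partial\D$ with angular period $2\pi/2^{n-m}$. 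The key geometric point is that, because $I_n$ lies in the arc of angular radius $\epsilon_n/2\le\tfrac12$ about $-1$ and $2^{n-m}$ is \emph{even} for $n>m$, every $2^{n-m}$‑th root of $-1$ lies at angular distance at least $\pi/2^{n-m}$ from $-1$, and consequently every point of $A_n$ lies at angular distance at least $(\pi-\epsilon_n/2)/2^{n-m}\ge(\pi-1)/2^{n-m}>2/2^{n-m}$ from $-1$.

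From this I split into two regimes. If $2^{n-m}\epsilon_m\le 2(\pi-1)$ --- which in particular holds whenever $n-m\le 2$, since $\epsilon_m\le 1$ --- then $I_m$, being contained in the arc of angular radius $\epsilon_m/2\le(\pi-1)/2^{n-m}$ about $-1$, is disjoint from $A_n$, so $\lambda(E_m\cap E_n)=0$. If instead $2^{n-m}\epsilon_m>2(\pi-1)>4$, then an arc of $A_n$ can meet $I_m$ only if its left endpoint lies in an interval of angular length at most $|I_m|+|I_n|/2^{n-m}\le 2\epsilon_m$ (using that $(\epsilon_n)$ is decreasing), so at most $2\epsilon_m\cdot 2^{n-m}/(2\pi)+1\le\epsilon_m 2^{n-m}$ of them do (the bound $\epsilon_m 2^{n-m}>4$ absorbs the $+1$); since each such arc has $\lambda$‑measure $\lambda(I_n)/2^{n-m}$, we get $\lambda(I_m\cap A_n)\le\epsilon_m\lambda(I_n)\le(2\pi/c)\lambda(I_m)\lambda(I_n)$, using $|I_m|\ge c\epsilon_m$. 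Thus the hypotheses of Lemma~\ref{thm:Borel-Cantelli} hold with $C=\max\{1,2\pi/c\}$ and $L=1$, and \eqref{g-inf} follows.

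The step I expect to be the real obstacle is exactly this quasi‑independence estimate for small $n-m$: a crude count of the arcs of $A_n$ meeting $I_m$ times their individual measure leaves an error term of order $\lambda(I_n)/2^{n-m}$ that is not in general dominated by $\lambda(I_m)\lambda(I_n)$. What rescues the argument is the parity observation above --- since the targets $I_n$ cluster near $-1$, whose $g$‑orbit is $-1\mapsto 1\mapsto 1\mapsto\cdots$, and $g^{n-m}$ doubles angles an even number of times, the preimage set $A_n$ genuinely avoids a fixed‑size (relative to $2^{-(n-m)}$) neighbourhood of $-1$, forcing $E_m\cap E_n$ to be \emph{empty} rather than merely small when $n-m$ is small. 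Everything else is routine counting for the doubling map, together with the passage from \eqref{g-inf} through Lemma~\ref{thm:Borel-Cantelli}.
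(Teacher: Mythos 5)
Your proof is correct and follows essentially the same approach as the paper's: you establish quasi-independence of the sets $E_n$ via the same parity observation — preimages of arcs near $-1$ avoid $-1$ because $-1$ maps to the fixed point $1$, so $E_m\cap E_n=\emptyset$ when $n-m$ is small, and a direct count of overlapping arcs gives the bound otherwise — and then apply Lemma~\ref{thm:Borel-Cantelli}. The only differences are cosmetic: you invoke measure-preservation of $g^m$ to reduce the estimate of $\lambda(E_m\cap E_n)$ to $\lambda\bigl(I_m\cap(g^{n-m})^{-1}(I_n)\bigr)$, and you treat a general subarc $I_n$ directly rather than first handling the full target arcs $\{e^{i\theta}:|\theta+\pi|\le\tfrac12\epsilon_n\}$ and restricting at the end, as the paper does.
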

Lemma~\ref{lem:shrinking} can also be deduced from a more general and more precise result due to Philipp~\cite[Theorem~2A]{Philipp}, which is based on another version of the second Borel--Cantelli lemma~\cite[Theorem~3]{Philipp}. Since the details of the proof in~\cite{Philipp} are involved and we do not need the extra precision here, we include a proof of Lemma~\ref{lem:shrinking} based solely on Lemma~\ref{thm:Borel-Cantelli}.
\begin{proof}[Proof of Lemma~\ref{lem:shrinking}]
First, $g^{-n}(\{e^{i\theta}: |\theta+\pi|\le \tfrac12\epsilon_n\})$ consists of $2^n$ arcs, each of length $\epsilon_n/2^n$ and centred at the $2^n$-th roots of $-1$. Let $A_n$ denote the union of these $2^n$ arcs; see Figure~\ref{fig:thmz2}.

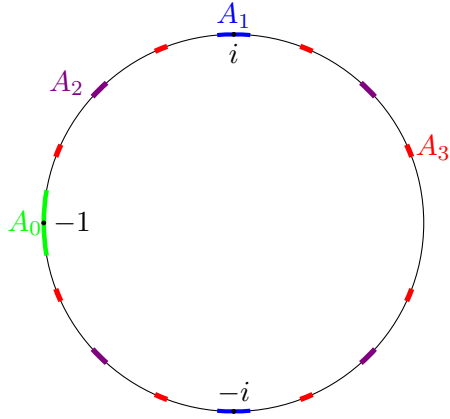
\begin{figure}[hbt!]
\centering
\begin{tikzpicture}[scale=2.5] 
\draw (0,0) circle [radius=1];
\draw[line width=0.6mm, green] (cos{170},sin{170}) arc(170:190:1);
\draw[line width=0.55mm, blue] (cos{85},sin{85}) arc(85:95:1);
\draw[line width=0.55mm, blue] (cos{265},sin{265}) arc(265:275:1);
\draw[line width=0.7mm, violet] (cos{42},sin{42}) arc(42:48:1);
\draw[line width=0.7mm, violet] (cos{132},sin{132}) arc(132:138:1);
\draw[line width=0.7mm, violet] (cos{222},sin{222}) arc(222:228:1);
\draw[line width=0.7mm, violet] (cos{312},sin{312}) arc(312:318:1);
\draw[line width=0.7mm, red] (cos{20.5},sin{20.5}) arc(20.5:24.5:1);
\draw[line width=0.7mm, red] (cos{65.5},sin{65.5}) arc(65.5:69.5:1);
\draw[line width=0.7mm, red] (cos{110.5},sin{110.5}) arc(110.5:114.5:1);
\draw[line width=0.7mm, red] (cos{155.5},sin{155.5}) arc(155.5:159.5:1);
\draw[line width=0.7mm, red] (cos{200.5},sin{200.5}) arc(200.5:204.5:1);
\draw[line width=0.7mm, red] (cos{245.5},sin{245.5}) arc(245.5:249.5:1);
\draw[line width=0.7mm, red] (cos{290.5},sin{290.5}) arc(290.5:294.5:1);
\draw[line width=0.7mm, red] (cos{335.5},sin{335.5}) arc(335.5:339.5:1);
\draw[fill] (-1,0) circle [radius=0.01];
\draw[fill] (0,1) circle [radius=0.01];
\draw[fill] (0,-1) circle [radius=0.01];
\node at (-0.85,0){$-1$};
\node at (0,-0.9){$-i$};
\node at (0,0.9){$i$};
\node at (0,1.1){\color{blue}$A_1$};
\node at (-1.1,0){\color{green}$A_0$};
\node at (-0.88,0.74){\color{violet}$A_2$};
\node at (1.05,0.4){\color{red}$A_3$};
\end{tikzpicture}
\caption{\label{fig:thmz2} The sets $A_n$.}
\end{figure}

Then
\begin{equation}\label{Anprops}
|A_n|= 2^n \times \frac{\epsilon_n}{2^n}= \epsilon_n,\quad n \geq 1.
\end{equation}

We will now find an upper bound for the length $|A_m \cap A_n|$ whenever $n>m\ge 1$. We consider two cases. First, take $n=m+p$, where $p\in\N$ is so small that $\frac{1}{2^{p+1}}\geq \epsilon_m$. The centre points of the arcs of $A_m$ are the $2^m$-th roots of $-1$, which are of the form $\exp(2\pi i \theta_{k,m}),$ where $\theta_{k,m}=k/2^{m+1}$ and $k \in \{1,\ldots, 2^{m+1}-1\}$ is odd.
Therefore the centre points of the arcs of $A_{m+p}$ that are closest to the arc of $A_m$ with centre $\exp(2\pi i \theta_{k,m})$, where $k \in \{1,\ldots, 2^{m+1}-1\}$ is odd, are
\[
\exp(2\pi i((2^pk\pm 1)/2^{m+p+1})=\exp(2\pi i (\theta_{k,m} \pm 1/2^{m+p+1})).
\]
Since $\frac{1}{2^{p+1}}\geq \epsilon_m$ and the sequence $(\epsilon_n)$ is decreasing, we deduce that
\[
\frac{1}{2}.\frac{\varepsilon_m}{2^m}+\frac{1}{2}.\frac{\varepsilon_{m+p}}{2^{m+p}}<\frac{\varepsilon_m}{2^m}\leq \frac{1}{2^{m+p+1}},
\]
so the  arcs of $A_{m+p}$ closest to  $\exp(2\pi i \theta_{k,m})$ do not meet the arc of $A_m$ centred at that point. Hence, for this range of values of~$n$ we have $A_m \cap A_n = \emptyset$.

Now we consider the case where $n=m+p, p\in \N$ and $\frac{1}{2^{p+1}}<\varepsilon_m$. Recall that each of the $2^m$ arcs of $A_m$ has length $\varepsilon_m/2^m$ and the angle between the centres of the arcs $A_{m+p}$ is $2\pi/2^{m+p}$. Hence the number of arcs of $A_{m+p}$ that can meet each arc of $A_m$ is at most
\[
\frac{\varepsilon_m/2^m}{2\pi/2^{m+p}}+1= \frac{2^p \varepsilon_m}{2\pi}+1,
\]
we deduce in this case, and hence in all cases, that
\begin{align}
\label{Am-An}
|A_m \cap A_n| &= |A_m \cap A_{m+p}|\nonumber\\
& \le 2^m\left(\frac{2^p \varepsilon_m}{2\pi}+1\right)\frac{\varepsilon_{m+p}}{2^{m+p}} \nonumber \\
&= \varepsilon_m \varepsilon_{m+p}\left(\frac{1}{2\pi}+\frac{1}{2^p\varepsilon_m}\right) \nonumber \\
&\leq  3|A_m|.|A_{n}|.
\end{align}
We now map the sets $A_m$ by $z\mapsto \arg z/(2\pi)$ and apply Lemma~\ref{thm:Borel-Cantelli} to the resulting subsets, $E_m$ say, of $[0,1]$. Since
\[
|E_m\cap E_n|\le 6\pi |E_m|.|E_n|, \quad\text{for }n>m\ge 1,
\]
we deduce by \eqref{Anprops} and Lemma~\ref{thm:Borel-Cantelli} that
\[
|\limsup_{n \to \infty}E_n|>0,
\]
and hence \eqref{g-inf} holds in the case when $I_n = \{e^{i\theta}: |\theta+\pi|\le \tfrac12\epsilon_n\}$, for $n\in\N$.

In the general case that $I_n \subset \{e^{i\theta}: |\theta+\pi|\le \tfrac12\epsilon_n\}$, for $n\in\N$, let $A'_n=g^{-n}(I_n)$. Then $A'_n$ consists of $2^n$ arcs, each one contained in a different arc of $A_n$. Note that the arcs in $A'_n$ have length at least $c|A_n|= c\,\epsilon_n/2^n$, so $|A'_n|\ge c\epsilon_n$.  Then, by \eqref{Am-An}, we have for all $n\in\N$
\begin{align*}
\label{Am-An}
|A'_m \cap A'_n| &\le |A_m \cap A_n|\\
&\le 3|A_m|.|A_{n}|\\
&\le (3/c^2) |A'_m|.|A'_{n}|.
\end{align*}
Therefore, we can once again apply Lemma~\ref{thm:Borel-Cantelli} to deduce that \eqref{g-inf} holds in general.
\end{proof}
 To complete the proof of Example~\ref{ex:z2} we need the following property of the sequence $(B_n)$ defined in \eqref{Gn-Mn}.
\begin{lem}\label{contracting}
The sequence of maps $B_n:\D \to \D, n\ge1,$ is contracting.
\end{lem}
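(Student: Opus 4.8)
The plan is to exploit the fact that each $M_n$ is a conformal automorphism of $\D$ and hence a hyperbolic isometry, so that in the hyperbolic metric the maps $M_n$ play no role. Starting from the explicit formula \eqref{Gn-Mn}, namely $B_n(z)=M_n(z^{2^n})$ with $M_n$ an automorphism of $\D$ (it has the form $\frac{z-b}{1-\bar b z}$ with $b=-a_n$ and $|b|<1$), we get for all $z,z'\in\D$ and all $n\in\N$ that
\[
\dist_\D(B_n(z),B_n(z'))=\dist_\D\bigl(M_n(z^{2^n}),M_n((z')^{2^n})\bigr)=\dist_\D\bigl(z^{2^n},(z')^{2^n}\bigr).
\]

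Next I would note that for fixed $z,z'\in\D$ we have $|z|,|z'|<1$, so $|z|^{2^n}\to0$ and $|z'|^{2^n}\to0$, i.e.\ $z^{2^n}\to0$ and $(z')^{2^n}\to0$ in $\D$ as $n\to\infty$. Since the hyperbolic distance is jointly continuous on $\D\times\D$, it follows that $\dist_\D(z^{2^n},(z')^{2^n})\to\dist_\D(0,0)=0$ as $n\to\infty$, which is precisely the statement that $(B_n)$ is contracting on $\D$, as required.

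There is essentially no genuine obstacle here: the whole point is the observation that the outer automorphisms $M_n$ cancel in the hyperbolic distance, reducing the problem to the elementary fact that high powers of a point of $\D$ converge to the origin, together with the joint continuity of $\dist_\D$. One could alternatively deduce the lemma from the hyperbolic classification theorem (Theorem~\ref{thm:classification}) by checking that $\sum_n(1-\lambda_n(z))=\infty$ for the hyperbolic distortions $\lambda_n$ of the factors $b_n$, but the direct argument above is shorter and avoids computing derivatives.
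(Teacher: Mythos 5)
Your proof is correct, and it takes a genuinely more direct route than the paper's. The key step is the same observation the paper exploits implicitly in its distortion computation, namely that the M\"obius factors $M_n$ cancel in hyperbolic distance:
\[
\dist_\D\bigl(B_n(z),B_n(z')\bigr)=\dist_\D\bigl(z^{2^n},(z')^{2^n}\bigr),
\]
and then $z^{2^n},(z')^{2^n}\to 0$ together with joint continuity of $\dist_\D$ (or just the triangle inequality $\dist_\D(z^{2^n},(z')^{2^n})\le\dist_\D(z^{2^n},0)+\dist_\D(0,(z')^{2^n})$) finishes the argument. The paper instead computes the hyperbolic distortion $\|\operatorname{D}B_n(0)\|_{\D}^{\D}$, shows it equals $0$ for every $n$, and then invokes the hyperbolic classification theorem (Theorem~\ref{thm:classification}) via the criterion $\sum_n(1-\lambda_n)=\infty$. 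That route buys nothing here beyond illustrating the classification machinery; your argument is shorter, verifies Definition~\ref{cont} directly, and sidesteps the derivative computation as well as the slight bookkeeping issue of relating $\|\operatorname{D}B_n(0)\|$ (the distortion of the composition) to the $\lambda_n$ (the distortions of the individual factors $b_n$) that Theorem~\ref{thm:classification} actually refers to. You also correctly flag that the paper's distortion-based approach is available as an alternative.
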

\begin{proof}
We evaluate the hyperbolic distortion along the orbit of 0 under $B_n$. By~\eqref{Gn-Mn}, for each $n\in\N$:
\begin{eqnarray}
\|\operatorname{D}B_n(0)\|_{\D}^{\D}&=& \lim_{z\to 0} \frac{\operatorname{dist}_{\D}(B_n(z),B_n(0))}{\operatorname{dist}_{\D}(z,0)}\nonumber \\
&=& \lim_{z\to 0} \frac{\operatorname{dist}_{\D}(z^{2^n},0)}{\operatorname{dist}_{\D}(z,0)}\nonumber \\
&=& \lim_{z\to 0} \log \frac{1+|z|^{2^n}}{1-|z|^{2^n}}\Big/\log \frac{1+|z|}{1-|z|}=0.\nonumber
\end{eqnarray}
It follows that
\[\sum_{n=1}^{\infty} (1-\|\operatorname{D}B_n(0)\|_{\mathbb{D}}^{\mathbb{D}})= \infty,\]
and so $(B_n)$ is indeed contracting, by Theorem~\ref{thm:classification}.
\end{proof}

\begin{proof}[Proof of Example~\ref{ex:z2}]
Since $B_n(0)=a_n\to 1$ as $n \to \infty$, it is clear that $B_n(x)\to 1$ for all $x\in (0,1)$, so this convergence holds throughout $\D$ by Theorem~A.

Next we assume that \eqref{dvgt} holds. To complete the proof it is sufficient to show that if~$T$ is any non-trivial closed arc of $\partial \D \setminus \{1\}$, then almost every point of $\partial \D$ has an orbit under $(B_n)$ that visits $T$ infinitely often. To do this we choose an arc $S\subset \partial \D$ with centre~1 so that $T\subset S^c$. It follows from Lemma~\ref{lem:preimages under Moebius maps} that there are constants $c_S>c_T>0$ such that, for all $n \ge 1$ large enough, the preimage $M_n^{-1}(S^c)$ contains an arc of $\partial \D$ centred at $-1$ of length $c_S(1-a_n):=\epsilon_n,$ say, and the preimage $M_n^{-1}(T)$ contains an arc $I_n\subset \{e^{i\theta}: |\pi+\theta|\le \tfrac12 \epsilon_n\}$ of length at least $c_T(1-a_n)=(c_T/c_S)\epsilon_n$. By hypothesis, $\sum_{n=1}^{\infty}\epsilon_n = \infty$, and also $(\epsilon_n)$ is decreasing since $(a_n)$ is increasing.

Now, since $M_n(I_n)\supset T$,
\begin{align*}
\limsup_{n \to \infty} B_n^{-1}(T) &= \{\zeta \in \partial \mathbb{D}:B_n(\zeta) \in T \text{ infinitely often}\}\\
&\supset  \{\zeta \in \partial \mathbb{D}:g^n(\zeta) \in I_n \text{ infinitely often}\},
\end{align*}
so, by Lemma~\ref{lem:shrinking},
\begin{equation}\label{T-positive}
|\limsup_{n \to \infty} B_n^{-1}(T)|>0.
\end{equation}

We now use Lemma~\ref{contracting} and Theorem~\ref{thm:Pomgen}, part~(a) to deduce from \eqref{T-positive} that $\limsup_{n \to \infty} B_n^{-1}(T)$ actually has full measure with respect to $\partial \mathbb{D}$, which implies that $B_n(\zeta)\in T$  infinitely often for almost every $\zeta \in \partial \mathbb{D}$, as required.

To do this we define, for $n \ge 0$,
\begin{equation}\label{eq:Pom1}
L_n:= \limsup_{m \to \infty} (b_{n+m}\circ \cdots \circ b_{n+1})^{-1}(T),
\end{equation}
that is, the set of $\zeta\in \partial \D$ whose orbit under the sequence of maps $b_{n+m}$, $m\ge 1$, visits~$T$ infinitely often; in particular,
\[
L_0:= \limsup_{m \to \infty} B_n^{-1}(T).
\]
By construction we have $L_0=B_n^{-1}(L_n)$, for $n\ge 0$, and so the set $\limsup_{n \to \infty} B_n^{-1}(T)$ has full measure, by \eqref{T-positive}, Lemma~\ref{contracting} and Theorem~\ref{thm:Pomgen}, part~(a), as required.
\end{proof}
\begin{rem*}\label{rem8}
Example~\ref{ex:z2} can be generalised considerably; for example, in Lemma~\ref{lem:shrinking} we can replace the map $g(z)=z^2$ by $g(z)=z^p$ for any integer $p\ge 2$ by applying \cite[Theorem~2A]{Philipp} instead of the direct proof of the lemma given here. And it seems plausible that in Example~\ref{ex:z2} we may be able to replace the function~$g(z)=z^2$ used in the definition of $b_n$ by   any contracting Blaschke product (or even inner function), that fixes~0, by using a recent result \cite[Theorem~5.11]{Rosenblatt} which gives a shrinking target result for inner functions that are ergodic on $\partial \D$, and hence contracting   by \cite[Theorem~3.1]{doering-mane}.
\end{rem*}

\subsection{Part~(b) of the ADM dichotomy} \label{ADMproof}
Earlier in this section we showed that part~(b) of the ADM dichotomy does not hold in general in the setting of forward compositions, at least in the isometric and semi-contracting cases. This leaves open the possibility that it holds in the contracting case.

To provide some evidence that this may be the case (apart from our failure to produce a contracting counter-example!), we outline briefly how to prove Theorem~\ref{thm:DM}, part~(b) by using results of Aaronson, Doering and Ma\~n\'e, and Neuwirth, most of which appear in the text \cite{Aar97}; see also \cite[Section~2]{BFJKboundaries} for a convenient summary of the concepts from ergodic theory mentioned here and the relationships amongst them.

All the steps in the proof concern the behaviour of the inner function $f$ on the boundary of $\D$. (Recall that an inner function is defined at almost all boundary points.)
\begin{itemize}
\item the condition that $\sum_{n\geq 0}(1-|f^n(z)|)=\infty$ for some $z\in\D$ implies that~$f$ is conservative and hence ergodic on $\partial \D$, by \cite[Propositions~6.1.7 and~6.1.8]{Aar97};
\item since $f$ is an inner function it is non-singular with respect to the Lebesgue measure on $\partial \D$, by  \cite[Proposition~6.1.1]{Aar97};
\item
hence~$f$ is non-singular, conservative and ergodic on $\partial \D$, which implies, by \cite[Proposition~1.2.2]{Aar97}, that for any set~$E$ of positive measure in $\partial \D$, the iterates $f^n(\zeta)$ visit~$E$ infinitely often for almost every $\zeta \in \partial \D$, a strong recurrence property which implies the density of orbits property stated in Theorem~\ref{thm:DM}, part~(b).
\end{itemize}

The reader may wish to compare these arguments to those in the proof of \cite[Theorem C]{BFJKboundaries} which uses a slightly different path to arrive at the same conclusion.

The relevance of the proof outlined above to our question about contracting sequences is as follows. We see from the first step in the proof that, for an inner function~$f$, the divergence condition given in Theorem~\ref{thm:DM}, part~(b) implies the ergodic property, and we know from \cite[Theorem~3.1]{doering-mane} that the iterates of an inner function which is ergodic on $\partial \D$ form a contracting sequence in the disc.

So, it is reasonable to ask whether some version of part (b) of the ADM dichotomy holds for {\it any} contracting composition (or indeed sequence) of holomorphic functions between simply connected domains. However, concepts like invariance,  ergodicity, recurrence and density of orbits only make sense when the domains $U_n$ are all identical so that boundary points can be identified. In particular, these concepts do not make sense for wandering domains.

We can, however, ask whether a weaker version of part (b) of the dichotomy holds, with a different type of proof. More precisely, suppose that $F_n:U \to U_n$ is a contracting forward composition (or perhaps sequence) of holomorphic functions between simply connected domains for which interior orbits converge to the boundary sufficiently slowly (in some precise sense that takes the geometry of the domains into account). Then it is plausible that the Denjoy--Wolff set has measure zero.

To prove such a result, possible tools include versions of the {\it second} Borel--Cantelli lemma (such as the one used to prove Example~\ref{ex:z2}) and L\"owner's lemma (the case of equality, mentioned in the remark following Lemma~\ref{thm:Ransford non continuous}), together with Pommerenke's strong boundary mixing result \cite[Theorem~1]{Pommerenke-ergodic} in the case of forward compositions. Such an approach would complement our proof of the generalisation of part~(a) of the ADM dichotomy, where we used the L\"owner's lemma inequality, together with an argument similar to that used to prove the {\it first} Borel--Cantelli lemma. The Borel--Cantelli lemmas have a dichotomy similar to Theorem~\ref{thm:DM}, in which convergent and divergent series lead to sets of zero measure or positive measure, respectively, and in that sense they are a natural tool to attack this problem.

\section{Versions of results with the spherical metric}\label{sec:spherical}
It is natural to ask to what extent our results hold when the Euclidean metric for measuring distances to the boundary of a domain is replaced by the spherical metric. Using the spherical metric has the advantage that sequences tending to $\infty$ are included but the disadvantage that points may be close together in the spherical metric but far apart in the Euclidean metric. As an illustration, consider the sequence
\[
F_n(z)=M_n(z)+n, \quad z\in \D,\; n=1,2,\ldots,
\]
where $(M_n)$ is the sequence of M\"obius self-maps of $\D$ in Example~\ref{ex:isometric}. In this case, the spherical distances between all pairs of points of $\ov{F_n(\D)}$ tends to 0 as $n\to\infty$, but the subtle behaviour of $F_n$ on $\partial \D$ is only seen when using the Euclidean metric.

Also, our hypotheses on the functions $F_n: U\to U_n$ are so general that Theorem~A does not hold when expressed in terms of the spherical metric. For example, sequences of the form
\begin{equation}\label{non-normal1}
F_n(z) = C_nB(z)+z,\quad z\in\D,\; n=1,2,\ldots,
\end{equation}
where $B$ is a Blaschke product, $C_n\to\infty$ as $n\to\infty$, $U=\D$ and $U_n=\{z:|z|<|C_n|+1\}$, and ones of the form
\begin{equation}\label{non-normal2}
F_n(z) = (2z)^{p_n},\quad z\in\D,\; n=1,2,\ldots,
\end{equation}
where $p_n\in \N$ and $p_n\to\infty$ as $n\to \infty$, $U=\D$ and $U_n=\{z:|z|<2^{p_n}\}$, show that we can have $F_n(z)\to \infty$ as $n\to\infty$ for some $z\in \D$ but not all. However, it follows easily from the Euclidean version of Theorem~A that with the extra hypothesis of normality we have the following spherical version; here we denote spherical distance by $\chi$.
\begin{thm}\label{DWthm-spher}
Let $F_n: U \to U_n$ be a sequence of holomorphic maps between simply connected domains and suppose that $(F_n)$ is normal in $U$. If there exists $z_0 \in U$ such that
\[
\chi(F_n(z_0), \partial U_n) \to 0 \;\text{ as } n \to \infty,
\]
then, for all $z \in U$,
\[
\chi(F_n(z),F_n(z_0)) \to 0 \;\text{ as } n \to \infty.
\]
\end{thm}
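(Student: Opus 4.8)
The plan is to argue by contradiction, using the normality hypothesis to extract a locally uniform (spherical) limit and then transferring the problem to the Euclidean statement already in hand, Theorem~\ref{thm:A+}. So suppose the conclusion fails: there are $z^*\in U$, $\epsilon>0$ and a subsequence $(n_k)$ with $\chi(F_{n_k}(z^*),F_{n_k}(z_0))\ge\epsilon$ for all $k$. Since $(F_n)$ is normal in $U$, after passing to a further subsequence I may assume $F_{n_k}\to g$ locally uniformly in $U$ with respect to $\chi$, where $g\colon U\to\hat\C$ is either identically $\infty$ or meromorphic. If $g\equiv\infty$, then $\chi(F_{n_k}(z),\infty)\to0$ for every $z\in U$, so the triangle inequality for $\chi$ immediately gives $\chi(F_{n_k}(z^*),F_{n_k}(z_0))\to0$, a contradiction; hence $g$ is meromorphic, and it is non-constant since $\chi(g(z^*),g(z_0))=\lim_k\chi(F_{n_k}(z^*),F_{n_k}(z_0))\ge\epsilon$.

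The heart of the argument is to rule out the possibility $g(z_0)=\infty$, and this is the one non-routine step. Suppose $g$ had a pole at $z_0$. I would pick $\delta>0$ with $\overline{D(z_0,\delta)}\subset U$ so that $g$ has its only pole in $\overline{D(z_0,\delta)}$ at $z_0$, with $|g|\ge3$ on $\overline{D(z_0,\delta)}\setminus\{z_0\}$, and so that $1/g$ is holomorphic on $D(z_0,\delta)$ with its only zero at $z_0$. Since $F_{n_k}\to g$ uniformly on $\overline{D(z_0,\delta)}$ in the spherical metric, $|F_{n_k}|\ge2$ there for all large $k$, so $1/F_{n_k}$ is holomorphic and non-vanishing on $D(z_0,\delta)$. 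Because inversion $w\mapsto1/w$ is a spherical isometry and $1/g$ takes values in a fixed bounded neighbourhood of $0$ on this disc, $1/F_{n_k}\to1/g$ uniformly on $\overline{D(z_0,\delta)}$ in the \emph{Euclidean} sense (spherical and Euclidean metrics being comparable on bounded sets). Since $1/g$ vanishes at $z_0$ but is not identically $0$, Hurwitz's theorem is violated. Hence $g(z_0)\in\C$.

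Finally, with $g(z_0)\in\C$ one has $F_{n_k}(z_0)\to g(z_0)$ in $\C$. Choosing $\zeta_{n_k}\in\partial U_{n_k}$ with $\chi(F_{n_k}(z_0),\zeta_{n_k})=\chi(F_{n_k}(z_0),\partial U_{n_k})\to0$ and again using comparability of the two metrics near the finite point $g(z_0)$, I get $|F_{n_k}(z_0)-\zeta_{n_k}|\to0$, hence $\dist(F_{n_k}(z_0),\partial U_{n_k})\to0$. Then \eqref{eucl-hyp} in Theorem~\ref{thm:A+} gives $|F_{n_k}(z^*)-F_{n_k}(z_0)|\le 2de^{2d}\dist(F_{n_k}(z_0),\partial U_{n_k})\to0$ with $d=\dist_U(z^*,z_0)$, so $\chi(F_{n_k}(z^*),F_{n_k}(z_0))\to0$, contradicting the choice of the subsequence. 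The main obstacle is the Hurwitz step excluding a pole of $g$ at $z_0$ (this is exactly where the genuinely new feature of the spherical setting — orbit points escaping to $\infty$ — has to be handled); everything else is a routine translation between the spherical and Euclidean pictures together with the existing Euclidean estimate.
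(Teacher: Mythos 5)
Your proof is correct and takes essentially the same route the paper has in mind: the authors remark that the theorem ``follows easily from the Euclidean version of Theorem~A'' and that the role of normality is precisely to guarantee that when a subsequence $F_{n_k}(z_0)\to\infty$, then $F_{n_k}(z)\to\infty$ for all $z$, which is exactly what your Hurwitz step (excluding a pole of the spherical local uniform limit $g$, since the $F_n$ are holomorphic with values in $\C$) plus the reduction to Theorem~\ref{thm:A+} accomplishes. The only thing you spelled out that the paper leaves implicit is that Hurwitz argument, and it is carried out correctly.
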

The role of the normality hypothesis in Theorem~\ref{DWthm-spher} is to ensure that if $\chi(F_{n_k}(z_0),\infty) \to 0$ as $k\to\infty$ for any subsequence $(F_{n_k})$, then $\chi(F_{n_k}(z),\infty) \to 0$ as $k\to\infty$ for all $z\in U$.

Next we state a spherical version of Theorem~C in which the assumption of normality is not required.
\begin{thm}\label{thm:fast-spher}
Let $F_n:U \to U_n$ be a sequence of holomorphic maps between simply connected domains, each with a full radial extension to $\partial U$, and suppose  that there exists $z_0 \in U$ such that
\begin{equation} \label{eq:power-spher}
\sum_{n=0}^{\infty} \operatorname{dist_{\chi}}(F_n(z_0),\partial U_n)^{1/2}<\infty.
\end{equation}
Then for almost all points $\zeta \in  \partial U$ we have
\[
\chi(F_n(\zeta),F_n(z_0))\to 0 \;\;\text{as}\;n \to \infty.
\]
\end{thm}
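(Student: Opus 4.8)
The plan is to transcribe the proof of Theorem~\ref{thm:general result} almost line by line, replacing the Euclidean distance by the spherical distance~$\chi$ and Euclidean discs by spherical discs $D_\chi(\zeta,r):=\{w:\chi(w,\zeta)<r\}$, the one genuinely new ingredient being a \emph{spherical $\tfrac12$-harmonic measure condition} that holds uniformly along the orbit. First I would reduce to the case $U=\D$, $z_0=0$ exactly as in the proof of Theorem~\ref{thm:general result}: pick a Riemann map $\phi_U:\D\to U$ with $\phi_U(0)=z_0$, use the conformal invariance of harmonic measure, and note that the spherical conclusion $\chi(F_n(\zeta),F_n(0))\to0$ transfers back to general~$U$ in the same way the Euclidean statement does there.

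The heart of the matter is a uniform estimate. For each large~$n$ I would choose a \emph{finite} point $\zeta_n\in\partial U_n$ with $\chi(F_n(0),\zeta_n)\le 2\dist_\chi(F_n(0),\partial U_n)$ (a near-minimiser rather than an exact one, to sidestep the possibility $\infty\in\partial U_n$), and let $\sigma_n$ be the rotation of the Riemann sphere with $\sigma_n(\zeta_n)=0$, for instance $\sigma_n(w)=(w-\zeta_n)/(\bar\zeta_n w+1)$. Since $\sigma_n$ is a chordal isometry it carries $D_\chi(\zeta_n,r)$ onto the round disc $D(0,\rho_r)$, with $\rho_r$ depending only on~$r$, and it preserves~$\chi$. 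Applying the Milloux--Schmidt inequality, Lemma~\ref{MS}, to the subharmonic extension by~$0$ of the harmonic measure $\omega(\,\cdot\,,\partial V'\cap\sigma_n(U_n),V')$ — where $V'$ is the component of $\sigma_n(U_n)\cap D(0,\rho_r)$ containing $\sigma_n(F_n(0))$ — exactly as in the proof of Lemma~\ref{lem:types of geometry}(a), and using that $0\in\partial\sigma_n(U_n)$ lies at the centre together with $|z|\le\chi(z,0)$ for $|z|\le1$, I expect to get
\[
\omega\big(F_n(0),\,\partial V\cap U_n,\,V\big)\le C(r)\,\chi(F_n(0),\zeta_n)^{1/2},\qquad C(r)=\tfrac{4}{\pi}\rho_r^{-1/2},
\]
uniformly in~$n$, where $V$ is the component of $U_n\cap D_\chi(\zeta_n,r)$ containing $F_n(0)$.

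With this in hand the rest is a transcription of the proof of Theorem~\ref{thm:general result}. I would set $S_n(r)=U_n\cap D_\chi(\zeta_n,r)$, let $E(r)$ be the set of $\zeta\in\partial\D$ whose orbit eventually lies in $\overline{S_n(r)}$, and define $V_n(r)\subset U_n$ and $W_n(r)\subset\D$ as there, with $\Gamma_n(r)=\partial V_n(r)\cap U_n$, now an arc system on the Euclidean circle $\partial D_\chi(\zeta_n,r)$. One checks that $W_n(r)$ is a Jordan domain via MacLane's theorem, applied as in the proof of Theorem~\ref{thm:general result} to the holomorphic function $z\mapsto F_n(z)-m_n$, where $m_n$ is the Euclidean centre of $D_\chi(\zeta_n,r)$; hence $F_n:W_n(r)\to V_n(r)$ has a full radial extension and Theorem~\ref{Cor:expansion} applies. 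The same chain of inclusions and maximum-principle inequalities as in the proof of Theorem~\ref{thm:general result}, followed by the displayed estimate and the hypothesis $\sum_n\dist_\chi(F_n(0),\partial U_n)^{1/2}<\infty$ from~\eqref{eq:power-spher}, then gives $\omega(0,\partial\D\setminus E(r),\D)=0$ for every $r>0$. Intersecting over $r_k\to0$ shows that for almost every~$\zeta$ one has $\chi(F_n(\zeta),\zeta_n)\to0$, so $\chi(F_n(\zeta),F_n(0))\le\chi(F_n(\zeta),\zeta_n)+\chi(\zeta_n,F_n(0))\to0$ since $\chi(\zeta_n,F_n(0))\le2\dist_\chi(F_n(0),\partial U_n)\to0$; transferring back through $\phi_U$ finishes the argument.

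The main obstacle is precisely the uniformity in the spherical $\tfrac12$-harmonic measure condition. A spherical disc $D_\chi(\zeta_n,r)$ is a round Euclidean disc, but its centre and radius degenerate as $\zeta_n\to\infty$, so a naive substitution of $\chi$ for $\dist$ in the proof of Theorem~\ref{thm:general result} leaves an error term of order~$r$ in the harmonic-measure bound that does not decay with~$n$. Normalising by the rotation $\sigma_n$ removes this — at the origin a spherical disc genuinely is a Euclidean disc centred at the origin with $n$-independent radius — but one must then verify that this change of coordinates preserves the full-radial-extension hypothesis on $F_n$ and the MacLane Jordan-domain property of $W_n(r)$. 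Both should be routine since $\sigma_n$ is conformal and continuous on all of $\hat\C$, but they are the points that need care; so too is the bookkeeping that $\partial D_\chi(\zeta_n,r)$ is a genuine circle (not a line through $\infty$) once $\zeta_n$ is finite and $r$ is small.
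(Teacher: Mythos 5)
There is a genuine gap, and it sits precisely in the step you flag as the ``one genuinely new ingredient''. Your uniform spherical $\tfrac12$-harmonic measure estimate breaks down when the spherical disc $D_\chi(\zeta_n,r)$ contains $\infty$. After normalising by the rotation $\sigma_n$ you get the Euclidean disc $D(0,\rho_r)$, but $\sigma_n(U_n)$ may then contain $\infty$, in which case $\hat\C\setminus\sigma_n(U_n)=\sigma_n(\hat\C\setminus U_n)$ is a \emph{bounded} continuum; its Euclidean diameter is comparable to the \emph{spherical} diameter of $\hat\C\setminus U_n$, which can tend to $0$ under the hypotheses of the theorem. Once that diameter drops below $\rho_r$, the continuum no longer meets every circle $\{|z|=\rho\}$, $0<\rho<\rho_r$, so the Milloux--Schmidt hypothesis $\min_{|z|=\rho}u=0$ fails and the constant $C(r)$ cannot be taken independent of $n$. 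A related and equally serious problem is that in exactly this situation $V_n(r)=U_n\cap D_\chi(\zeta_n,r)$ is \emph{doubly connected} (a spherical disc with a small continuum removed from its interior), not simply connected: the maximum-principle argument from the proof of Theorem~\ref{thm:general result} that makes $V_n(r)$ and $W_n(r)$ simply connected relies on both $\hat\C\setminus U_n$ and $\hat\C\setminus D(\zeta_n,r)$ containing $\infty$, and that is lost once $\infty\in D_\chi(\zeta_n,r)$. With $V_n(r)$ multiply connected, Theorem~\ref{Cor:expansion} (which is proved only for simply connected domains), the MacLane/Jordan-domain argument for $W_n(r)$, and the Milloux--Schmidt step all break.

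This is not a technicality: a concrete example shows the claimed bound is false in this regime. Take $U_n=\C\setminus[n^3,\infty)$, $F_n:\D\to U_n$ the conformal map onto the upper half-plane with $F_n(0)=n^3 i$, so that $\dist_\chi(F_n(0),\partial U_n)\asymp n^{-3}$ and \eqref{eq:power-spher} holds; here $\hat\C\setminus U_n=[n^3,\infty]$ has spherical diameter $\asymp n^{-3}$, and for any fixed $r>0$ one has $\hat\C\setminus U_n\subset D_\chi(\zeta_n,r)$ for $n$ large. Then $V_n(r)\approx\{|z|>2/r\}\setminus[n^3,\infty)$, $\Gamma_n(r)\approx\{|z|=2/r\}$, and $F_n(0)=n^3i$ is at Euclidean distance $\asymp n^3$ from both boundary components, so $\omega(F_n(0),\Gamma_n(r),V_n(r))$ is bounded away from $0$ — it is nowhere near $C(r)\dist_\chi(F_n(0),\partial U_n)^{1/2}$. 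The theorem's conclusion nevertheless holds for this example (one can check directly that $\chi(F_n^*(e^{i\theta}),F_n(0))\to0$ for $\theta\neq0$), but the line-by-line transcription of the proof of Theorem~\ref{thm:general result} does not prove it. To repair the argument you would need a separate treatment of those indices~$n$ for which $\hat\C\setminus U_n$ falls entirely inside $D_\chi(\zeta_n,r)$, where a bound must be obtained without passing through $\omega(F_n(0),\Gamma_n(r),V_n(r))$; note that the paper imposes an extra hypothesis~\eqref{cond:normality} in Theorem~\ref{thmE-spher} precisely because of a difficulty of this kind in the Milloux--Schmidt step, which should have been a warning that it is not automatic here.
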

\begin{rems*}
\begin{itemize}
\item[1.]
Examples of the form \eqref{non-normal1}, with $C_n=n^3$ for instance, show that in the spherical setting the hypothesis \eqref{eq:power-spher} is not independent of $z_0\in U$, in contrast to the independence of condition \eqref{eq:power} in Theorem~C; see the first remark after Theorem~\ref{thm:general result}.
\item[2.]
A special case of Theorem~\ref{thm:fast-spher} can be found in \cite[remark after Theorem~1.1]{RS-boundaries}, where it is observed that if $U$ is a Baker domain of an entire function $f$ such that for some $z_0\in U$ and all sufficiently large $n$ we have
\begin{equation}\label{eq:geom series}
|f^{n+1}(z_0)|\ge k|f^n(z_0)|, \;\text{where } k>1,
\end{equation}
then, for almost all $\zeta\in\partial U$, we have
\[
f^n(\zeta)\to \infty \;\text{ as } n\to\infty.
\]
Indeed, since for a point $z\in\C$ we have that $\dist_{\chi}(z,\infty)\sim 1/|z|$, the assumption (\ref{eq:geom series}) implies that the series in (\ref{eq:power-spher}) is convergent by comparing it to a geometric series. Similarly, by observing for example that the $p$-series $\sum_{n=0}^{\infty} \frac{1}{n^p}<\infty$ for $p>1$, we obtain that the claim of Theorem~\ref{thm:fast-spher} holds whenever for some $z_0\in U$ and all sufficiently large $n$ we have $|f^n(z_0)|\geq n^{2p}$ for $p>1$, or analogously,
\begin{equation}\label{eq:p series}
|f^{n+1}(z_0)|\ge \left(1+\frac{1}{n}\right)^{2p}|f^n(z_0)|, \;\text{where } p>1.
\end{equation}

\end{itemize}
\end{rems*}
There is also a spherical version of Theorem~E in which we again need an extra hypothesis, one that implies normality.
\begin{thm}\label{thmE-spher}
Let $F_n:U \to U_n$ be a sequence of holomorphic maps between simply connected domains, each with a full radial extension, and suppose that there exists $R>0$ such that
\begin{equation}\label{cond:normality}
(\hat{\mathbb C}\setminus U_n) \cap\{z:|z|=R\}\ne \emptyset,\;\text{ for } n\in\N.
\end{equation}
If there exists $\zeta_0\in \partial U$ such that $\chi
(F_n(\zeta_0),\partial U_n) \to 0$ as $n \to \infty$ and
\[
L_0 := \{\zeta \in \partial U : \chi(F_n(\zeta),F_n(\zeta_0)) \to 0\;\; \text{as } n \to \infty\}
\]
has positive harmonic measure with respect to~$U$, then, for all $z\in U$, we have
\[
\chi(F_n(z),F_n(\zeta_0)) \to 0\;\; \text{as } n \to \infty.
\]
\end{thm}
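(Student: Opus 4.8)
The plan is to reduce, along suitable subsequences, to the Euclidean statement of Theorem~E. First observe that condition~\eqref{cond:normality} forces $(F_n)$ to be normal on $U$: since each $U_n$ is a simply connected proper subdomain of $\mathbb{C}$, the set $\hat{\mathbb{C}}\setminus U_n$ is a closed connected subset of $\hat{\mathbb{C}}$ containing $\infty$ and, by~\eqref{cond:normality}, a point of the circle $\{|z|=R\}$; being connected it must then also meet $\{|z|=2R\}$ and every fixed spherical neighbourhood of $\infty$, so each $F_n$ omits three values whose mutual spherical distances exceed a constant $c(R)>0$, and normality follows from Montel's theorem. (Normality cannot be dropped, as the examples preceding the theorem show.)

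Now fix $z_*\in U$; it suffices to prove $\chi(F_n(z_*),F_n(\zeta_0))\to 0$. If this fails there are $\delta>0$ and a subsequence $(n_k)$ with $\chi(F_{n_k}(z_*),F_{n_k}(\zeta_0))\ge\delta$; passing to a further subsequence, by compactness of $\hat{\mathbb{C}}$ together with normality, we may assume $F_{n_k}\to g$ locally uniformly in the spherical metric on $U$ and $F_{n_k}(\zeta_0)\to p\in\hat{\mathbb{C}}$. Choosing $\zeta_{n_k}\in\partial U_{n_k}$ with $\chi(F_{n_k}(\zeta_0),\zeta_{n_k})=\chi(F_{n_k}(\zeta_0),\partial U_{n_k})$, we also have $\zeta_{n_k}\to p$. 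I will show $\chi(F_{n_k}(z),p)\to 0$ for every $z\in U$, which contradicts the choice of $z_*$; the key point is that Theorem~E, once applicable, delivers the conclusion for all interior points at once.

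Suppose first $p\ne\infty$. Then for all large $k$ the points $F_{n_k}(\zeta_0)$, $\zeta_{n_k}$ and, for each fixed $\zeta\in L_0$, $F_{n_k}(\zeta)$, lie in a bounded Euclidean ball about $p$ on which $\chi$ and the Euclidean distance are comparable; hence $\operatorname{dist}(F_{n_k}(\zeta_0),\partial U_{n_k})\to 0$ and $L_0\subseteq\{\zeta\in\partial U:\ |F_{n_k}(\zeta)-F_{n_k}(\zeta_0)|\to 0\}$, so this last set has positive harmonic measure with respect to $U$. Theorem~E applied to the sequence $(F_{n_k})$ then shows that $L_0$ lies in its Denjoy--Wolff set, and in particular $|F_{n_k}(z)-F_{n_k}(\zeta_0)|\to 0$, that is, $\chi(F_{n_k}(z),p)\to 0$, for all $z\in U$. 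Suppose instead $p=\infty$. Pick $w_{n_k}\in\hat{\mathbb{C}}\setminus U_{n_k}$ with $|w_{n_k}|=R$, as supplied by~\eqref{cond:normality}, and post-compose with the M\"obius map $\mu_k(z)=1/(z-w_{n_k})$, setting $\widetilde U_{n_k}:=\mu_k(U_{n_k})$ and $\widetilde F_{n_k}:=\mu_k\circ F_{n_k}:U\to\widetilde U_{n_k}$. Then $\widetilde U_{n_k}$ is again a simply connected proper subdomain of $\mathbb{C}$ (since $w_{n_k}\notin U_{n_k}$, so $\infty\notin\widetilde U_{n_k}$), and $\widetilde F_{n_k}$ inherits a full radial extension. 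Because $F_{n_k}(\zeta_0),\zeta_{n_k}\to\infty$ while $|w_{n_k}|=R$ is fixed, the elementary comparison $|\mu_k(a)-\mu_k(b)|\asymp\chi(a,b)$ holds, with absolute constants, once $|a|,|b|$ are large; consequently $\operatorname{dist}(\widetilde F_{n_k}(\zeta_0),\partial\widetilde U_{n_k})\to 0$ and $L_0\subseteq\{\zeta:\ |\widetilde F_{n_k}(\zeta)-\widetilde F_{n_k}(\zeta_0)|\to 0\}$. Theorem~E applied to $(\widetilde F_{n_k})$ gives $|\widetilde F_{n_k}(z)-\widetilde F_{n_k}(\zeta_0)|\to 0$ for all $z\in U$; since $\widetilde F_{n_k}(\zeta_0)=\mu_k(F_{n_k}(\zeta_0))\to 0$, this forces $F_{n_k}(z)\to\infty$, that is, $\chi(F_{n_k}(z),p)\to 0$, for all $z\in U$. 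In both cases the desired contradiction is reached.

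The main obstacle is the bookkeeping around $\infty$ in the case $p=\infty$: one must verify that post-composing with $\mu_k$ turns the spherical hypotheses into exactly the Euclidean hypotheses of Theorem~E — that $\mu_k(U_{n_k})\subsetneq\mathbb{C}$, that $\mu_k\circ F_{n_k}$ has a full radial extension (the exceptional set where $F_{n_k}^{*}=w_{n_k}$ has harmonic measure zero by Theorem~\ref{Cor:expansion}), and that $\mu_k$ distorts spherical into Euclidean distances only by bounded factors in the relevant region. This is precisely where~\eqref{cond:normality} is used beyond guaranteeing normality: it provides, for every $n$, a point of $\hat{\mathbb{C}}\setminus U_n$ at the \emph{fixed} finite distance $R$ from the origin, which is what makes the charts $\mu_k$ usable with bounds independent of $k$. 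Everything else is a direct transcription of the proof of Theorem~E — L\"owner's lemma, the Brownian-motion/maximum-principle comparison, and the Milloux--Schmidt estimate — together with routine comparisons between the spherical and Euclidean metrics on bounded sets.
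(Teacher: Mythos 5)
Your proof is correct, and it takes a genuinely different route from the one sketched in the paper. The paper (whose details are deliberately omitted) proposes to re-run the proof of Theorem~E directly in the spherical metric, and uses~\eqref{cond:normality} at the Milloux--Schmidt step: when the spherical disc about $F_n(\zeta_0)$ contains~$\infty$, the continuum out to~$\infty$ in $\C\setminus U_n$ is replaced by a continuum reaching the circle $\{|z|=R\}$. You instead treat the Euclidean Theorem~E as a black box and reduce to it along subsequences: after extracting $F_{n_k}(\zeta_0)\to p\in\hat{\C}$, the case $p\ne\infty$ is handled by comparing spherical and Euclidean distances near~$p$, while for $p=\infty$ you post-compose with the M\"obius chart $\mu_k(z)=1/(z-w_{n_k})$, where~\eqref{cond:normality} supplies the pole $w_{n_k}$ at fixed distance~$R$ from the origin, so that the distortion constants are uniform in~$k$. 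Under the hood these are the same mechanism --- the image under $\mu_k^{-1}$ of the unbounded continuum in $\C\setminus\widetilde U_{n_k}$ that Theorem~E's proof constructs is precisely a continuum in $\hat{\C}\setminus U_{n_k}$ joining a point near $\zeta_{n_k}$ to $w_{n_k}\in\{|z|=R\}$, exactly as in the paper's description --- but your packaging avoids redoing the harmonic-measure and Milloux--Schmidt estimates in the spherical metric, which is a real simplification. Your reduction buys reusability of Theorem~E; the paper's direct route avoids the subsequence and compactness bookkeeping. Two small remarks: (i) the opening paragraph establishing normality is in fact not used anywhere in your argument --- you only need compactness of $\hat{\C}$ to extract $F_{n_k}(\zeta_0)\to p$, and the conclusion for all $z\in U$ comes from the Euclidean Theorem~E rather than from a spherical version of Theorem~A, so the role of~\eqref{cond:normality} in your proof is solely to anchor the charts $\mu_k$, exactly as you say at the end; (ii) you should also note explicitly that $\widetilde U_{n_k}\ne\C$ because $0=\mu_k(\infty)\notin\widetilde U_{n_k}$, and that the radial extension of $\mu_k\circ F_{n_k}$ at $\zeta$ equals $\mu_k(F_{n_k}^{*}(\zeta))$ whenever $F_{n_k}^{*}(\zeta)\ne w_{n_k}$ since $\mu_k$ is a homeomorphism of $\hat{\C}$ --- both are immediate, but worth recording.
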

The extra hypothesis \eqref{cond:normality} ensures that each point of $\partial U_n$, $n\in\N$, lies in a continuum that is exterior to $U_n$ and contains a point of $\{z:|z|\le R\}$. This continuum is required because replacing the Euclidean metric by the spherical metric means that the spherical discs needed in the proof sometimes contain $\infty$, and in this circumstance the condition \eqref{cond:normality} enables us to replace the continuum to $\infty$ by a continuum to $\{z:|z|=R\}$ in the final stage of the proof where we apply the Milloux--Schmidt inequality; we omit the details.

Note that the condition \eqref{cond:normality} implies that $(F_n)$ forms a normal family by using a form of Montel's theorem due to Carath\'eodory and Landau; see \cite[p.~202]{Caratheodory}.

Finally, Theorem~F holds with no modifications when the Euclidean metric is replaced by the spherical metric, since the deduction of part~(b) of Theorem~\ref{thm:Pomgen} from part~(a) is essentially unchanged in the spherical setting.

\begin{rem*}
As this paper was being completed we learnt of very interesting recent work by Mart\'i-Pete, Rempe and Waterman in~\cite{MRW}. Of relevance to our current paper, ~\cite[Theorem~1.10]{MRW} concerns the dynamical behaviour of boundary points of a wandering domain~$U$ of an entire function. They prove that the set of points $z\in \partial U$ such that $\limsup_{n\to\infty}\chi(f^n(z),f^n(z_0))> 0$, where $z_0\in U$, which they call {\it maverick points}, forms a set of harmonic measure zero. In the setting of wandering domains and the spherical metric, their result is stronger than the results given in this section, though the results in this section hold in our more general setting.  Our results in earlier sections capture the different possible behaviours of orbits of boundary points tending to infinity.
\end{rem*}

\section{Open questions}\label{questions}
In this final section we discuss several interesting questions, which arise in connection with our new results. The first relates to a possible generalisation of the ADM dichotomy, part~(b), discussed in Section~\ref{ADMproof}. Recall that the Denjoy--Wolff set is only defined if interior orbits converge to the boundary, and that it follows from Theorem~A that, in this case, all interior orbits have the same limiting behaviour. For simplicity, the question is stated for inner functions; in a more general version, the geometry of the domain's boundary would play a role.
\begin{qn}\label{qn1}
(a) Must the Denjoy--Wolff set have zero measure for any contracting forward composition of inner functions $F_n=f_n\circ \cdots \circ f_1$, $n\in\N$, such that interior orbits converge to the boundary sufficiently slowly that $\sum_{n\in\N} (1-|F_n(0)|)$ is divergent?

(b) More generally, we can ask this question for contracting sequences $(F_n)$ of inner functions such that interior orbits converge to the boundary sufficiently slowly that this series is divergent.
\end{qn}

Including the contracting hypothesis in Question~\ref{qn1} is in some sense natural since for iteration of a single inner function the divergence condition implies the contracting property; see the discussion in Section~\ref{ADMproof}.

If the answer to Question~\ref{qn1}, part~(b) does turn out to be `yes', this would imply that a version of Theorem~F holds for sequences of inner functions, not just forward compositions of them, since if the sum is convergent we can apply Theorem~B, which gives full measure for the Denjoy--Wolff set.

Next we have a couple of questions relating specifically to our examples. In connection with Theorem~D, concerning a sequence of M\"obius self-maps of $\D$ with empty Denjoy--Wolff set, the following question arises.

\begin{qn}\label{qn2}
Does there exist an example of a forward composition of non-M\"obius self-maps of $\D$ for which interior orbits converge to the boundary and the Denjoy--Wolff set is empty? Similarly, does there exist an orbit of wandering domains, univalent or not, for which the interior orbits converge to the boundary and the Denjoy--Wolff set is empty?
\end{qn}

Another of our examples suggests a further question.
\begin{qn}\label{qn3}
In Example~\ref{Bn-to-1}, in which an entire boundary neighbourhood of~1 converges to~1, can we deduce that the set of boundary points which converge to~1 is dense in $\partial \D$ or even has full measure there?
\end{qn}

Finally, we learnt the following  related and interesting question from Marco Abate: it is known that there is no straightforward generalisation of the Denjoy--Wolff theorem from $\D$ to a general simply connected domain $U$; see \cite[Problem~5-a]{milnor06} for an example of a comb domain with non-locally connected boundary within which orbits of a univalent self-map accumulate at a continuum in the boundary.
\begin{qn}\label{qn6}
Let~$f$ be a holomorphic self-map of a simply connected domain $U$. What non-trivial conditions on~$f$ and~$U$ are sufficient to ensure that a Denjoy--Wolff point exists in~$\ov{U}$ for~$f$?
\end{qn}
First note that our Theorem~A implies that if $f$ is a holomorphic self-map of a simply connected domain~$U$ and one orbit converges to a point $p\in\partial U$, then all others must do the same and hence~$p$ is the unique Denjoy--Wolff point.

A trivial sufficient condition for Question~\ref{qn6} is that~$U$ is a Jordan domain, and one might conjecture that if~$f$ has a full radial extension to $\partial U$, possibly mapping $\partial U$ to itself almost everywhere, then this property would also be sufficient. However, it is straightforward to modify the example in \cite[Problem~5-a]{milnor06}, replacing the teeth of the comb by thin triangles, to ensure that the univalent self-map has a full radial extension. Moreover, the univalent self-map of such a domain~$U$ can be replaced by a self-map of the domain that is conjugate via a Riemann map to a degree 2 hyperbolic Blaschke product with an attracting fixed point on the boundary, and within~$U$ orbits under this self-map also accumulate at a continuum.

\begin{rem*}
There are versions of the Denjoy--Wolff theorem for self-maps of hyperbolic Riemann surfaces in which the boundary limit set is in general a single point or a continuum; see \cite{Heins1} and \cite{Heins2}. See also \cite{Short-Christ} for results concerning the stability of the Denjoy--Wolff theorem under small perturbations of a holomorphic self-map of $\D$ and \cite{Abate-Chr} for generalisations of such results to self-maps of hyperbolic Riemann surfaces.
\end{rem*}

\vspace*{10pt}

\bibliography{Wandering2}

\hspace{-0.3cm}{\em Emails}:\\ambenini@gmail.com,
 vasiliki.evdoridou@open.ac.uk,  nfagella@ub.edu, phil.rippon@open.ac.uk, gwyneth.stallard@open.ac.uk.

\hspace{-0.3cm}{\em ORCID iDs}:\\
Anna Miriam Benini: 0000-0001-9014-5235\\
Vasiliki Evdoridou: 0000-0002-5409-2663\\
N\'uria Fagella: 0000-0002-5466-0579\\
Phil Rippon: 0000-0003-4844-1190\\
Gwyneth Stallard: 0000-0001-7621-2219

\end{document}